\newtheorem{Theorem}{Theorem}[section]
\newtheorem{thm}[Theorem]{Theorem}
\newtheorem{corl}[Theorem]{Corollary}
\newtheorem{lm}[Theorem]{Lemma}
\newtheorem{defn}[Theorem]{Definition}
\newtheorem{prop}[Theorem]{Proposition}
\newtheorem{rmk}[Theorem]{Remark}
\title{A restriction problem for mod-$p$ representations of $\mathrm{SL}_2(F)$}
\date{}
\author{Arpan Das}
\address{Harish-Chandra Research Institute, A CI of Homi Bhabha National
Institute, Chhatnag Road, Jhusi, Prayagraj - 211019, India}
\email{arpan3141@gmail.com}
\thanks{The author is supported by Ph.D. scholarship from Harish-Chandra Research Institute, A CI of Homi Bhabha National
Institute}
\subjclass[2020]{Primary 22E50; Secondary 11F70.}
\begin{document}

\begin{abstract}
    Let $p$ be a prime and $F$ a non-archimedean local field of residue characteristic $p$. In this paper, we study the restriction of smooth irreducible $\bar{\mathbb{F}}_p$-representations of $\mathrm{SL}_2(F)$ to its Borel subgroup. In essence, we show that the action of $\mathrm{SL}_2(F)$ on its irreducibles is controlled by the action of the Borel subgroup. The results of this paper constitute the $\mathrm{SL}_2$-analogue of a work of Paškūnas\cite{PaskunasRestriction}.
\end{abstract}

\maketitle

\section{Introduction}
\subsection{Background}
Let $p$ be a prime number and $F$ a non-archimedean local field of residue characteristic $p$. We also fix an algebraically closed field $\bar{\mathbb{F}}_p$ of characteristic $p$. The subject of mod $p$ representations of $p$-adic groups began with the seminal work of Barthel and Livne \cite{Barthel} in 1994. A key classification result they proved constitutes subdividing the smooth irreducible $\bar{\mathbb{F}}_p$-representations (with central character) of $\mathrm{GL}_2(F)$ into four classes. These are called characters, principal series, Steinberg representations (a.k.a. special series), and supersingular representations. Although the first three types of representation are studied well beyond $\mathrm{GL}_2$, the last type, i.e. supersingulars, are rather mysterious. But when $F=\mathbb{Q}_p$, Breuil\cite{BreuilGL2Qp1} gave a complete classification of the supersingulars of $\mathrm{GL}_2(\mathbb{Q}_p)$ with explicit models for these representations. A similar classification of supersingulars for $\mathrm{SL}_2(\mathbb{Q}_p)$ was also obtained in \cite{Abdellatif} and \cite{Cheng}.

In 2007, Paškūnas proved the following theorem which showed that in a certain sense the action of $\mathrm{GL_2}(F)$ on its mod $p$ irreducible representations is controlled by the action of its Borel subgroup $B$. An interesting feature of this result of Paškūnas is that although it is a significant non-trivial result about the mysterious supersingulars, the proof does not use anything other than the very definition of supersingulars and some natural technical results about the way the supersingulars are parameterized.

\begin{thm}[Theorem 1.1 in \cite{PaskunasRestriction}]\label{Paskunas' main result}
    Let $\pi$ and $\pi^{\prime}$ be smooth representations of $\mathrm{GL}_2(F)$ over $\bar{\mathbb{F}}_p$, and $B$ denote the Borel subgroup of upper triangular matrices in $\mathrm{GL}_2(F)$. Suppose $\pi$ is irreducible with a central character. Then the following hold : 
    \begin{enumerate}
        \item If $\pi$ is a principal series representation then $\pi|_{B}$ is of length $2$; otherwise $\pi|_{B}$ is irreducible.

        \item If $\pi\neq \mathrm{St},$ the Steinberg representation, then $\mathrm{Hom}_{B}(\pi,\pi^{\prime})\cong \mathrm{Hom}_{\mathrm{GL_2}(F)}(\pi,\pi^{\prime})$; otherwise we have $\mathrm{Hom}_{B}(\mathrm{St},\pi^{\prime})\cong \mathrm{Hom}_{\mathrm{GL_2}(F)}(\mathrm{Ind}_{B}^{\mathrm{GL_2}(F)}(1),\pi^{\prime})$. 
    \end{enumerate}
\end{thm}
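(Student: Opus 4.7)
The plan is to leverage the Barthel--Livne classification of smooth irreducible $\bar{\mathbb{F}}_p$-representations of $G = \mathrm{GL}_2(F)$ with central character into four types (characters, principal series, Steinberg, supersingulars), and to analyse each type by combining the Bruhat decomposition $G = B \sqcup BwB$ with Frobenius reciprocity $\mathrm{Hom}_B(\pi|_B, \chi) \cong \mathrm{Hom}_G(\pi, \mathrm{Ind}_B^G \chi)$ for a smooth character $\chi$ of $B$. The observation is that any $B$-character quotient of $\pi|_B$ produces an embedding $\pi \hookrightarrow \mathrm{Ind}_B^G \chi$, which for irreducible $\pi$ ties the existence of $B$-quotients directly to the classification.

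For Part~(1) I would argue case by case. Characters of $G$ restrict to characters of $B$, hence are irreducible. For a principal series $\pi = \mathrm{Ind}_B^G \chi$, the support filtration produces the $B$-equivariant sequence $0 \to \pi^\circ \to \pi|_B \to \chi \to 0$, where $\pi^\circ$, the subspace of functions supported on the big cell, can be identified via Mackey with $c\text{-}\mathrm{Ind}_T^B \chi^w$; a direct irreducibility check (using the regular $U$-representation and the affine $T$-action on $U$) shows $\pi^\circ$ is simple and non-isomorphic to $\chi$, yielding length two. For $\pi = \mathrm{St}$, the defining sequence $0 \to \mathbf{1} \to \mathrm{Ind}_B^G \mathbf{1} \to \mathrm{St} \to 0$ restricts to $B$, and the trivial $G$-subrepresentation (spanned by the constant function) furnishes a $B$-complement to $\pi^\circ$ inside $\mathrm{Ind}_B^G \mathbf{1}|_B$, identifying $\mathrm{St}|_B$ with the irreducible $\pi^\circ$ for $\chi = \mathbf{1}$. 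Finally, for $\pi$ supersingular I would argue by contradiction: a proper $B$-subrepresentation of $\pi$ would, by Paškūnas' analysis of the universal model $c\text{-}\mathrm{Ind}_{KZ}^G \sigma / T$, yield a $B$-character quotient of $\pi|_B$ and hence an embedding $\pi \hookrightarrow \mathrm{Ind}_B^G \chi$, contradicting supersingularity.

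For Part~(2), given $\phi \in \mathrm{Hom}_B(\pi,\pi')$, I would form the graph $\Gamma(\phi) \subset \pi \oplus \pi'$ and the $G$-subrepresentation $\tilde\pi$ it generates; setting $N := \tilde\pi \cap (0 \oplus \pi')$ produces a $G$-extension $0 \to N \to \tilde\pi \to \pi \to 0$ split as a $B$-sequence via $\Gamma(\phi)$. If I can force $N = 0$ whenever $\pi \neq \mathrm{St}$, then the first projection becomes an isomorphism and the second projection yields a $G$-map agreeing with $\phi$ on the $B$-generating subrepresentation $\Gamma(\phi)$, hence equal to $\phi$ by $G$-irreducibility of $\pi$. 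The vanishing of $N$ is deduced from Part~(1): for characters and supersingulars, irreducibility of $\pi|_B$ combined with the classification rules out $B$-split nontrivial $G$-extensions of $\pi$ by any nonzero smooth $G$-representation, and for principal series the same argument works using the length-two description. The Steinberg exception reflects precisely that $0 \to \mathbf{1} \to \mathrm{Ind}_B^G \mathbf{1} \to \mathrm{St} \to 0$ is itself a nontrivial $B$-split $G$-extension; replacing $\mathrm{St}$ by $\mathrm{Ind}_B^G \mathbf{1}$ on the source absorbs this extension and yields the stated formula $\mathrm{Hom}_B(\mathrm{St}, \pi') \cong \mathrm{Hom}_G(\mathrm{Ind}_B^G \mathbf{1}, \pi')$.

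The main obstacle is the supersingular case of Part~(1). The subtlety is that simple smooth $B$-representations over $\bar{\mathbb{F}}_p$ need not be characters --- for instance $\pi^\circ$ itself is a simple smooth $B$-representation with no character quotient --- so the purely formal Frobenius argument does not reduce an arbitrary proper $B$-subrepresentation to an embedding into a principal series. The key technical input, which I would need to import from Barthel--Livne and develop in the style of Paškūnas, is a concrete description of the $B$-structure of the universal cover $c\text{-}\mathrm{Ind}_{KZ}^G \sigma / T$ sufficient to show that every proper $B$-subrepresentation of a supersingular $\pi$ does in fact yield a $B$-character quotient. This is the only genuinely non-formal step; everything else in Part~(2) follows once Part~(1) is in hand.
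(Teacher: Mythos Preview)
First, a framing remark: this theorem is not proved in the present paper. It is quoted from Pa\v{s}k\=unas' 2007 paper as motivation, and the body of the paper establishes the $\mathrm{SL}_2$-analogue (Theorems~\ref{main theorem 2}--\ref{main thm 4}). So the relevant comparison is between your outline and the method the paper uses for $\mathrm{SL}_2$, which follows Pa\v{s}k\=unas' original argument for $\mathrm{GL}_2$ closely.

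That method is quite different from yours and is essentially computational rather than formal. For the supersingular case of Part~(1), the paper does not argue by contradiction via character quotients. Instead it introduces the operator $\mathcal{S}$ (the ``upper-triangular half'' of the spherical Hecke operator; see \S\ref{spherical Hecke alg structure}) and proves: (a) every nonzero $B$-submodule of $\pi$ contains a nonzero $I(1)$-invariant vector $v$ (Proposition~\ref{B_S module generated by a non-zero vector has a I_S(1) invariant}); (b) supersingularity forces $\mathcal{S}^k v=0$ for some $k$ (Corollary~\ref{I_S(1) invariants are killed by large powers of S}); (c) once $\mathcal{S}v=0$, an explicit matrix identity shows $w_0 v$ is a finite $B$-linear combination of translates of $v$ (Lemma~\ref{for Sv=0 action on v is controlled by B_S}). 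Combined with the Bruhat decomposition $G=BI(1)\sqcup Bw_0 I(1)$, this gives $\bar{\mathbb{F}}_p[G]\cdot v=\bar{\mathbb{F}}_p[B]\cdot v$, hence $B$-irreducibility. There is no appeal to the universal model $c\text{-}\mathrm{Ind}_{KZ}^G\sigma/T$ beyond the definition of supersingularity, and no attempt to produce a $B$-character quotient. The obstacle you identify (that simple smooth $B$-representations need not be characters) is precisely why Pa\v{s}k\=unas does not take your route.

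Your Part~(2) graph argument also has a gap that you have not flagged. Reducing the $\mathrm{Hom}$-isomorphism to the vanishing of $B$-split, $G$-nonsplit extensions of $\pi$ by arbitrary smooth $N$ is correct, but your claim that ``irreducibility of $\pi|_B$ combined with the classification rules out'' such extensions is unjustified. Knowing $E|_B\cong \pi|_B\oplus N|_B$ with $\pi|_B$ simple does not force the $B$-complement to be $G$-stable; indeed, the existence of such extensions is exactly what you are trying to disprove, so the argument is close to circular unless you supply an independent input. The paper (following Pa\v{s}k\=unas) instead proves Part~(2) directly: given a $B$-map $\varphi$, one manufactures a vector $v'\in\pi^{I(1)}$ with $\mathcal{S}v'=0$ such that $\varphi(v')$ is also $I(1)$-invariant with $\mathcal{S}\varphi(v')=0$, and then the explicit formula of Lemma~\ref{for Sv=0 action on v is controlled by B_S} applied to both $v'$ and $\varphi(v')$ yields $\varphi(w_0v')=w_0\varphi(v')$, whence $\varphi$ is $G$-linear (see Theorem~\ref{G_S maps are B_S maps from admissible supersingulars}). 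The non-supersingular cases are handled by a parallel computation inside the principal series (Theorem~\ref{lifting B_S intertwiners from V_eta to G_S intertwiners from Ind_BS^GS}).

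In short: your outline is a plausible abstract strategy, but both the supersingular irreducibility and the $\mathrm{Hom}$-isomorphism hinge on steps you have not supplied, and Pa\v{s}k\=unas' actual proof bypasses these difficulties via the concrete operator $\mathcal{S}$ and the identity in Lemma~\ref{for Sv=0 action on v is controlled by B_S}.
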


When $F=\mathbb{Q}_p$ and $\pi^{\prime}$ is irreducible as well, Berger \cite{Berger} proved the above theorem using the arithmetic of $(\varphi,\Gamma)$-modules, and also the explicit models for the supersingulars of $\mathrm{GL}_2(\mathbb{Q}_p)$ (see \cite{BreuilGL2Qp1}). The restriction of mod $p$ representations to Borel subgroup plays a crucial role in the work of Colmez \cite{Colmez} on $p$-adic representations of $\mathrm{GL}_2(\mathbb{Q}_p)$. The theorem of Paškūnas generalized Berger's result by using only representation-theoretic methods.

\subsection{Our results}

In the present paper, we consider the $\mathrm{SL}_2$-analogue of Theorem \ref{Paskunas' main result}. In particular, our main result for non-supersingular representations is the following theorem.
\begin{thm}[Theorem \ref{lifting B_S intertwiners from V_eta to G_S intertwiners from Ind_BS^GS}]\label{main theorem 2}
    Let $G_S:=\mathrm{SL_2}(F)$, and $B_S$ its Borel subgroup. Let $\eta$ be a smooth $\bar{\mathbb{F}}_p$-character of $B_S$.  Given a smooth $\bar{\mathbb{F}}_p$-representation $\pi$ of $G_S$, the restriction map induces an isomorphism between the following spaces of intertwiners : $$\mathrm{Hom}_{G_S}(\mathrm{Ind}_{B_S}^{G_S}(\eta),\pi)\cong \mathrm{Hom}_{B_S}(V_{\eta},\pi|_{B_S}).$$ Here, $V_{\eta}$ is the kernel of the map $\mathrm{Ind}_{B_S}^{G_S}(\eta)\to \eta$ that evaluates every function at the identity matrix.
\end{thm}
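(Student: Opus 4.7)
The strategy is to establish injectivity and surjectivity of the restriction map $\Phi \mapsto \Phi|_{V_\eta}$ separately.

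\emph{Injectivity.} It suffices to show that $V_\eta$ generates $I := \mathrm{Ind}_{B_S}^{G_S}(\eta)$ as a $G_S$-representation. Let $W \subseteq I$ denote the $G_S$-subrepresentation generated by $V_\eta$. Since $I/V_\eta \cong \eta$ as $B_S$-representations, the $G_S$-quotient $I/W$ is a $B_S$-quotient of the one-dimensional $\eta$, hence either zero or isomorphic to $\eta$. In the latter case $\eta$ extends to a smooth $G_S$-character; since $\mathrm{SL}_2(F)$ is perfect, this forces $\eta = 1$, and the quotient map $I \to I/W$ must then be a scalar multiple of the evaluation map $\mathrm{ev}_e: I \to \bar{\mathbb{F}}_p$. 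Choosing any $f \in V_\eta$ with $f(w) \neq 0$, where $w := \left(\begin{smallmatrix} 0 & -1 \\ 1 & 0 \end{smallmatrix}\right)$, yields $(w \cdot f)(e) = f(w) \neq 0 = f(e)$, contradicting the $G_S$-equivariance of $\mathrm{ev}_e$; hence $W = I$.

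\emph{Surjectivity.} Given $\psi \in \mathrm{Hom}_{B_S}(V_\eta, \pi|_{B_S})$, I aim to construct $\Phi \in \mathrm{Hom}_{G_S}(I, \pi)$ with $\Phi|_{V_\eta} = \psi$. Choose a smooth function $f_0 \in I$ with $f_0(e) = 1$ and $f_0(w) = 0$; such $f_0$ exists because $[e]$ and $[w]$ are distinct points of the compact space $B_S \backslash G_S \cong \mathbb{P}^1(F)$. Every $f \in I$ decomposes uniquely as $f = f(e) f_0 + (f - f(e) f_0)$ with $f - f(e) f_0 \in V_\eta$, so any candidate $\Phi$ is determined by the single vector $v := \Phi(f_0) \in \pi$ via the formula $\Phi(f) := f(e) \cdot v + \psi(f - f(e) f_0)$. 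Its $G_S$-equivariance is equivalent to the system
\[
\psi(g \cdot f_0 - f_0(g) f_0) = (g - f_0(g)) v \quad \text{in } \pi, \qquad g \in G_S,
\]
and since $(w \cdot f_0)(e) = f_0(w) = 0$, the $g = w$ instance forces the natural choice $v := w^{-1} \psi(w \cdot f_0)$.

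The principal obstacle is verifying that this $v$ satisfies the full system. Using the Bruhat decomposition $G_S = B_S \sqcup B_S w B_S$ and the $B_S$-equivariance of $\psi$, the remaining equations reduce essentially to the case $g = b \in B_S$, which unfolds to the identity
\[
\psi(b f_0 - \eta(b) f_0) = w^{-1}\bigl((w b w^{-1}) - \eta(b)\bigr) \psi(w f_0).
\]
Since $w b w^{-1}$ lies in the opposite Borel $w B_S w^{-1}$ rather than in $B_S$, this identity cannot be derived from the $B_S$-equivariance of $\psi$ alone; it must instead be extracted by combining that equivariance with the explicit realization of $V_\eta$ as smooth compactly supported functions $C_c^\infty(F, \bar{\mathbb{F}}_p)$ on the big Bruhat cell (parameterized via $\tilde f(x) := f(w n(x))$, under which $B_S$ acts by translations and $\chi^{-1}$-twisted dilations, with $\chi(a) := \eta(\mathrm{diag}(a, a^{-1}))$), together with the explicit multiplication formulas for $b \cdot w n(x)$ in $G_S$. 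This case analysis, together with the verification that the resulting $v$ is independent of the auxiliary choice of $f_0$, constitutes the technical heart of the proof, in the spirit of Paškūnas's strategy for $\mathrm{GL}_2$.
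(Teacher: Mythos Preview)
Your injectivity argument is essentially correct (the point is that $V_\eta$ is not $G_S$-stable, so the $G_S$-span of $V_\eta$ must be all of $I$), but your surjectivity argument has a genuine gap at exactly the place you flag as ``the technical heart of the proof.''

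You correctly observe that verifying $G_S$-equivariance of your candidate $\Phi$ reduces to an identity of the form
\[
\psi(b f_0 - \eta(b) f_0) \;=\; b\cdot v - \eta(b)\, v,\qquad v := w^{-1}\psi(w f_0),\; b\in B_S,
\]
and that the right-hand side involves the action of $bw^{-1}\notin B_S$ (equivalently, of $wbw^{-1}$ lying in the opposite Borel) on a vector of $\pi$. This action is \emph{not} determined by $\psi$ being $B_S$-equivariant: you only control how $B_S$ moves $\psi(w f_0)$, not how $\bar U_S$ does. Your proposed remedy---the explicit realization of $V_\eta$ as $C_c^\infty(F,\bar{\mathbb{F}}_p)$ and the multiplication formulas for $b\cdot w n(x)$---only rewrites elements of $V_\eta$ and identities \emph{inside} $V_\eta$; it still gives no handle on how $\bar U_S$ acts on the image vector $\psi(w f_0)\in\pi$. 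So the case analysis you defer is not a routine bookkeeping exercise: it is the entire content of the theorem, and the tools you have set up are insufficient to carry it out. (Your attribution ``in the spirit of Pa\v{s}k\=unas's strategy'' is also off: Pa\v{s}k\=unas does not verify $G$-equivariance of an explicit formula either.)

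The paper's approach supplies precisely the missing ingredient. It singles out the specific $I_S(1)$-fixed vector $\ell_{2,\eta}\in V_\eta$, on which the operator $\mathcal{S}$ acts by the scalar $\eta(\alpha_0)$, and then proves by an iterated $\mathcal{S}$-computation that $\psi(\ell_{2,\eta})$ is itself $I_S(1)$-invariant---in particular $\bar U_S(\mathfrak{p}_F)$-invariant, which is exactly the control over the opposite unipotent that your approach lacks. Once that is known, $\bar{\mathbb{F}}_p[K_0]\cdot\psi(\ell_{2,\eta})$ is a non-trivial weight $\sigma$ of $K_0$, and Frobenius reciprocity together with Abdellatif's isomorphism $\mathrm{ind}_{K_0}^{G_S}(\sigma)/(\tau_\sigma-\eta(\alpha_0))\cong \mathrm{Ind}_{B_S}^{G_S}(\eta)$ produces a $G_S$-map restricting to $\psi$. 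In other words, the lift $\Phi$ is not built by guessing a value for $\Phi(f_0)$ and checking relations, but by showing that $\psi$ forces enough $K_0$-structure on its image to invoke compact induction.
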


Then, as a consequence of the above result, we prove the following theorem, which is similar to part (2) of Theorem \ref{Paskunas' main result}.

\begin{thm}[Corollary \ref{intertwiners from principal series}, Corollary \ref{intertwiners from steinberg}]\label{main thm 3}
    Let $\pi$ be a smooth representation of $G_S$ over $\bar{\mathbb{F}}_p$. If $\eta\neq 1$ is a smooth $\bar{\mathbb{F}}_p$-character of $B_S$, then $$\mathrm{Hom}_{B_S}(\mathrm{Ind}_{B_S}^{G_S}(\eta),\pi^{\prime})\cong \mathrm{Hom}_{G_S}(\mathrm{Ind}_{B_S}^{G_S}(\eta),\pi^{\prime});$$ Otherwise we have the following isomorphism $$\mathrm{Hom}_{B_S}(\mathrm{St}_S|_{B_S},\pi)\cong \mathrm{Hom}_{G_S}(\mathrm{Ind}_{B_S}^{G_S}(1),\pi),$$ where $\mathrm{St}_S:=\frac{\mathrm{Ind}_{B_S}^{G_S}(1)}{1}$ is the mod $p$ Steinberg representation. The latter isomorphism cannot be improved by replacing $\mathrm{St}_S$ with $\mathrm{Ind}_{B_S}^{G_S}(1)$.
\end{thm}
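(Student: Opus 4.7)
My plan is to deduce both parts of the theorem from Theorem~\ref{main theorem 2}, via a careful analysis of the $B_S$-module structure of $\mathrm{Ind}_{B_S}^{G_S}(\eta)$ built around the canonical short exact sequence of $B_S$-modules
\[ 0 \longrightarrow V_\eta \longrightarrow \mathrm{Ind}_{B_S}^{G_S}(\eta) \longrightarrow \eta \longrightarrow 0, \]
whose quotient is evaluation at the identity.

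For $\eta = 1$, I would first observe that this sequence splits as $B_S$-modules: the constant functions form a $G_S$-subrepresentation of $\mathrm{Ind}_{B_S}^{G_S}(1)$ isomorphic to the trivial character $\mathbf{1}$, and the decomposition $f = f(e) + (f - f(e))$ exhibits $\mathrm{Ind}_{B_S}^{G_S}(1)|_{B_S} \cong \mathbf{1} \oplus V_1$. Since $\mathrm{St}_S$ is by definition the quotient of $\mathrm{Ind}_{B_S}^{G_S}(1)$ by these constants, we obtain $\mathrm{St}_S|_{B_S} \cong V_1$, and Theorem~\ref{main theorem 2} immediately yields
\[ \mathrm{Hom}_{B_S}(\mathrm{St}_S|_{B_S}, \pi) \cong \mathrm{Hom}_{B_S}(V_1, \pi|_{B_S}) \cong \mathrm{Hom}_{G_S}(\mathrm{Ind}_{B_S}^{G_S}(1), \pi). \]
The non-improvability assertion follows from the same splitting, which gives $\mathrm{Hom}_{B_S}(\mathrm{Ind}_{B_S}^{G_S}(1), \pi) \cong \pi^{B_S} \oplus \mathrm{Hom}_{G_S}(\mathrm{Ind}_{B_S}^{G_S}(1), \pi)$: already for $\pi = \mathbf{1}$ the extra summand $\pi^{B_S} = \bar{\mathbb{F}}_p$ is nonzero, while $\mathrm{Hom}_{G_S}(\mathrm{Ind}_{B_S}^{G_S}(1), \mathbf{1}) = 0$ since $\mathbf{1}$ sits as a subrepresentation of $\mathrm{Ind}_{B_S}^{G_S}(1)$ rather than a quotient, its only irreducible quotient being $\mathrm{St}_S$.

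For $\eta \neq 1$, I would apply $\mathrm{Hom}_{B_S}(-, \pi')$ to the same short exact sequence and compare with Theorem~\ref{main theorem 2}. The composition of the restriction map $r \colon \mathrm{Hom}_{G_S}(\mathrm{Ind}_{B_S}^{G_S}(\eta), \pi') \to \mathrm{Hom}_{B_S}(\mathrm{Ind}_{B_S}^{G_S}(\eta), \pi')$ with the pullback to $\mathrm{Hom}_{B_S}(V_\eta, \pi')$ is precisely the isomorphism of Theorem~\ref{main theorem 2}; a short diagram chase then yields the direct sum decomposition
\[ \mathrm{Hom}_{B_S}(\mathrm{Ind}_{B_S}^{G_S}(\eta), \pi') \cong \mathrm{Hom}_{G_S}(\mathrm{Ind}_{B_S}^{G_S}(\eta), \pi') \oplus \mathrm{Hom}_{B_S}(\eta, \pi'), \]
so the desired isomorphism is equivalent to the vanishing $\mathrm{Hom}_{B_S}(\eta, \pi') = 0$ for every smooth $G_S$-representation $\pi'$ and every nontrivial smooth character $\eta$ of $B_S$.

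This vanishing is the main obstacle of the proof. A nonzero element would produce a vector $v \in (\pi')^{N_S}$ on which $T_S$ acts by the nontrivial character $\eta$, so the statement reduces to showing that no nontrivial smooth $T_S$-character embeds into $(\pi')^{N_S}$ for any smooth $G_S$-representation $\pi'$. I would attack this by reducing to the cyclic $G_S$-subrepresentation generated by $v$ and then invoking the classification of smooth irreducible $\bar{\mathbb{F}}_p$-representations of $\mathrm{SL}_2(F)$ from~\cite{Abdellatif} and~\cite{Cheng}: for a principal series $\mathrm{Ind}_{B_S}^{G_S}(\chi)$, a direct computation with the Bruhat decomposition and smoothness at the boundary of the big cell forces $(\pi')^{N_S}$ to be either $0$ (when $\chi \neq 1$) or the constants (when $\chi = 1$), giving only the trivial $T_S$-character; the Steinberg case reduces via the sequence $0 \to \mathbf{1} \to \mathrm{Ind}_{B_S}^{G_S}(1) \to \mathrm{St}_S \to 0$ to analyzing the $T_S$-equivariant embedding $\mathrm{St}_S^{N_S} \hookrightarrow H^1(N_S, \mathbf{1})$; and the supersingular case is handled via the explicit models of Abdellatif and Cheng.
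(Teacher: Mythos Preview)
Your treatment of the case $\eta=1$ is correct and essentially identical to the paper's: you identify $\mathrm{St}_S|_{B_S}\cong V_1$ and apply Theorem~\ref{main theorem 2}. Your counterexample $\pi=\mathbf{1}$ for non-improvability also works (the paper instead takes $\pi=\mathrm{Ind}_{B_S}^{G_S}(1)$).

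For $\eta\neq 1$, your reduction is actually cleaner than the paper's argument (which goes through the auxiliary Corollaries~\ref{intertwiner from principal series is injective} and~\ref{endomorphisms of principal series is one dimensional}). Your diagram chase is valid: Theorem~\ref{main theorem 2} shows that the restriction map $\mathrm{Hom}_{B_S}(\mathrm{Ind}_{B_S}^{G_S}(\eta),\pi')\to \mathrm{Hom}_{B_S}(V_\eta,\pi')$ is surjective and split by $r$, so $\mathrm{Hom}_{B_S}(\mathrm{Ind}_{B_S}^{G_S}(\eta),\pi')\cong r(\mathrm{Hom}_{G_S})\oplus \mathrm{Hom}_{B_S}(\eta,\pi')$, and the claim reduces to $\mathrm{Hom}_{B_S}(\eta,\pi')=0$.

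The gap is in your proposed proof of this vanishing. You suggest reducing to the cyclic $G_S$-subrepresentation generated by $v$ and invoking the classification of irreducibles. This fails on two counts. First, $\pi'$ is an \emph{arbitrary} smooth representation: the cyclic subrepresentation $\bar{\mathbb{F}}_p[G_S]\cdot v$ need not be irreducible, and passing to an irreducible subquotient may lose the vector $v$ you care about. Second, even for irreducibles there is no classification of supersingulars of $\mathrm{SL}_2(F)$ for general $F$; the explicit models of Abdellatif and Cheng you invoke exist only for $F=\mathbb{Q}_p$.

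The vanishing you need is in fact elementary and is Lemma~\ref{only char of sl2 is trivial} in the paper: if $\bar{\mathbb{F}}_p v$ is a $B_S$-stable line in a smooth $G_S$-representation, then by smoothness $v$ is fixed by $\bar U_S(\mathfrak{p}_F^m)$ for some $m$, and conjugating by powers of $\alpha_0$ shows $v$ is fixed by all of $\bar U_S$; since $U_S$ already acts trivially (it is the derived subgroup of $B_S$) and $U_S,\bar U_S$ generate $G_S$, the whole of $G_S$ fixes $v$. In particular $B_S$ acts trivially on $v$, so $\eta=1$. Replacing your classification argument by this two-line lemma makes your proof complete, and arguably more direct than the paper's own route through Corollaries~\ref{intertwiner from principal series is injective} and~\ref{endomorphisms of principal series is one dimensional}.
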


We let $K_0$ and $K_1$ denote the maximal compact subgroups of $\mathrm{SL}_2(F)$. Then, our main result for the supersingular representations of $\mathrm{SL}_2(F)$ is the following. 

\begin{thm}[Theorem \ref{restriction of admissible supersingulars to borel is irreducible}, Theorem \ref{G_S maps are B_S maps from admissible supersingulars}]\label{main thm 4}
    Let $K\in \{K_0,K_1\}$, let $\pi$ be a $K$-supersingular representation, and $\pi^{\prime}$ a smooth representation of $G_S$ over $\bar{\mathbb{F}}_p$. Then, we have : 
    \begin{enumerate}
        \item $\pi|_{B_S}$ is an irreducible $B_S$-representation.

        \item The restriction map gives an isomorphism : $\mathrm{Hom}_{G_S}(\pi,\pi^{\prime})\cong \mathrm{Hom}_{B_S}(\pi,\pi^{\prime})$.
    \end{enumerate}
\end{thm}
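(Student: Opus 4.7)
The plan is to model the argument on Paškūnas' approach for $\mathrm{GL}_2(F)$, adapting it to $\mathrm{SL}_2(F)$, where supersingulars are parameterized via Serre weights of each maximal compact $K\in\{K_0,K_1\}$ modulo the supersingular Hecke relations. Part (1) -- the irreducibility of $\pi|_{B_S}$ -- is the structural core, and part (2) will be derived from it through a Frobenius-reciprocity argument combined with an analysis of a cokernel.

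For part (1), let $\tau\subseteq\pi$ be a nonzero $B_S$-subrepresentation; the goal is $\tau=\pi$. First, I would exhibit a nonzero $I_{1,S}$-fixed vector inside $\tau$, where $I_{1,S}$ is the pro-$p$ Iwahori of $G_S$: starting from any nonzero $v\in\tau$, smoothness of $\pi$ gives invariance under some compact open subgroup, and translating $v$ by suitable diagonal and upper-unipotent elements of $B_S$ produces a nonzero element of $\pi^{I_{1,S}}$. Next, I would use that $\pi^{I_{1,S}}$ is a finite-dimensional module over the pro-$p$ Iwahori--Hecke algebra of $G_S$ whose structure is pinned down by supersingularity so rigidly that any nonzero $B_S\cap I_{1,S}$-stable subspace must equal all of $\pi^{I_{1,S}}$. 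Finally, one has to prove the generation statement $\pi=B_S\cdot\pi^{I_{1,S}}$: using the realization of $\pi$ as a quotient of $\mathrm{ind}_{K}^{G_S}(\sigma)$ modulo a supersingular Hecke relation, the Bruhat decomposition $G_S=B_S\sqcup B_SwB_S$ together with this relation lets one rewrite $G_S$-translates of a distinguished generator as $B_S$-translates of Iwahori-fixed vectors.

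For part (2), given $f\in\mathrm{Hom}_{B_S}(\pi,\pi^{\prime})$, Frobenius reciprocity packages it as a $G_S$-map $\tilde f\colon\pi\to\mathrm{Ind}_{B_S}^{G_S}(\pi^{\prime}|_{B_S})$ via $\tilde f(v)(g)=f(gv)$. The canonical $G_S$-embedding $\iota\colon\pi^{\prime}\hookrightarrow\mathrm{Ind}_{B_S}^{G_S}(\pi^{\prime}|_{B_S})$ sending $v\mapsto(g\mapsto gv)$ identifies those $\tilde f$ arising from $G_S$-maps $\pi\to\pi^{\prime}$ as precisely the ones whose image lies inside $\iota(\pi^{\prime})$. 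The task therefore reduces to showing $\mathrm{Hom}_{G_S}(\pi,Q)=0$, where $Q=\mathrm{Ind}_{B_S}^{G_S}(\pi^{\prime}|_{B_S})/\iota(\pi^{\prime})$. By part (1), the image of any nonzero $G_S$-map $\pi\to Q$ would be a $B_S$-irreducible copy of $\pi$; analyzing $Q$ via Bruhat (its elements correspond to locally constant $\pi^{\prime}$-valued functions on the open cell, modulo $\iota(\pi^{\prime})$), one shows that such a map would supply a $B_S$-equivariant inclusion of $\pi$ into a representation of principal-series type, contradicting supersingularity.

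The main obstacle is expected to be the generation step in part (1), namely $\pi=B_S\cdot\pi^{I_{1,S}}$. In the principal-series case the analogous equality is immediate from the Iwasawa decomposition, but for supersingulars one must genuinely invoke the Hecke relation at $K$ to cancel the contribution of the Weyl element $w$. The two-conjugacy-class feature of $\mathrm{SL}_2(F)$ adds a further layer of bookkeeping: the relevant Hecke relation takes different forms for $K_0$ and $K_1$, and the closure argument must be carried out separately for each while tracking the corresponding Serre weight.
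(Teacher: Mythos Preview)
Your outline has two substantive gaps, one in each part, and both differ from the route the paper actually takes.

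\textbf{Part (1).} You write that you will ``use that $\pi^{I_{1,S}}$ is a finite-dimensional module over the pro-$p$ Iwahori--Hecke algebra.'' But finite-dimensionality of $\pi^{I_{1,S}}$ is exactly admissibility, and the theorem is stated \emph{without} assuming $\pi$ admissible; this is in fact one of the main points of the paper (see the discussion around Proposition~\ref{Hecke eigenvalue 0} and the comments on \cite{Le1}, \cite{Le2}). So this step simply fails as written. Moreover, the subsequent claim that ``any nonzero $B_S\cap I_{1,S}$-stable subspace must equal all of $\pi^{I_{1,S}}$'' is vacuous as stated: $B_S\cap I_{1,S}\subset I_{1,S}$ acts trivially on $\pi^{I_{1,S}}$, so every subspace is $B_S\cap I_{1,S}$-stable. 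What is actually needed is a specific $\bar{\mathbb{F}}_p[B_S]$-operator $\mathcal{S}$ (built from upper-unipotent translates followed by $\alpha_0^{-1}$) which preserves $\pi^{I_S(1)}$ and is \emph{locally nilpotent} there; this is Corollary~\ref{I_S(1) invariants are killed by large powers of S}, and its proof genuinely requires the finite-dimensionality result Proposition~\ref{finite dimensionality result} (the $\mathrm{SL}_2$-analogue of Barthel--Livn\'e's Proposition~32), not admissibility. Once one has a nonzero $v'\in\pi^{I_S(1)}\cap\bar{\mathbb{F}}_p[B_S]\cdot w$ with $\mathcal{S}v'=0$, Lemma~\ref{for Sv=0 action on v is controlled by B_S} gives an explicit formula expressing $w_0v'$ as a $B_S$-combination of $v'$, and then the Bruhat decomposition $G_S=B_SI_S(1)\sqcup B_Sw_0I_S(1)$ finishes. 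Your ``generation'' statement $\pi=B_S\cdot\pi^{I_{1,S}}$ is in the right spirit but is deduced, not assumed: it comes out of the formula for $w_0v'$ just mentioned, applied to one carefully chosen vector.

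\textbf{Part (2).} The Frobenius-reciprocity reduction to $\mathrm{Hom}_{G_S}(\pi,Q)=0$ with $Q=\mathrm{Ind}_{B_S}^{G_S}(\pi'|_{B_S})/\iota(\pi')$ is a reasonable reformulation, but the last sentence is where the argument would have to happen, and it is not an argument: $Q$ is not ``of principal-series type'' in any usable sense (it is an enormous representation with no evident filtration by principal series), and there is no off-the-shelf statement saying a supersingular cannot $B_S$-embed into it. The paper instead works directly with a given $\varphi\in\mathrm{Hom}_{B_S}(\pi,\pi')$: starting from a nonzero $v\in\pi^{I_S(1)}$ on which $I_S$ acts by a character, one iteratively improves $v$ (through a case analysis on whether $\mathcal{S}v$, $\mathcal{S}_2v$, etc.\ vanish) until $\varphi(v)$ is itself $I_S(1)$-invariant; then the same nilpotence-of-$\mathcal{S}$ and the explicit $w_0$-formula show $\varphi(w_0v')=w_0\varphi(v')$ for a suitable $v'$, forcing $\varphi$ to be $G_S$-equivariant. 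This hands-on approach is what makes the argument go through without any finiteness hypothesis on $\pi'$ or on $\pi^{I_S(1)}$.
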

A small subtlety must be pointed out here. We have stated our main result by breaking it into two parts, for non-supersingulars and supersingulars. This is because Theorem \ref{Paskunas' main result} implicitly assumes that smooth irreducibles of $\mathrm{GL}_2(F)$ (admitting central characters) are classified into four types; however, such a classification for $\mathrm{SL}_2(F)$ is available only with the condition of admissibility (see Théorème 0.6 in \cite{Abdellatif}).

We mention that in our initial attempt to prove the result for supersingulars of $\mathrm{SL}_2(F)$, we considered only admissible supersingulars. Such a result would have been slightly less valuable owing to some recent results on the existence of smooth irreducible non-admissible $\bar{\mathbb{F}}_p$-representations (for example \cite{Le1} and \cite{Le2}). It was suggested to the author by Peng Xu that this admissibility condition can be removed by proving an analogue of Proposition 32 of \cite{Barthel}. The author thanks Peng Xu for this suggestion; in fact, the methods we have used to prove the following Proposition are very close to the recent work of Xu \cite{XuHeckeEigenvalues} on the related rank 1 quasi-split semisimple group $U(2,1)$. It was pointed out in the introduction of Xu's paper \cite{XuHeckeEigenvalues} that other than $U(2,1)$, the only other example for which an analogue of this theorem is known is $\mathrm{GL}_2(F)$ (more precisely, smooth irreducibles of $\mathrm{GL}_2(F)$ having central characters; see Proposition 32 of \cite{Barthel}). We have therefore proved the following result which may be of independent interest (see Question 8 in \cite{abe2017questionsmodprepresentations}), providing a third example $\mathrm{SL}_2$ for which Hecke eigenvalues exist:
\begin{prop}[Proposition \ref{finite dimensionality result}]\label{Hecke eigenvalue 0}
    Let $\sigma$ be a smooth irreducible $\bar{\mathbb{F}}_p$-representation of $K_0$, and let $\pi$ be a smooth irreducible $\bar{\mathbb{F}}_p$-representation of $G_S$. If $\varphi\in \mathrm{Hom}_{G_S}(\mathrm{ind}_{K_0}^{G_S}(\sigma),\pi)$ is non-zero, the $\mathrm{End}_{\bar{\mathbb{F}}_p[G_S]}(\mathrm{ind}_{K_0}^{G_S}(\sigma))$-right-submodule of $\mathrm{Hom}_{G_S}(\mathrm{ind}_{K_0}^{G_S}(\sigma),\pi)$ generated by $\varphi$ is of finite dimension.
\end{prop}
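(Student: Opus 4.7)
Via Frobenius reciprocity, $\mathrm{Hom}_{G_S}(\mathrm{ind}_{K_0}^{G_S}(\sigma),\pi)\cong\mathrm{Hom}_{K_0}(\sigma,\pi|_{K_0})$, and $\varphi$ corresponds to a $K_0$-equivariant map $\varphi_0:\sigma\to\pi$; the right action of the Hecke algebra $\mathcal{H}:=\mathrm{End}_{\bar{\mathbb{F}}_p[G_S]}(\mathrm{ind}_{K_0}^{G_S}(\sigma))$ thereby becomes an explicit action on $\mathrm{Hom}_{K_0}(\sigma,\pi)$ given by finite sums over $K_0$-coset representatives in the support of each Hecke operator. By the computations of Abdellatif \cite{Abdellatif} and Cheng \cite{Cheng}, $\mathcal{H}$ is a polynomial algebra $\bar{\mathbb{F}}_p[T]$ in one generator $T$, supported on the single double coset $K_0\alpha K_0$, where $\alpha=\mathrm{diag}(\varpi,\varpi^{-1})$ for a uniformizer $\varpi$ of $F$. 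Consequently, the claim that $\varphi\cdot\mathcal{H}$ is finite-dimensional reduces to producing an integer $N\geq 0$ and scalars $c_0,\dots,c_N\in\bar{\mathbb{F}}_p$, not all zero, with $\sum_{i=0}^{N}c_i(\varphi\cdot T^i)=0$.

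Following the strategy of Xu \cite{XuHeckeEigenvalues}, itself modelled on Proposition 32 of Barthel–Livne \cite{Barthel}, my plan proceeds in three steps. First, I would compute, for each $n\geq 0$, an explicit system of representatives for $K_0\backslash K_0\alpha^n K_0$ in terms of lower-triangular unipotents, using the Cartan decomposition of $G_S$ (equivalently, the Bruhat–Tits tree of $\mathrm{SL}_2(F)$); this gives a closed-form expression for $\varphi\cdot T^n\in\mathrm{Hom}_{K_0}(\sigma,\pi)$ as a finite sum of $K_0$-translates of $\varphi_0$ indexed by these representatives. Second, fix a nonzero $v_0\in\sigma$ and set $v:=\varphi_0(v_0)\in\pi$; irreducibility of $\pi$ together with $v\neq 0$ yields $\pi=\bar{\mathbb{F}}_p[G_S]\cdot v$. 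Third, using smoothness of $\pi$ to choose an open pro-$p$ subgroup $H\subseteq K_0$ that fixes $v$, the task is to exhibit a \emph{fixed} finite-dimensional subspace $W\subseteq\pi$ containing $(\varphi\cdot T^n)(v_0)$ for every $n\geq 0$; any non-trivial linear relation inside $W$ among these vectors then produces the required polynomial in $T$.

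The principal obstacle is the third step: because $\pi$ is not assumed admissible, $\pi^H$ itself may be infinite-dimensional, and the target subspace $W$ must be built intrinsically rather than read off as some isotypic piece of $\pi^H$. The key idea, exactly as in Xu's treatment of $U(2,1)$, is to exploit the rank-one combinatorics of the tree: the Hecke identities satisfied by $T$ on $\mathrm{ind}_{K_0}^{G_S}(\sigma)$ let one telescope the contribution of the deep cosets $K_0\alpha^n K_0$—whose number grows with $n$—into a bounded collection of $G_S$-translates of $v$ lying in a fixed finite-dimensional subspace $W$. Making this telescoping precise for $\mathrm{SL}_2(F)$ and every irreducible weight $\sigma$ of $K_0$—in particular, verifying that the relations inherited from $\mathcal{H}\cong\bar{\mathbb{F}}_p[T]$ really do collapse $(\varphi\cdot T^n)(v_0)$ into a single $W$—is the technical core of the argument; once it is in place, a pigeonhole among the finitely many directions in $W$ yields the desired polynomial relation and hence the proposition.
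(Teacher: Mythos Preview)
Your proposal identifies the right target---a polynomial relation among the $\varphi\cdot T^i$---but the route you sketch does not reach it. The third step, where you hope to trap all $(\varphi\cdot T^n)(v_0)$ inside a fixed finite-dimensional $W\subseteq\pi$, is the entire content of the proposition, and the ``telescoping'' you invoke is not what actually happens in Barthel--Livn\'e or in Xu. There is no mechanism on the $\pi$ side that collapses the growing support of $T^n[1,v_0]$ into a bounded piece of $\pi$: the Hecke relations live in $\mathrm{ind}_{K_0}^{G_S}(\sigma)$, not in $\pi$, and without admissibility you have no handle on how large the relevant isotypic pieces of $\pi$ are.

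The paper's proof (and the proofs it is modelled on) works on the \emph{source} side, not the target. Since $\varphi$ is surjective but not injective, $\ker\varphi$ is nonzero, hence $\ker(\varphi)^{I_S(1)}\neq 0$, and one extracts a nonzero vector in $\ker(\varphi)^{(I_S,\chi_\sigma)}$. This is a nonzero $\mathcal{H}(G_S,I_S,\chi_\sigma)$-submodule of $(\mathrm{ind}_{K_0}^{G_S}(\sigma))^{(I_S,\chi_\sigma)}$, and the crucial structural input---Proposition~\ref{finite codimension}, proved by explicit computation of how the Iwahori--Hecke operators act on the basis $(f_n)$---says every such submodule has \emph{finite codimension}. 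One then passes from the Iwahori level to the spherical level via a commutative square built from the surjection $\mathrm{ind}_{I_S}^{G_S}(\chi_\sigma)\twoheadrightarrow\mathrm{ind}_{K_0}^{G_S}(\sigma)$; the finite codimension of $\ker(\varphi)^{(I_S,\chi_\sigma)}$ forces the image of $\Phi^\ast:T\mapsto\varphi\circ T$ to be finite-dimensional. The point you are missing is this switch from ``bound something in $\pi$'' to ``show $\ker\varphi$ is large inside the compact induction, at the Iwahori level.''
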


Finally, we mention that there is some consensus that the restriction theorem of Paškūnas may be true for other semisimple $p$-adic groups of rank 1. This was announced as a forthcoming joint work by Abdellatif and Hauseux \cite{Abdellatif-Hauseux}. However, they have considered only admissible supersingular representations. In the present paper, we focus on the split semisimple rank 1 group $\mathrm{SL}_2(F)$, since we are able make use of several finer structural results (for example Propositions \ref{action of degenerate IHA on isoty-comp of cpt ind}, \ref{action of non-degenerate IHA on isoty-comp of cpt ind}, and \ref{finite codimension}) to push the main result beyond admissible representations. The analogues of some of these crucial results (for example Proposition \ref{finite codimension}) may not be true for general classes of $p$-adic groups (see Theorem 5.1 in \cite{XuHeckeEigenvalues}). 
  
\subsection{Organization of the paper}
Our paper is organised as follows : In Section \ref{background}, we set up the notations, and state some standard facts about smooth representations of locally profinite groups. In Section \ref{weights and carter lusztig}, we recall the main theorem of Carter-Lusztig theory specialized to $\mathrm{SL}_2(\mathbb{F}_q)$. In Section \ref{Hecke algebras and eigenvalues}, we first recall some standard results about the structure of the spherical Hecke algebra. Then we prove certain structural results related to the action of Iwahori Hecke algebras on the isotypic components of the compact inductions of weights. Using these structural results, we then prove certain important finite-dimensionality results (Proposition \ref{finite codimension} and Proposition \ref{finite dimensionality result}). In Section \ref{non-supsing case}, we prove Theorem \ref{main theorem 2} and Theorem \ref{main thm 3} for non-supersingulars. Finally, in Section \ref{supsing case}, we prove Theorem \ref{main thm 4} for supersingulars.

\section{Generalities and notations}\label{background}

 We recall some standard facts from the theory of mod $p$ representations of locally profinite groups in Section \ref{Generalities}. For more details, the reader can see Section 2 of \cite{Barthel}.

\subsection{Compact induction and Hecke algebras}\label{Generalities} We take $p$ to be a prime throughout, and $\Bar{\mathbb{F}}_p$ a fixed algebraic closure of the finite field $\mathbb{F}_p$ with $p$ elements. All representations in this article, unless otherwise mentioned, are considered over $\bar{\mathbb{F}}_p$. We recall some general results. Just for this subsection, we let $G$ be any locally profinite group, and $H$ some closed subgroup. Recall that a representation $\pi$ of $G$ is called \textit{smooth} if every vector $v\in \pi$ is fixed by some compact open subgroup of $G$. A representation $\pi$ of $G$ is called \textit{admissible} if for every compact open subgroup $K$ of $G$, the space of invariants $\pi^K$ is finite dimensional. 

Let $\sigma$ be a smooth $\bar{\mathbb{F}}_p$-representation of $H$. We consider the following space of functions : $$\text{IND}_H^G(\sigma):=\{f:G\to \sigma\,|\, f(hg)=\sigma(h)(f(g)),\,\forall g\in G,h\in H\}.$$
Then, $G$ acts on $\text{IND}_H^G(\sigma)$ via $(g\cdot f)(g^{\prime}):=f(g^{\prime}g)$. The smooth part of $\text{IND}_H^G(\sigma)$, that is, vectors that have open stabilizers, is denoted by $\text{Ind}_H^G(\sigma)$, and this subrepresentation is called the \textit{smooth induction} of $\sigma$. The subrepresentation of $\text{Ind}_H^G(\sigma)$ consisting of functions $f$ such that the image of its support $\text{Supp}(f)$ inside $H\backslash G$ is compact (equivalently, finite, whenever $H$ is also open) is denoted by $\text{c-Ind}_H^G(\sigma)$ or $\text{ind}_H^G(\sigma)$, and is called the \textit{compact induction} of $\sigma$.

In practice, whenever we use compact induction the subgroup $H$ is typically considered to be open as well. So, for the remaining part of this subsection we take $H$ to be an open subgroup of $G$. Then, by virtue of the $H$-linearity, the support of any $f\in \text{ind}_H^G(\sigma)$ can be written as a finite disjoint union of right $H$-cosets. We define some standard functions in $\text{ind}_H^G(\sigma)$. For $g\in G$ and $v\in \sigma$ we define : $$[g,v](x):=\begin{cases}
    \sigma(xg)(v) & \text{if}\,x\in Hg^{-1}\\
    0 & \text{otherwise}
\end{cases}.$$ 
It can be checked that $g\cdot [g^{\prime},v]=[gg^{\prime},v]$ and $[gh,v]=[g,\sigma(h)(v)]$ for every $g,g^{\prime}\in G$ and $h\in H$. Also, any $f\in \text{ind}_H^G(\sigma)$ can be written as $$f=\sum\limits_{Hg\in \text{Supp($f$)}}[g^{-1},f(g)].$$

We define $\mathbb{H}_{\bar{\mathbb{F}}_p}(G,H,\sigma)$ to be the $\bar{\mathbb{F}}_p$-algebra of functions $\Phi:G\to \text{End}_{\bar{\mathbb{F}}_p}(\sigma)$ which satisfy the following conditions:
\begin{enumerate}
    \item $\Phi(hgh^{\prime})=\sigma(h)\circ \Phi(g)\circ \sigma(h^{\prime})$ for all $h,h^{\prime}\in H$ and $g\in G$.

    \item For each $v\in \sigma$, the map $g\mapsto \Phi(g)(v):G\to \sigma$ is locally constant with the image of its support in $H\backslash G$ being compact.
\end{enumerate}
We equip $\mathbb{H}_{\bar{\mathbb{F}}_p}(G,H,\sigma)$ with the convolution product : $(\Phi_1\star \Phi_2)(g):=\sum_x\Phi_1(x)\Phi_2(x^{-1}g),$ where $x$ varies over a system of representatives of $G/H$ in $G$. We can check that this sum is independent of the choice of representatives, and that the sum is finite when evaluated on some vector in $\sigma$.

On the other hand, we have the $\bar{\mathbb{F}}_p$-algebra $\mathcal{H}_{\bar{\mathbb{F}}_p}(G,H,\sigma):=\text{End}_{\bar{\mathbb{F}}_p[G]}(\text{ind}_H^G(\sigma))$ of $G$-intertwiners. Then, the map $$\eta : \mathbb{H}_{\bar{\mathbb{F}}_p}(G,H,\sigma)\to \mathcal{H}_{\bar{\mathbb{F}}_p}(G,H,\sigma),$$ given by : $$\eta(\Phi)(f)(g):=\sum\limits_{x\in G/H}\Phi(x)f(x^{-1}g),$$ for $\Phi\in \mathbb{H}_{\bar{\mathbb{F}}_p}(G,H,\sigma),\, f\in \text{ind}_H^G(\sigma),$ and $g\in G$, is an isomorphism of algebras with inverse given by : $$\eta^{-1}(T)(g)(v)=T([1,v])(g),$$ for $T\in \mathcal{H}_{\bar{\mathbb{F}}_p}(G,H,\sigma),\, g\in G,\, v\in \sigma$. 

In the present article we will be dealing with the situation when $\sigma$ is finite dimensional. For this we make some observations. At first, note that in this case $\mathbb{H}_{\bar{\mathbb{F}}_p}(G,H,\sigma)$ consists of functions $\Phi:G\to \text{End}_{\bar{\mathbb{F}}_p}(\sigma)$ which are locally constant with the image of the support in $H\backslash G$ compact, and satisfying the condition (1) above. Also, the support of $\Phi$ can be written as a finite disjoint union of double cosets in $H\backslash G/H$. If $T_{\Phi}\in \mathcal{H}_{\bar{\mathbb{F}}_p}(G,H,\sigma)$ denotes the endomorphism associated to $\Phi$ by the above isomorphism $\eta$, then it is easy to check that $$T_{\Phi}([g,v])=\sum\limits_{yH\in G/H}[gy,\Phi(y^{-1})(v)],$$ for any standard function $[g,v]\in \text{ind}_H^G(\sigma)$. Finally, when $\Phi$ is supported only on one double coset, say $Hg_0H$, we can write $Hg_0H$ as a finite union of right $H$-cosets because of the support condition on $\Phi$. So, we can write $Hg_0^{-1}H=\bigsqcup_{i=1}^mk_ig_0^{-1}H$, and replacing $yH$ by $k_ig_0^{-1}H$ we have: 
\begin{equation}\label{action of a general hecke operator on a standard function}
T_{\Phi}([g,v])=\sum\limits_{i=1}^m[gk_ig_0^{-1},\Phi(g_0)\sigma(k_i^{-1})v].    
\end{equation}

Finally, we mention the \textit{Frobenius reciprocity} for compact induction : Let $G$ be a locally profinite group, and $H$ an open subgroup $G$. Let $\pi$ be a smooth representation of $G$, and $\sigma$ a smooth representation of $H$. Then, the map $$\psi\mapsto [w\mapsto \psi([1,w]): \mathrm{Hom}_G(\mathrm{ind}_H^G(\sigma),\pi)\to \mathrm{Hom}_H(\sigma,\pi|_H)$$ is an isomorphism of vector spaces. The inverse is given by $$\varphi\mapsto [f\mapsto \sum_{g\in H\backslash G}\pi(g^{-1})\varphi(f(g))]: \mathrm{Hom}_H(\sigma,\pi|_H)\to \mathrm{Hom}_G(\mathrm{ind}_H^G(\sigma),\pi).$$
The proofs of all the facts mentioned in this subsection are fairly routine and can be found in \cite{Barthel}.

\subsection{Notations}\label{notations}

 Let $F$ be a non-archimedean local field of residual characteristic $p$,  with $\mathcal{O}_F$ its ring of integers, $\mathfrak{p}_F$ its maximal ideal, $\omega_F$ a fixed uniformizer, and $k_F:=\mathcal{O}_F/\mathfrak{p}_F$ the residue field of cardinality say $q=p^n$. Borrowing the notations of \cite{Abdellatif}, we set $G_S:=\mathrm{SL_2}(F)$. The corresponding standard maximal compact subgroups are $K_0:=\mathrm{SL_2}(\mathcal{O}_F)$, and $K_1:=\begin{pmatrix}
    1 & 0\\
    0 & \omega_F
\end{pmatrix}K_0\begin{pmatrix}
    1 & 0\\
    0 & \omega_F^{-1}
\end{pmatrix}$. Let $I_S$ denote the Iwahori subgroup, and $I_S(1)$ the pro-$p$-Iwahori subgroup of $G_S$. We have $$I_S=\begin{pmatrix}
    \mathcal{O}_F^{\times} & \mathcal{O}_F\\
    \mathfrak{p}_F & \mathcal{O}_F^{\times} 
\end{pmatrix}\cap K_0,\: I_S(1)=\begin{pmatrix}
    1+\mathfrak{p}_F & \mathcal{O}_F\\
    \mathfrak{p}_F &  1+\mathfrak{p}_F
\end{pmatrix}\cap K_0.$$ We denote by $B_S$ the Borel subgroup (of upper triangular matrices) in $G_S$, and by $T_S$ we denote the diagonal matrices in $B_S$. We also denote by $U_S(\mathfrak{p}_F^n)$ (resp. $\bar{U}_S(\mathfrak{p}_F^n)$) the upper unipotent (resp. lower unipotent) matrices with the top right (resp. bottom left) entry in $\mathfrak{p}_F^n$, for $n\in \mathbb{Z}$. We set $$\alpha_0:=\begin{pmatrix}
    \omega_F^{-1} & 0\\
    0 & \omega_F
\end{pmatrix}, w_0:=\begin{pmatrix}
    0 & -1\\
    1 & 0
\end{pmatrix}, \beta_0:=\alpha_0w_0=\begin{pmatrix}
    0 & -\omega_F^{-1}\\
    \omega_F & 0
\end{pmatrix}.$$
Finally, for a tuple $\lambda=(\lambda_0,...,\lambda_{m-1})\in k_F^m$, we set $$A(\lambda):=\sum_{i=0}^{m-1}[\lambda_i]\omega_F^i\in \mathcal{O}_F$$ where $[\cdot]:\mathbb{F}_q^{\times}\to \mathcal{O}_F^{\times}$ denotes the \textit{multiplicative lift}, and we set $[0]:=0$.

\section{Carter-Lusztig theory for $\mathrm{SL}_2(\mathbb{F}_q)$}\label{weights and carter lusztig}

For this section we take $\Gamma$ to be $\mathrm{SL_2}(\mathbb{F}_q)$, $B$ as the subgroup of upper triangular matrices, $U$ as the subgroup of upper unipotent matrices, and $T$ as the diagonal matrices in $B$. In this section we recall the theory of mod $p$ representations of $\mathrm{SL}_2(\mathbb{F}_q)$. We do this using the very elegant theory of Carter and Lusztig which gives a uniform construction of all mod $p$ irreducibles of finite groups with a split $BN$-pair. For our purpose we specialize this theory to the group $\Gamma$ as that is what we need in this paper. We will present the main results without proof. The interested reader can see the beautiful paper of Carter and Lusztig \cite{Carter-Lusztig}, where the proofs are of fairly elementary nature.

Given a character $\chi:T\to \bar{\mathbb{F}}_p^{\times}$, we can consider it to be a character, denoted again by $\chi$, of $B$ by setting $\chi|_{U}=1$. Then, we define a function $\varphi_{\chi}\in \text{Ind}_B^{\Gamma}(\chi)$ such that $\varphi|_{Bw_0U}=0$ and $\varphi(I_2)=1$, where $I_2$ denotes the identity matrix. We have that $\varphi_{\chi}$ generates $\text{Ind}_B^{\Gamma}(\chi)$ as $\bar{\mathbb{F}}_p[\Gamma]$-module. Next, we define an $\bar{\mathbb{F}}_p[{\Gamma}]$-module endomorphism $T_{w_0}$ of $\text{Ind}_U^{\Gamma}(1)=\{f:U\backslash \Gamma\to \bar{\mathbb{F}}_p\}$  by $$T_{w_0}(f)(Ug):=\sum_{Ug^{\prime}\subset Uw_0^{-1}Ug}f(Ug^{\prime})\: \text{ for }f\in \text{Ind}_U^{\Gamma}(1).$$ Then, $T_{w_0}$ restricts to a map $T_{w_0}: \text{Ind}_B^{\Gamma}(\chi)\to \text{Ind}_B^{\Gamma}(\chi^{w_0}).$ Also $\text{Ind}_B^{\Gamma}(\chi)^{U}$ is of dimension $2$, and generated by the functions $\varphi_{\chi}$ and $T_{w_0}\varphi_{\chi^{w_0}}$. Note that $T_{w_0}:\text{Ind}_B^{\Gamma}(\chi^{w_0})\to \text{Ind}_B^{\Gamma}(\chi)$. So, $T_{w_0}^2:\text{Ind}_B^{\Gamma}(\chi)\to \text{Ind}_B^{\Gamma}(\chi)$ for any character $\chi$ of $B$. It can be shown (see Proposition 3.15 of \cite{Carter-Lusztig}) that
\begin{equation}\label{action of T_w_0 on generator}
    T_{w_0}\varphi_{\chi}=\sum_{\lambda\in \mathbb{F}_q}\begin{pmatrix}1 & \lambda\\0 & 1\end{pmatrix}w_0^{-1}\cdot \varphi_{\chi^{w_0}}.
\end{equation}
We also have : $T_{w_0}^2=\begin{cases} 
      0 ,& \chi\neq 1 \\
      -T_{w_0}, & \chi=1 
   \end{cases}.$
Next, for a character $\chi$ of $B$, we set $$J_0(\chi):=\begin{cases} 
      \emptyset, & \chi\neq 1 \\
      \{1\}, & \chi=1 
   \end{cases}.$$ Then, for each subset $J\subset J_0(\chi)$, we define an intertwiner $\Theta_{w_0}^{J}:\text{Ind}_B^{\Gamma}(\chi)\to \text{Ind}_B^{\Gamma}(\chi^{w_0})$ as follows : $$\Theta_{w_0}^{J}:=\begin{cases} 
      T_{w_0}, & J=\emptyset \\
      Id+T_{w_0}, & J=\{1\}. 
   \end{cases}$$
We set $f_{\chi}^{J}=\Theta_{w_0}^{J}\varphi_{\chi}$. 
Now we can state the main theorem of Carter-Lusztig theory.

\begin{thm}\label{Carter-Lusztig thory}[Corollary 6.5, Theorem 7.1, Corollary 7.2, and Theorem 7.4 in \cite{Carter-Lusztig}]
For each pair $(\chi,J\subset J_0(\chi))$, the only $U-$invariant vectors in $\Theta_{w_0}^{J}(\mathrm{Ind}_B^{\Gamma}(\chi))$ are scalar multiples of $f_{\chi}^{J}$. The module $\Theta_{w_0}^{J}(\mathrm{Ind}_B^{\Gamma}(\chi))$ is an irreducible submodule of $\mathrm{Ind}_B^{\Gamma}(\chi^{w_0})$ generated by $f_{\chi}^J$, and the subgroup $B$ acts on the line $\bar{\mathbb{F}}_p\cdot f_{\chi}^J$ via the character $\chi$. For distinct pairs $(\chi,J)$ the corresponding modules $\Theta_{w_0}^{J}(\mathrm{Ind}_B^{\Gamma}(\chi))$ are non-isomorphic. The modules $\Theta_{w_0}^{J}(\mathrm{Ind}_B^{\Gamma}(\chi))$ for $J\subset J_0(\chi)$ are the only irreducible submodules of $\mathrm{Ind}_B^{\Gamma}(\chi^{w_0})$. Every irreducible $\bar{\mathbb{F}}_p[{\Gamma}]$-module is isomorphic to $\Theta_{w_0}^{J}(\mathrm{Ind}_B^{\Gamma}(\chi))$ for some pair $(\chi,J)$.
\end{thm}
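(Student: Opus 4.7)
The plan is to follow the classical Carter-Lusztig strategy, which rests on two central ingredients: the Bruhat decomposition $\Gamma = B \sqcup Bw_0U$, and the fact that $U$ is a $p$-group acting on $\bar{\mathbb{F}}_p$-vector spaces, so every nonzero $\bar{\mathbb{F}}_p[\Gamma]$-submodule of a smooth representation contains a nonzero $U$-fixed vector.

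I would begin by analyzing the $U$-invariants of $\mathrm{Ind}_B^\Gamma(\chi^{w_0})$. Decomposing a $U$-invariant function along the Bruhat cells shows that $\mathrm{Ind}_B^\Gamma(\chi^{w_0})^U$ is $2$-dimensional with basis $\{\varphi_{\chi^{w_0}}, T_{w_0}\varphi_\chi\}$; formula (\ref{action of T_w_0 on generator}) then shows that $T$ acts on these two vectors through $\chi^{w_0}$ and $\chi$ respectively, which already yields the $B$-character assertion on $\bar{\mathbb{F}}_p \cdot f_\chi^J$ for both choices of $J$. A short computation using the same formula also gives the quadratic relations $T_{w_0}^2\varphi_\chi = 0$ for $\chi \neq 1$ and $T_{w_0}^2\varphi_1 = -T_{w_0}\varphi_1$; in particular, $f_1^{\{1\}} = \varphi_1 + T_{w_0}\varphi_1$ lies in $\ker(T_{w_0})$.

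The next step combines the $U$-invariant statement with irreducibility. For $J = \emptyset$, the image $T_{w_0}\mathrm{Ind}_B^\Gamma(\chi)$ lies in $\ker(T_{w_0})$ by the quadratic relations, so its $U$-fixed vectors lie in $\ker(T_{w_0}) \cap \mathrm{Ind}_B^\Gamma(\chi^{w_0})^U$, which cuts the $2$-dimensional space down to the single line $\bar{\mathbb{F}}_p \cdot f_\chi^\emptyset$. An analogous computation, using $(\mathrm{Id} + T_{w_0})T_{w_0} = 0$ for $\chi = 1$, handles $J = \{1\}$. Irreducibility is then immediate from the $p$-group principle: any nonzero submodule of $\Theta_{w_0}^J(\mathrm{Ind}_B^\Gamma(\chi))$ contains a $U$-fixed vector, which must be a scalar multiple of $f_\chi^J$ and therefore generates the whole module. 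Non-isomorphism for distinct pairs $(\chi, J)$ with $\chi \neq 1$ is read off from the $T$-character $\chi$ on the distinguished $U$-fixed line, while the two submodules obtained at $\chi = 1$ are distinguished by the fact that one is the image of $T_{w_0}$ (the Steinberg) and the other contains $\varphi_1$ (the trivial representation) — equivalently, by their dimensions.

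Finally, to see that every irreducible $\bar{\mathbb{F}}_p[\Gamma]$-module $\pi$ appears in this list, apply the $p$-group principle to $\pi$: it contains a $B$-eigenline transforming by some character $\chi$, and Frobenius reciprocity (together with contragredient duality on the finite group $\Gamma$) embeds $\pi$ into $\mathrm{Ind}_B^\Gamma(\eta)$ for a suitable $\eta$; the preceding classification of irreducible submodules identifies $\pi$ with one of the $\Theta_{w_0}^J(\mathrm{Ind}_B^\Gamma(\chi'))$. A numerical check — the pairs $(\chi, J)$ number $(q-2) + 2 = q$, matching the known count of irreducible $\bar{\mathbb{F}}_p$-representations of $\mathrm{SL}_2(\mathbb{F}_q)$ — confirms exhaustiveness. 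The main technical obstacle is the bookkeeping at $\chi = 1$, where the $T$-character on the $U$-fixed line fails to separate the trivial and Steinberg constituents, forcing one to rely on the explicit relation $T_{w_0}^2 = -T_{w_0}$ and the parametrization by $J$.
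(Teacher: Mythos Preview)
The paper does not prove this theorem at all: it is stated with the preamble ``We will present the main results without proof'' and attributed to Corollary~6.5, Theorem~7.1, Corollary~7.2, and Theorem~7.4 of Carter--Lusztig. Your sketch is precisely the Carter--Lusztig strategy that the paper is citing, so in spirit you are doing exactly what the paper defers to. The skeleton is sound: two-dimensional $U$-invariants, the quadratic relation for $T_{w_0}$, the $p$-group lemma forcing every nonzero submodule to meet the $U$-fixed line, and a count of pairs.

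A few points would need tightening if you wrote this out in full. First, your sentence ``the image $T_{w_0}\mathrm{Ind}_B^\Gamma(\chi)$ lies in $\ker(T_{w_0})$'' is only literally true for $\chi\neq 1$; for $\chi=1$ and $J=\emptyset$ you instead need $T_{w_0}(\mathrm{Id}+T_{w_0})=0$, placing the image in $\ker(\mathrm{Id}+T_{w_0})$, whose intersection with the $U$-invariants is $\bar{\mathbb{F}}_p\cdot T_{w_0}\varphi_1$. Second, the module $\Theta_{w_0}^{\{1\}}(\mathrm{Ind}_B^\Gamma(1))$ does not contain $\varphi_1$ itself; its $U$-fixed line is $\bar{\mathbb{F}}_p\cdot(\varphi_1+T_{w_0}\varphi_1)$, which is the constant function and hence generates the trivial representation --- that is what separates it from the Steinberg. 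Third, you assert but do not argue the clause ``the only irreducible submodules'': the missing observation is that any irreducible submodule $M\subset\mathrm{Ind}_B^\Gamma(\chi^{w_0})$ is stable under the $\Gamma$-intertwiner $T_{w_0}$, so $T_{w_0}|_M$ is a scalar by Schur, which forces $M^U$ into one of the $T_{w_0}$-eigenlines you have already identified. With these adjustments the argument is complete and matches the cited source.
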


\begin{rmk}\label{steinberg mod p}
\begin{enumerate}
    \item Note that if $\sigma$ is a mod $p$ irreducible representation of $\mathrm{SL_2}(\mathcal{O}_F)$, then every vector in $\sigma$ is fixed by $K_0(1):=\begin{pmatrix}1+\mathfrak{p}_F & \mathfrak{p}_F\\ \mathfrak{p}_F & 1+\mathfrak{p}_F\end{pmatrix}\cap K_0$, as $K_0(1)$ is a pro-$p$-group and normal in $K_0$. Hence, we can consider $\sigma$ to be a representation of $K_0/K_0(1)\simeq \mathrm{SL_2}(k_F)$(via the mod $p$ reduction map). Therefore, mod $p$ irreducibles of $K_0$ and $\mathrm{SL_2}(\mathbb{F}_q)$ are essentially same, and they are called \textbf{Serre weights} or simply \textbf{weights} of $\mathrm{SL_2}(\mathbb{F}_q)$ or $\mathrm{SL_2}(\mathcal{O}_F)$. The irreducibles of $K_1$ are then the conjugate representations $\sigma^{\alpha}$, where $\alpha={\begin{pmatrix}
    1 & 0\\
    0 & \omega_F
    \end{pmatrix}}$. Hence, we can think of the generating vectors $f_{\chi}^J$ as vectors fixed by $I_S(1)$ on which $I_S$ acts by the character $\chi\circ \mathrm{red_p}$. Also, note that $f^{J}_{\chi}$ is an eigenvector of the operator $T_{w_0}$.

    \item The traditional way in which mod $p$ irreducibles of $\mathrm{SL_2}(\mathbb{F}_q)$ are realized is by using the symmetric powers. Let $r\in \{0,\dots,q-1\}$. We write $r=r_0+r_1p+\cdots+r_{n-1}p^{n-1}$, and denote $(r_0,\dots,r_{n-1})$ by $\Vec{r}$. Let $\sigma_{r_l}:=\text{Sym}^{r_l}\bar{\mathbb{F}}_p^2=\bigoplus\limits_{i=0}^r\bar{\mathbb{F}}_p X^{r_l-i}Y^i$ denote the representation of $\mathrm{SL_2}(\mathbb{F}_q)$ on which the action of a matrix $\begin{pmatrix}
    a & b\\
    c & d
    \end{pmatrix}\in \mathrm{SL_2}(\mathbb{F}_q)$ is given by $$\begin{pmatrix}
    a & b\\
    c & d
    \end{pmatrix}\cdot (X^{r_l-i}Y^i):=(aX+cY)^{r_l-i}(cX+dY)^i.$$ Let $\sigma_{\Vec{r}}:=\sigma_{r_0}\otimes \sigma_{r_1}\otimes\cdots\otimes \sigma_{r_{n-1}}$, and on this space we define the action of $\begin{pmatrix}
    a & b\\
    c & d
    \end{pmatrix}\in \mathrm{SL_2}(\mathbb{F}_q)$ by $$\begin{pmatrix}
    a & b\\
    c & d
    \end{pmatrix}\cdot (\bigotimes_{l=0}^{n-1}X^{r_l-i_l}Y^{i_l}):=\bigotimes_{l=0}^{n-1}(\begin{pmatrix}
    a^{p^l} & b^{p^l}\\
    c^{p^l} & d^{p^l}
    \end{pmatrix}\cdot X^{r_l-i_l}Y^{i_l}) $$ We write $\sigma_{\Vec{r}}$ as $\mathrm{Sym}^{r_0}\bar{\mathbb{F}}_p^2\otimes (\mathrm{Sym}^{r_1}\bar{\mathbb{F}}_p^2)^{\mathrm{Fr}}\otimes\cdots \otimes (\mathrm{Sym}^{r_{n-1}}\bar{\mathbb{F}}_p^2)^{\mathrm{Fr}^{n-1}},$ where the notation $\mathrm{Fr}^i$ indicates that the action on the $i$-th component is twisted by the $i$-th power of the standard Frobenius.

    \item For $\chi\neq 1$ clearly $\Theta_{w_0}^{\emptyset}(\mathrm{Ind}_B^{\Gamma}(\chi))$ are non-trivial weights of $K_0$. In fact, one can show that if $\chi$ is the $r$-th power map on $\mathbb{F}_q^{\times}$ for some $0\leq r\leq q-1$, then writing $r=r_0+r_1p+\cdots+r_{n-1}p^{n-1}$, we have $$\Theta_{w_0}^{\emptyset}(\mathrm{Ind}_B^{\Gamma}(\chi))\cong \mathrm{Sym}^{r_0}\bar{\mathbb{F}}_p^2\otimes (\mathrm{Sym}^{r_1}\bar{\mathbb{F}}_p^2)^{\mathrm{Fr}}\otimes\cdots \otimes (\mathrm{Sym}^{r_{n-1}}\bar{\mathbb{F}}_p^2)^{\mathrm{Fr}^{n-1}}.$$ For $\chi=1$, we can show that the irreducible representation $\Theta_{w_0}^{\emptyset}(\mathrm{Ind}_B^{\Gamma}(1)$ is non-trivial. In fact, in this case we have $$\Theta_{w_0}^{\emptyset}(\mathrm{Ind}_B^{\Gamma}(1))\cong \mathrm{Sym}^{p-1}\bar{\mathbb{F}}_p^2\otimes (\mathrm{Sym}^{p-1}\bar{\mathbb{F}}_p^2)^{\mathrm{Fr}}\otimes\cdots \otimes (\mathrm{Sym}^{p-1}\bar{\mathbb{F}}_p^2)^{\mathrm{Fr}^{n-1}}.$$ This is proved for the group $\mathrm{GL_2}(\mathbb{F}_q)$ in Proposition $3.2.2$ of Paškūnas' book \cite{PaskunasCoeff}. The corresponding proof for $\mathrm{SL_2}(\mathbb{F}_q)$ can be reproduced line-by-line with obvious changes. The representation $\Theta_{w_0}^{\emptyset}(\mathrm{Ind}_B^{\Gamma}(1))$ is the so called mod $p$ Steinberg representation.

\end{enumerate}
\end{rmk}

\section{Hecke algebras and eigenvalues}\label{Hecke algebras and eigenvalues}
In this section we recall some familiar structural results about the spherical and Iwahori Hecke algebras, and prove a certain finiteness result analogous to Proposition 32 of \cite{Barthel}.

\subsection{Spherical Hecke Algebras}\label{spherical Hecke alg structure} In this article we will typically use the notation $\sigma_{\chi,J}$ or $\sigma_{\chi}$ (where $\chi$ is a character of $\mathbb{F}_q^{\times}$) or $\sigma_{\Vec{r}}$ or simply $\sigma$ to denote weights of $K_0$, whichever best suites the context. Given a weight $\sigma_{\chi,J}$ of $K_0$, it is of interest to know the structure of the \textit{spherical Hecke algebra} $\mathcal{H}_{\bar{\mathbb{F}}_p}(G_S,K_0,\sigma_{\chi,J})$. It turns out that we have $$\mathcal{H}_{\bar{\mathbb{F}}_p}(G_S,K_0,\sigma_{\chi,J})=\bar{\mathbb{F}}_p[\tau],$$ for a single Hecke operator $\tau\in \mathcal{H}_{\bar{\mathbb{F}}_p}(G_S,K_0,\sigma_{\chi,J})$. The action of this operator $\tau$ can be explicitly computed on the standard function $\varphi:=[1,f^J_{\chi}]$ that generates $\mathrm{ind}_{K_0}^{G_S}(\sigma_{\chi,J})$ as follows : \begin{equation}\label{action of tau for sigma non-trivial}
    \tau(\varphi)=\sum_{\lambda\in k_F^2}\begin{pmatrix}1 & A(\lambda)\\ 0 & 1\end{pmatrix}\alpha_0^{-1}\varphi,
\end{equation}whenever $\sigma_{\chi,J}\neq 1$, and \begin{equation}
    \tau(\varphi)=\sum_{\lambda\in k_F^2}\begin{pmatrix}1 & A(\lambda)\\ 0 & 1\end{pmatrix}\alpha_0^{-1}\varphi+\sum_{\mu\in k_F}\begin{pmatrix}1 & 0\\ \omega_FA(\mu) & 1\end{pmatrix}\alpha_0 \varphi,
\end{equation} whenever $\sigma_{\chi,J}=1$.

All the above results can be found in Section 3.2 of \cite{Abdellatif}. The explicit formula for the action of $\tau$ on $\varphi$ above is deduced from Corollaire 3.12 of \cite{Abdellatif}, which in turn is derived using the equation (\ref{action of a general hecke operator on a standard function}) of Subsection \ref{Generalities}. At last, we give the following definition motivated by the action of $\tau$.

\begin{defn}
    Let $\pi$ be a smooth representation of $G_S$. We define the map $$\mathcal{S}:=[v\mapsto \sum_{\lambda\in k_F^2}\begin{pmatrix}1 & A(\lambda)\\ 0 & 1\end{pmatrix}\alpha_0^{-1}\cdot v]:\pi\to \pi.$$
\end{defn}

\subsection{Iwahori-Hecke algebras} In this subsection we will recall some general structural results about the Iwahori-Hecke algebras. 

Let $\pi$ be a smooth representation of $G_S$. We know by Frobenius reciprocity that $$\mathrm{Hom}_{G_S}(\mathrm{ind}_{I_S(1)}^{G_S}(1),\pi)\simeq \pi^{I_S(1)}.$$ As a result, $\pi^{I_S(1)}$ carries a natural right action of $\mathcal{H}(G_S,I_S(1),1)$, the pro-$p$-Iwahori Hecke algebra. We have the following Proposition.

\begin{prop}\label{right action of pro p IHA}
    Let $\pi$ be a smooth representation of $G_S$, and $v\in \pi^{I_S(1)}$. Let $T_g\in \mathcal{H}(G_S,I_S(1),1)$ be the operator corresponding to the function $\Phi_g\in \mathbb{H}_{\bar{\mathbb{F}}_p}(G_S,I_S(1),1)$ which is supported on $I_S(1)gI_S(1)$ and such that $\Phi_g(g)=1$. Then, we have $$v\,|\,T_g=\sum_{i\in I_S(1)/(I_S(1)\cap (g^{-1}I_S(1)g))}ig^{-1}\cdot v.$$In particular, for $g\in \{w_0,w_0^{-1}\alpha_0^{-1}\}$, we have $$v\,|\,T_{w_0}=\sum_{\lambda\in k_F}\begin{pmatrix}
        1 & [\lambda]\\ 0 & 1
    \end{pmatrix}w_0^{-1}\cdot v\quad \text{and}\quad v\,|\,T_{w_0^{-1}\alpha_0^{-1}}=\sum_{\mu\in k_F}w_0\begin{pmatrix}
        1 & [\mu]\omega_F\\ 0 & 1
    \end{pmatrix}\alpha_0^{-1}\cdot v.$$ 
\end{prop}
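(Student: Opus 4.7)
The plan is to combine Frobenius reciprocity with the general formula \eqref{action of a general hecke operator on a standard function} for the action of a Hecke operator on a standard function, and then specialize by computing explicit coset representatives via Iwahori factorization.

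First, I would unwind the Frobenius reciprocity isomorphism $\mathrm{Hom}_{G_S}(\mathrm{ind}_{I_S(1)}^{G_S}(1),\pi)\simeq \pi^{I_S(1)}$. To a vector $v\in\pi^{I_S(1)}$ it associates the intertwiner $\varphi_v$ determined by $\varphi_v([1,1])=v$, and more generally $\varphi_v([h,1])=h\cdot v$ for $h\in G_S$ (this follows directly from the explicit inverse for Frobenius reciprocity recalled at the end of Section~\ref{Generalities}). Under this identification, the right action of $T_g$ on $v$ is realized as the composition $\varphi_v\circ T_g$, and the image $v\,|\,T_g\in\pi^{I_S(1)}$ is recovered by evaluating at $[1,1]$. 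Thus everything reduces to computing $T_g([1,1])$ inside $\mathrm{ind}_{I_S(1)}^{G_S}(1)$.

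Next I would apply formula \eqref{action of a general hecke operator on a standard function} with $H=I_S(1)$, $\sigma=1$, $g_0=g$, and $\Phi=\Phi_g$. Writing $I_S(1)g^{-1}I_S(1)=\bigsqcup_i k_ig^{-1}I_S(1)$ as a finite union of right cosets, the formula gives
\[
T_g([1,1])=\sum_i[k_ig^{-1},\,1],
\]
since $\Phi_g(g)=1$ and $\sigma$ is trivial. Applying $\varphi_v$ yields $v\,|\,T_g=\sum_i k_ig^{-1}\cdot v$. The standard bijection $I_S(1)g^{-1}I_S(1)/I_S(1)\leftrightarrow I_S(1)/(I_S(1)\cap g^{-1}I_S(1)g)$ shows that the $k_i$ may be taken as a system of representatives for the latter quotient, which is exactly the general formula in the statement.

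For the two specialized cases I would carry out the coset computation using the Iwahori decomposition $I_S(1)=U_S(\mathcal{O}_F)\cdot T_0\cdot \bar U_S(\mathfrak{p}_F)$. Conjugating matrix-by-matrix shows that $w_0^{-1}I_S(1)w_0$ replaces the upper unipotent part by $U_S(\mathfrak{p}_F)$ while leaving the lower part unchanged, so coset representatives for $I_S(1)/(I_S(1)\cap w_0^{-1}I_S(1)w_0)$ are $\bigl\{\begin{pmatrix}1 & [\lambda]\\ 0 & 1\end{pmatrix}:\lambda\in k_F\bigr\}$, giving the claimed formula for $T_{w_0}$. For $g=w_0^{-1}\alpha_0^{-1}$ the conjugate $g^{-1}I_S(1)g=\alpha_0w_0\,I_S(1)\,w_0^{-1}\alpha_0^{-1}$ shrinks the lower unipotent part to $\bar U_S(\mathfrak{p}_F^2)$ while expanding the upper part, so representatives may be taken as $\bigl\{\begin{pmatrix}1 & 0\\ [\mu]\omega_F & 1\end{pmatrix}:\mu\in k_F\bigr\}$. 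The general formula then gives $\sum_\mu \begin{pmatrix}1 & 0\\ [\mu]\omega_F & 1\end{pmatrix}\alpha_0w_0\cdot v$; a direct matrix multiplication checks this equals $\sum_\mu w_0\begin{pmatrix}1 & [\mu]\omega_F\\ 0 & 1\end{pmatrix}\alpha_0^{-1}\cdot v$ after the reindexing $\mu\mapsto -\mu$, using $-[\mu]=[-\mu]$ for the multiplicative lift.

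The core of the argument is routine once one is comfortable with formula \eqref{action of a general hecke operator on a standard function} and the identification via Frobenius reciprocity; the only delicate point is the coset bookkeeping for $g=w_0^{-1}\alpha_0^{-1}$, where one must be careful both with the Iwahori factorization on the correct side and with the cosmetic rearrangement needed to put the answer in the form stated.
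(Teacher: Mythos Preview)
Your proof is correct and follows essentially the same route as the paper's. The only cosmetic difference is that the paper invokes Proposition~6 of \cite{Barthel} to write the right action directly as $v\,|\,T_g=\sum_{g'\in I_S(1)\backslash G_S}\Phi_g(g')(g')^{-1}\cdot v$ and then decomposes the double coset, whereas you compute $T_g([1,1])$ via formula~\eqref{action of a general hecke operator on a standard function} first and then apply $\varphi_v$; the two are equivalent unwindings of the same identification, and your explicit check of the cosmetic rearrangement for $g=w_0^{-1}\alpha_0^{-1}$ (which the paper leaves implicit) is a nice addition.
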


\begin{proof}
    Using Proposition 6 of \cite{Barthel} (or by the explicit bijection between the convolution and endomorphism algebras, and Frobenius reciprocity, mentioned in subsection \ref{Generalities}), we have $$f\,|\,T_g=\sum_{g^{\prime}\in I_S(1)\backslash G_S}\Phi_g(g^{\prime})(g^{\prime})^{-1}\cdot f.$$Since the above sum is supported only on $I_S(1)gI_S(1)$, we decompose this double coset into $I_S(1)gi_1^{-1}\sqcup\cdots \sqcup I_S(1)gi_n^{-1}$. Note that $I_S(1)g i^{-1}=I_S(1)g j^{-1}$ if and only if $i^{-1}j\in g^{-1}I_S(1)g$, whence we can take $i_1,\dots,i_n$ as representatives of $I_S(1)/(I_S(1)\cap (g^{-1}I_S(1)g))$. These reductions allow us to write the above sum as $$f\,|\, T_g=\sum_{i\in I_S(1)/(I_S(1)\cap (g^{-1}I_S(1)g))}i g^{-1}\cdot f.$$ The formulas for $g=w_0$ and $g=w_0^{-1}\alpha_0^{-1}$ are obtained by noting that the Iwahori decomposition $I_S(1)=U_S(\mathcal{O}_F)\times T_S(1+\mathfrak{p}_F)\times \bar{U}_S(\mathfrak{p}_F)$ gives the following isomorphisms : $$I_S(1)/(I_S(1)\cap (w_0^{-1}I_S(1)w_0))\simeq U_S(\mathcal{O}_F)/U_S(\mathfrak{p}_F)\quad \text{and}\quad I_S(1)/(I_S(1)\cap (\alpha_0w_0I_S(1)w_0^{-1}\alpha_0^{-1}))\simeq \bar{U}_S(\mathfrak{p}_F)/\bar{U}_S(\mathfrak{p}_F^2).$$
\end{proof}

A similar result can be proved for Iwahori-Hecke algebras as well.

\begin{prop}\label{right action of IHA}
    Let $\pi$ be a smooth representation of $G_S$, and suppose $\chi$ is a smooth character of $I_S$. Let $g\in G_S$ be an element normalizing $T_S$. Let $v\in \pi^{(I_S,\chi)}$ (the subspace of elements on which $I_S$ acts by $\chi$, i.e. the $(I_S,\chi)$-isotypic component), and $T_g\in \mathcal{H}(G_S,I_S,\chi)$ be the operator corresponding to the function $\Phi_g\in \mathbb{H}_{\bar{\mathbb{F}}_p}(G_S,I_S,\chi)$ which is supported on $I_SgI_S$ and such that $\Phi_g(g)=1$. Then, we have $$v\,|\,T_g=\sum_{i\in I_S(1)/(I_S(1)\cap (g^{-1}I_S(1)g))}ig^{-1}\cdot v.$$In particular, for $g\in \{w_0,w_0^{-1}\alpha_0^{-1}\}$, we have $$v\,|\,T_{w_0}=\sum_{\lambda\in k_F}\begin{pmatrix}
        1 & [\lambda]\\ 0 & 1
    \end{pmatrix}w_0^{-1}\cdot v\quad \text{and}\quad v\,|\,T_{w_0^{-1}\alpha_0^{-1}}=\sum_{\mu\in k_F}w_0\begin{pmatrix}
        1 & [\mu]\omega_F\\ 0 & 1
    \end{pmatrix}\alpha_0^{-1}\cdot v.$$ 
\end{prop}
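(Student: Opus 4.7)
The plan is to mirror the proof of Proposition \ref{right action of pro p IHA}, using Frobenius reciprocity and the formula (\ref{action of a general hecke operator on a standard function}), and then to handle the twist by $\chi$ by carefully choosing coset representatives inside $I_S(1)$.

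First, Frobenius reciprocity identifies $\pi^{(I_S,\chi)}\cong \mathrm{Hom}_{G_S}(\mathrm{ind}_{I_S}^{G_S}(\chi),\pi)$, with $v$ corresponding to the unique $\varphi_v$ satisfying $\varphi_v([1,1])=v$. The right action of $\mathcal{H}(G_S,I_S,\chi)$ on $\pi^{(I_S,\chi)}$ is then given by $v\,|\,T=\varphi_v(T([1,1]))$. Applying (\ref{action of a general hecke operator on a standard function}) to the operator $T_g=T_{\Phi_g}$ (so $g_0=g$, $\Phi_g(g)=1$, $\sigma=\chi$), and using the decomposition $I_Sg^{-1}I_S=\bigsqcup_\alpha k_\alpha g^{-1}I_S$ with $k_\alpha$ running through representatives of $I_S/(I_S\cap g^{-1}I_Sg)$, one computes
\[
T_g([1,1])=\sum_{\alpha}[k_\alpha g^{-1},\chi(k_\alpha^{-1})]=\sum_{\alpha}\chi(k_\alpha^{-1})\,k_\alpha g^{-1}\cdot[1,1].
\]
Evaluating $\varphi_v$ gives $v\,|\,T_g=\sum_\alpha \chi(k_\alpha^{-1})\,k_\alpha g^{-1}\cdot v$.

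The heart of the argument is to replace these $I_S$-coset representatives by representatives inside $I_S(1)$, so that the character factors $\chi(k_\alpha^{-1})$ disappear (since $\chi$ is smooth, it is trivial on the pro-$p$ group $I_S(1)$, factoring through $I_S/I_S(1)\cong T_S(k_F)$). The hypothesis that $g$ normalizes $T_S$ enters here: it gives $T_S(\mathcal{O}_F)=g^{-1}T_S(\mathcal{O}_F)g\subseteq g^{-1}I_Sg$, and combined with the Iwahori factorization $I_S=I_S(1)\cdot T_S(\mathcal{O}_F)$, yields a surjection
\[
I_S(1)\twoheadrightarrow I_S/(I_S\cap g^{-1}I_Sg)
\]
with kernel $I_S(1)\cap g^{-1}I_Sg$. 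To conclude that this kernel equals $I_S(1)\cap g^{-1}I_S(1)g$, observe that for $i\in I_S(1)\cap g^{-1}I_Sg$, the conjugate $gig^{-1}$ topologically generates a pro-$p$ subgroup of $I_S$; since $I_S/I_S(1)\cong T_S(k_F)$ has order prime to $p$, the subgroup $I_S(1)$ is the unique maximal pro-$p$ subgroup of $I_S$, forcing $gig^{-1}\in I_S(1)$. This produces the canonical isomorphism $I_S(1)/(I_S(1)\cap g^{-1}I_S(1)g)\cong I_S/(I_S\cap g^{-1}I_Sg)$ appearing in the statement, and the general formula follows.

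For the two specific cases $g\in\{w_0,\,w_0^{-1}\alpha_0^{-1}\}$, the computation is now word-for-word that of Proposition \ref{right action of pro p IHA}: the Iwahori decomposition $I_S(1)=U_S(\mathcal{O}_F)\times T_S(1+\mathfrak{p}_F)\times \bar{U}_S(\mathfrak{p}_F)$ identifies the quotients with $U_S(\mathcal{O}_F)/U_S(\mathfrak{p}_F)$ and $\bar{U}_S(\mathfrak{p}_F)/\bar{U}_S(\mathfrak{p}_F^2)$ respectively, and these are represented by the unipotent matrices indexed by Teichmüller lifts $[\lambda]$, $\lambda\in k_F$. The main obstacle, such as it is, lies in the coset-reduction argument of the middle paragraph; the hypothesis that $g$ normalizes $T_S$ is essential there, and once that identification is established, the rest of the proof is a direct transcription of the pro-$p$-Iwahori case.
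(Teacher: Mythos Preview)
Your proof is correct and follows essentially the same strategy as the paper's: both reduce to the pro-$p$-Iwahori computation of Proposition~\ref{right action of pro p IHA} by exploiting the hypothesis that $g$ normalizes $T_S$. The paper phrases the reduction slightly differently---it notes $I_SgI_S=I_SgI_S(1)$ and argues that the summand $\Phi_g(g')(g')^{-1}\cdot f$ is unchanged upon replacing $g'$ by $tg'$ for $t\in T_S(\mathcal{O}_F^\times)$, thereby passing directly to a sum supported on $I_S(1)gI_S(1)$---whereas you argue that one may choose the $I_S/(I_S\cap g^{-1}I_Sg)$ representatives inside $I_S(1)$ so that the character factors $\chi(k_\alpha^{-1})$ vanish; your additional observation that $I_S(1)\cap g^{-1}I_Sg=I_S(1)\cap g^{-1}I_S(1)g$ (via the maximality of $I_S(1)$ as a pro-$p$ subgroup) makes the coset identification fully explicit, which the paper leaves implicit in its phrase ``the remaining proof is the same.''
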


\begin{proof}
    Here, we only point out some further reductions. At first, note that $\Phi_g$ is supported on $I_SgI_S=I_SgI_S(1)$. Now, using Proposition 6 of \cite{Barthel} (or by the explicit bijection between the convolution and endomorphism algebras, and Frobenius reciprocity, mentioned in subsection \ref{Generalities}), we have $$f\,|\,T_g=\sum_{g^{\prime}\in I_S\backslash G_S}\Phi_g(g^{\prime})(g^{\prime})^{-1}\cdot f=\sum_{g^{\prime}\in I_S(1)\backslash G_S}\Phi_g(g^{\prime})(g^{\prime})^{-1}\cdot f.$$ Here, the second equality follows from the fact that $\Phi_g(tg^{\prime})(tg^{\prime})^{-1}\cdot f=\Phi_g(g^{\prime})(g^{\prime})^{-1}\cdot f$ for all $t\in T_S(\mathcal{O}_F^{\times})$. Therefore, the above sum is supported on $I_S(1)gI_S(1)$. The remaining proof is same as that of the pro-$p$-Iwahori Hecke algebra case.
\end{proof} 

\begin{defn}
    We define the following maps : $$\mathcal{S}_1:=[v\mapsto \sum_{\lambda\in k_F}\begin{pmatrix}1 & [\lambda]\\ 0 & 1\end{pmatrix}w_0^{-1}\cdot v]: \pi\to \pi$$ and $$\mathcal{S}_2:=[v\mapsto \sum_{\mu\in k_F}w_0\begin{pmatrix}1 & [\mu]\omega_F\\ 0 & 1\end{pmatrix}\alpha_0^{-1}\cdot v]: \pi\to \pi.$$
\end{defn}
We write down the following Lemma whose proof is now clear.
\begin{lm}\label{S_1 and S_2 preserve I_S(1) invarinats}
    Let $\pi$ be a smooth representation of $G_S$. If $v\in \pi^{I_S(1)}$, then $\mathcal{S}_1v,\,\mathcal{S}_2v\in \pi^{I_S(1)}$, and hence we have $\mathcal{S}v=(\mathcal{S}_1\circ \mathcal{S}_2)(v)\in \pi^{I_S(1)}$.
\end{lm}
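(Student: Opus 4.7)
The plan is to recognize the maps $\mathcal{S}_1$ and $\mathcal{S}_2$ as the right action of pro-$p$-Iwahori Hecke operators on the space of $I_S(1)$-invariants. By the discussion preceding Proposition \ref{right action of pro p IHA}, Frobenius reciprocity identifies $\pi^{I_S(1)}$ with $\mathrm{Hom}_{G_S}(\mathrm{ind}_{I_S(1)}^{G_S}(1),\pi)$, which carries a natural right action of the pro-$p$-Iwahori Hecke algebra $\mathcal{H}(G_S,I_S(1),1)$. Comparing the explicit formulas in Proposition \ref{right action of pro p IHA} with the definitions of $\mathcal{S}_1$ and $\mathcal{S}_2$ gives $\mathcal{S}_1(v) = v\,|\,T_{w_0}$ and $\mathcal{S}_2(v) = v\,|\,T_{w_0^{-1}\alpha_0^{-1}}$ for any $v \in \pi^{I_S(1)}$. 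Since the Hecke action stays inside $\pi^{I_S(1)}$, both $\mathcal{S}_1 v$ and $\mathcal{S}_2 v$ are $I_S(1)$-fixed.

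It remains to verify the composition identity $\mathcal{S}v = (\mathcal{S}_1 \circ \mathcal{S}_2)(v)$, which follows by a short direct calculation: writing out the composite gives
\[(\mathcal{S}_1\circ\mathcal{S}_2)(v) = \sum_{\lambda,\mu\in k_F}\begin{pmatrix}1 & [\lambda]\\ 0 & 1\end{pmatrix}w_0^{-1}w_0\begin{pmatrix}1 & [\mu]\omega_F\\ 0 & 1\end{pmatrix}\alpha_0^{-1}\cdot v = \sum_{\lambda,\mu\in k_F}\begin{pmatrix}1 & [\lambda]+[\mu]\omega_F\\ 0 & 1\end{pmatrix}\alpha_0^{-1}\cdot v,\]
and this matches $\mathcal{S}v$ once we observe that for $(\lambda,\mu) \in k_F^2$ one has $A(\lambda,\mu) = [\lambda] + [\mu]\omega_F$ by the definition in Subsection \ref{notations}, so the double sum over $k_F \times k_F$ is exactly the single sum over $k_F^2$ in the definition of $\mathcal{S}$.

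Once these two points are in place, the final sentence of the lemma follows immediately: $\mathcal{S}v = \mathcal{S}_1(\mathcal{S}_2 v)$ lies in $\pi^{I_S(1)}$ because $\mathcal{S}_2 v \in \pi^{I_S(1)}$ by the first claim, and then $\mathcal{S}_1$ applied to an $I_S(1)$-invariant vector is again $I_S(1)$-invariant. There is no real obstacle here; the only content is the identification with Hecke operators and the short matrix computation. One could alternatively give a purely hands-on verification by checking invariance under the three standard generators of $I_S(1)$ (coming from the Iwahori decomposition $I_S(1) = U_S(\mathcal{O}_F) \times T_S(1+\mathfrak{p}_F) \times \bar{U}_S(\mathfrak{p}_F)$), but routing the argument through Proposition \ref{right action of pro p IHA} avoids that bookkeeping entirely.
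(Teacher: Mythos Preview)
Your proposal is correct and follows exactly the approach the paper intends: the paper's proof is simply ``the proof is now clear,'' referring back to Proposition~\ref{right action of pro p IHA}, which identifies $\mathcal{S}_1$ and $\mathcal{S}_2$ as the right action of $T_{w_0}$ and $T_{w_0^{-1}\alpha_0^{-1}}$ on $\pi^{I_S(1)}$. Your explicit verification of the composition identity $\mathcal{S}=\mathcal{S}_1\circ\mathcal{S}_2$ via $A(\lambda,\mu)=[\lambda]+[\mu]\omega_F$ is a welcome addition that the paper leaves implicit.
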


\subsection{Action of $\mathcal{H}(G_S,I_S,\chi)$ on $(\mathrm{ind}_{K_0}^{G_S}(\sigma))^{(I_S,\chi)}$} At first, we recall some basic results about the $(I_S,\chi)$-isotypic component of $\mathrm{ind}_{K_0}^{G_S}(\sigma)$. Let $\sigma$ be a weight of $K_0$, and $f_n\in (\mathrm{ind}_{K_0}^{G_S}(\sigma))^{I_S(1)}$ denote the function, supported on $K_0\alpha_0^{-n} I_S(1)$ (note that $G_S=\sqcup_{n\in \mathbb{Z}}K_0\alpha_0^{-n} I_S(1)$; see Proposition 3.32 in \cite{Abdellatif}), which is defined as follows : $$f_n(\alpha_0^{-n}):=\begin{cases}
    w_0\cdot v_{\sigma}, & \: n>0\\
    v_{\sigma}, & \: n\leq 0
\end{cases}.$$ Here, $v_{\sigma}\in \sigma^{I_S(1)}$ is the unique vector that generates $\sigma$ (see Carter-Lusztig theory i.e. Theorem \ref{Carter-Lusztig thory}). We recall the following. 

\begin{prop}\label{action of I_S on pro-p invariant basis}
    \begin{enumerate}
        \item The family of functions $(f_n\,|\,n\in \mathbb{Z})$ forms an $\bar{\mathbb{F}}_p$-basis of $(\mathrm{ind}_{K_0}^{G_S}(\sigma))^{I_S(1)}$.

        \item The Iwahori subgroup $I_S$ acts on $f_n$ as follows : 
        $$i\cdot f_n=\begin{cases}
            \chi_{\sigma}(i)f_n, &\: n\leq 0\\
            \chi_{\sigma}^{w_0}(i)f_n, &\: n>0
        \end{cases},$$where $i\in I_S$, and $\chi_{\sigma}$ denotes the character by which $I_S$ acts on $\bar{\mathbb{F}}_p\cdot v_{\sigma}$.
    \end{enumerate}
\end{prop}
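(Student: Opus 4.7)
For part (1), my plan is to use the Cartan-like decomposition $G_S=\bigsqcup_{n\in \mathbb{Z}}K_0 \alpha_0^{-n} I_S(1)$ cited from Proposition 3.32 of \cite{Abdellatif}. Any $f\in (\mathrm{ind}_{K_0}^{G_S}(\sigma))^{I_S(1)}$ has compact support modulo $K_0$, hence meets only finitely many of these double cosets; and on each $K_0 \alpha_0^{-n} I_S(1)$, the restriction of $f$ is determined by the single value $f(\alpha_0^{-n})$ via left $K_0$-equivariance and right $I_S(1)$-invariance. So I am reduced to identifying, via evaluation at $\alpha_0^{-n}$, the space of $I_S(1)$-invariant functions supported on $K_0\alpha_0^{-n}I_S(1)$ with $\sigma^{K_0\cap \alpha_0^{-n} I_S(1) \alpha_0^n}$, and to showing this fixed space is one-dimensional with the claimed generator. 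A direct $2\times 2$ matrix computation shows that the image of $K_0\cap \alpha_0^{-n} I_S(1) \alpha_0^n$ in $K_0/K_0(1)\cong \mathrm{SL}_2(k_F)$ equals $U(k_F)$ for $n\leq 0$ and $\bar{U}(k_F)=w_0 U(k_F)w_0^{-1}$ for $n>0$. Carter-Lusztig theory (Theorem \ref{Carter-Lusztig thory}) then gives $\sigma^{U(k_F)}=\bar{\mathbb{F}}_p\cdot v_\sigma$, and conjugation by $w_0$ yields $\sigma^{\bar{U}(k_F)}=\bar{\mathbb{F}}_p\cdot w_0 v_\sigma$; these match the defining values of $f_n$. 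Linear independence is immediate from disjoint supports.

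For part (2), normality of $I_S(1)$ in $I_S$ ensures that $i\cdot f_n$ is again $I_S(1)$-invariant and supported on $K_0\alpha_0^{-n}I_S(1)$, so by the one-dimensionality from (1) I only need to compute $(i\cdot f_n)(\alpha_0^{-n})=f_n(\alpha_0^{-n}i)$. Write $i=ti'$ with $t\in T_S(\mathcal{O}_F^{\times})$ and $i'\in I_S(1)$, and then $i'=ut_1\bar{u}$ via the Iwahori decomposition of $I_S(1)$, with $u\in U_S(\mathcal{O}_F)$, $t_1\in T_S(1+\mathfrak{p}_F)$, $\bar{u}\in \bar{U}_S(\mathfrak{p}_F)$. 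The plan is to commute $t$ past $\alpha_0^{-n}$ and rearrange the rest as $(\text{factor in }K_0)\cdot \alpha_0^{-n}\cdot (\text{factor in }I_S(1))$. For $n\leq 0$, the whole conjugate $\alpha_0^{-n}i'\alpha_0^n$ already lies in $K_0$ with image in $U(k_F)$; this fixes $v_\sigma$, and only $t$ contributes, yielding $\chi_\sigma(i)v_\sigma$. For $n>0$, only $\alpha_0^{-n}u\alpha_0^n\in U_S(\mathfrak{p}_F)\subseteq K_0(1)$ stays in $K_0$, so I would leave $\bar{u}$ on the right of $\alpha_0^{-n}$ to absorb into the $I_S(1)$-factor; the identity $\sigma(t)w_0 v_\sigma=w_0\sigma(w_0^{-1}tw_0)v_\sigma=\chi_\sigma^{w_0}(t)\,w_0 v_\sigma$ then produces $\chi_\sigma^{w_0}(i)f_n(\alpha_0^{-n})$.

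The only subtle point is the case $n>0$: the naive attempt to conjugate all of $i'$ across $\alpha_0^{-n}$ fails because the $\bar{U}_S(\mathfrak{p}_F)$-factor is pushed out of $K_0$. The asymmetric rearrangement that leaves $\bar{u}$ on the right is precisely what forces the $w_0$-twist in the resulting character, mirroring the fact that $f_n(\alpha_0^{-n})$ for $n>0$ must be a $\bar{U}$-fixed vector rather than a $U$-fixed one.
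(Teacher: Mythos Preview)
Your argument for part (1) is correct and is essentially the standard proof (the paper itself does not prove this proposition but only cites Proposition 3.33 of \cite{Abdellatif}). The identification of the image of $K_0\cap \alpha_0^{-n}I_S(1)\alpha_0^{n}$ in $\mathrm{SL}_2(k_F)$ with $U(k_F)$ for $n\leq 0$ and with $\bar U(k_F)=w_0U(k_F)w_0^{-1}$ for $n>0$, combined with the one-dimensionality of the respective fixed spaces from Carter--Lusztig, is exactly what is needed.

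In part (2), however, there is a slip in the case $n<0$. Your assertion that ``the whole conjugate $\alpha_0^{-n}i'\alpha_0^{n}$ already lies in $K_0$'' fails when $n<0$: for $i'\in I_S(1)$ with upper-right entry $b\in\mathcal{O}_F$, the conjugate has upper-right entry $b\,\omega_F^{2n}$, which for $n<0$ need not lie in $\mathcal{O}_F$. (It is the \emph{lower-left} entry $c\,\omega_F^{-2n}\in\mathfrak{p}_F^{1-2n}\subseteq\mathfrak{p}_F$ that is contracted, not the upper-right.) The mirror of what you correctly did for $n>0$ would work here---use the Iwahori factorisation in the order $i'=\bar u'\,t_1'\,u'$ and leave $u'\in U_S(\mathcal{O}_F)\subseteq I_S(1)$ on the right---but in fact there is a much shorter route once part (1) is established. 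Since $f_n$ is already $I_S(1)$-invariant and $t\in T_S(\mathcal{O}_F^\times)$ commutes with $\alpha_0^{-n}$, one has directly
\[
(i\cdot f_n)(\alpha_0^{-n})=f_n(\alpha_0^{-n}\,t\,i')=f_n(t\,\alpha_0^{-n}\,i')=\sigma(t)\,f_n(\alpha_0^{-n}),
\]
and it only remains to compute $\sigma(t)v_\sigma=\chi_\sigma(t)v_\sigma$ for $n\leq 0$ and $\sigma(t)(w_0 v_\sigma)=\chi_\sigma^{w_0}(t)\,w_0 v_\sigma$ for $n>0$, exactly as you wrote at the end. No rearrangement of $i'$ is needed at all.
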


\begin{proof}
    See Proposition 3.33 of \cite{Abdellatif}.
\end{proof}

Now, let $\chi$ be a character of $I_S$. Then $\chi|_{I_S(1)}=1$ as $I_S(1)$ is pro-$p$. We can therefore consider $\chi$ as a character of $I_S/I_S(1)\simeq k_F^{\times}$. Consider the isotypic component $(\mathrm{ind}_{K_0}^{G_S}(\sigma))^{(I_S,\chi)}$; let $f$ be a non-zero function in this subspace. Then, $f$ is fixed by $I_S(1)$. So, we write $f=\sum_{n}\lambda_nf_n$, where $\lambda_n$'s are zero for all but finitely many $n$'s. The fact that $I_S$ acts on $f$ by $\chi$ implies, for every $i\in I_S$, we have $$\sum_{n}\lambda_n\chi(i)f_n=\chi(i)f=i\cdot f=\sum_{n}\lambda_n (i\cdot f_n)=\sum_{n}\lambda_n\chi_n(i)f_n,$$ where, by Proposition \ref{action of I_S on pro-p invariant basis}, $\chi_n(i)=\chi_{\sigma}(i)$ if $n\leq 0$ or $\chi(i)=\chi^{w_0}_{\sigma}(i)$ if $n>0$. Since $\lambda_n$ is non-zero for some $n$, and $i\in I_S$ was arbitrary, we have $\chi=\chi_{\sigma}$ or $\chi=\chi_{\sigma}^{w_0}$. We have shown the following.

\begin{prop}\label{ness and suff condition for isotypic comp to be non-zero}
    The isotypic component $(\mathrm{ind}_{K_0}^{G_S}(\sigma))^{(I_S,\chi)}$ is non-zero if and only if $\chi=\chi_{\sigma}$ or $\chi=\chi_{\sigma}^{w_0}$.
\end{prop}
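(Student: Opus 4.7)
The plan is to reduce to the $I_S(1)$-fixed subspace and then use the explicit basis from Proposition \ref{action of I_S on pro-p invariant basis}. Since $I_S(1)$ is pro-$p$ and $\chi$ takes values in $\bar{\mathbb{F}}_p^{\times}$, the restriction $\chi|_{I_S(1)}$ is trivial, so any vector in the $(I_S,\chi)$-isotypic component lies in $(\mathrm{ind}_{K_0}^{G_S}(\sigma))^{I_S(1)}$. By Proposition \ref{action of I_S on pro-p invariant basis}(1), this $I_S(1)$-invariant subspace has the $\bar{\mathbb{F}}_p$-basis $(f_n)_{n\in\mathbb{Z}}$, and every calculation can be pushed through this basis.

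For the forward direction, I would take a nonzero $f=\sum_n\lambda_nf_n$ in the isotypic component and apply an arbitrary $i\in I_S$ to both sides. The identity $\chi(i)f=i\cdot f$ combined with Proposition \ref{action of I_S on pro-p invariant basis}(2), which says that $i$ acts on $f_n$ by $\chi_{\sigma}(i)$ for $n\leq 0$ and by $\chi_{\sigma}^{w_0}(i)$ for $n>0$, together with the linear independence of the $f_n$, forces $\chi(i)=\chi_{\sigma}(i)$ for every $n\leq 0$ with $\lambda_n\neq 0$ and $\chi(i)=\chi_{\sigma}^{w_0}(i)$ for every $n>0$ with $\lambda_n\neq 0$. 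Since $i$ was arbitrary and some $\lambda_n$ is nonzero, $\chi$ must coincide with $\chi_{\sigma}$ or with $\chi_{\sigma}^{w_0}$.

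The backward direction is immediate from the same Proposition: $f_0$ is itself an $(I_S,\chi_{\sigma})$-eigenvector, and $f_1$ is an $(I_S,\chi_{\sigma}^{w_0})$-eigenvector, so each candidate character is actually realized by a nonzero function in the compact induction.

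There is no real obstacle here; the argument is a direct unwinding of Proposition \ref{action of I_S on pro-p invariant basis}. The only subtlety worth noting is the degenerate case where $\chi_{\sigma}=\chi_{\sigma}^{w_0}$ (which occurs, for example, when $\sigma=1$): the two conditions collapse into one, the entire $I_S(1)$-fixed subspace becomes a single isotypic component, and the ``if and only if'' still holds trivially.
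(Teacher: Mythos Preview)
Your proposal is correct and follows essentially the same approach as the paper: the paper's argument (given in the paragraph immediately preceding the proposition) also reduces to the $I_S(1)$-invariants via the pro-$p$ property, expands a nonzero isotypic vector in the basis $(f_n)$ of Proposition~\ref{action of I_S on pro-p invariant basis}, and compares coefficients after acting by $i\in I_S$. Your explicit mention of the backward direction (via $f_0$ and $f_1$) and the degenerate case $\chi_{\sigma}=\chi_{\sigma}^{w_0}$ make the argument slightly more complete than the paper's presentation, but the method is identical.
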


As an immediate consequence of the above computation, we also obtain the following.

\begin{prop}\label{basis of I_S,chi invariants}
    \begin{enumerate}
        \item If $\chi_{\sigma}=\chi_{\sigma}^{w_0}$, then $(\mathrm{ind}_{K_0}^{G_S}(\sigma))^{(I_S,\chi_{\sigma})}=(\mathrm{ind}_{K_0}^{G_S}(\sigma))^{I_S(1)}=\langle f_n\,|\,n\in \mathbb{Z}\rangle$.

        \item If $\chi_{\sigma}\neq \chi_{\sigma}^{w_0}$, then $(\mathrm{ind}_{K_0}^{G_S}(\sigma))^{(I_S,\chi_{\sigma})}=\langle f_n\,|\, n\leq 0\rangle$, and $(\mathrm{ind}_{K_0}^{G_S}(\sigma))^{(I_S,\chi_{\sigma}^{w_0})}=\langle f_n\,|\, n> 0\rangle$.
    \end{enumerate}
\end{prop}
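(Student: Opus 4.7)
The plan is to leverage the computation that immediately precedes the proposition, which already establishes the ``only if'' direction of Proposition \ref{ness and suff condition for isotypic comp to be non-zero} and in the process gives essentially everything needed here. My proof will simply make the remaining bookkeeping explicit by isolating the two cases on $\chi_\sigma$ vs.\ $\chi_\sigma^{w_0}$.

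First I would record the containment $(\mathrm{ind}_{K_0}^{G_S}(\sigma))^{(I_S,\chi)}\subset (\mathrm{ind}_{K_0}^{G_S}(\sigma))^{I_S(1)}$, which holds because $I_S(1)\subset I_S$ and $\chi|_{I_S(1)}=1$ (as $I_S(1)$ is pro-$p$ and $\chi$ valued in $\bar{\mathbb{F}}_p^\times$). Therefore by Proposition \ref{action of I_S on pro-p invariant basis}(1), any element $f$ in the isotypic component admits a unique expansion $f=\sum_{n\in \mathbb{Z}}\lambda_n f_n$ with finitely many non-zero terms. Applying an arbitrary $i\in I_S$ and using Proposition \ref{action of I_S on pro-p invariant basis}(2) gives
\begin{equation*}
\sum_{n\leq 0}\lambda_n\bigl(\chi(i)-\chi_\sigma(i)\bigr)f_n+\sum_{n>0}\lambda_n\bigl(\chi(i)-\chi_\sigma^{w_0}(i)\bigr)f_n=0.
\end{equation*}
Since the $f_n$ are linearly independent, each coefficient must vanish for every $i\in I_S$.

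In case (1), where $\chi_\sigma=\chi_\sigma^{w_0}$, the system of equations is trivially satisfied when $\chi=\chi_\sigma$, so every $I_S(1)$-invariant vector is automatically $(I_S,\chi_\sigma)$-isotypic; combined with the containment above, this gives the chain of equalities in the statement. In case (2), where $\chi_\sigma\neq \chi_\sigma^{w_0}$, choose $i\in I_S$ with $\chi_\sigma(i)\neq \chi_\sigma^{w_0}(i)$. If $\chi=\chi_\sigma$, the first sum vanishes automatically while the second forces $\lambda_n=0$ for all $n>0$, and conversely every $\sum_{n\leq 0}\lambda_n f_n$ visibly lies in the $(I_S,\chi_\sigma)$-isotypic component by Proposition \ref{action of I_S on pro-p invariant basis}(2). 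The symmetric argument with $\chi=\chi_\sigma^{w_0}$ yields the second identity.

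There is no real obstacle here; the proposition is essentially a restatement of what was already derived in the paragraph justifying Proposition \ref{ness and suff condition for isotypic comp to be non-zero}, and the only thing worth emphasizing is the \emph{linear independence} step, which lets us pass from the single identity $\chi(i)\lambda_n f_n=\chi_n(i)\lambda_n f_n$ (summed) to the per-coordinate vanishing. Accordingly the ``proof'' can be written in a few lines, and the natural presentation is to treat cases (1) and (2) in parallel after setting up the expansion in the common $I_S(1)$-invariant basis.
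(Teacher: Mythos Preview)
Your proposal is correct and follows exactly the approach the paper intends: the paper introduces this proposition with the phrase ``As an immediate consequence of the above computation, we also obtain the following'' and gives no further proof, so you have simply made explicit the bookkeeping (expansion in the $f_n$, linear independence, and the case split on $\chi_\sigma$ versus $\chi_\sigma^{w_0}$) that the paper leaves to the reader.
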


Next, we compute the right action of $\mathcal{H}(G_S,I_S,\chi)$ on $(\mathrm{ind}_{K_0}^{G_S}(\sigma))^{(I_S,\chi)}$. For the isotypic component to be non-zero we take $\chi\in \{\chi_{\sigma},\chi_{\sigma}^{w_0}\}$. 

At first, we consider the situation when $\chi=\chi_{\sigma}=\chi_{\sigma}^{w_0}$.

\begin{prop}\label{action of degenerate IHA on isoty-comp of cpt ind}
    Let $\chi_{\sigma}=\chi_{\sigma}^{w_0}$, so that $(\mathrm{ind}_{K_0}^{G_S}(\sigma))^{(I_S,\chi_{\sigma})}=\langle f_n\,|\,n\in \mathbb{Z}\rangle$. Consider the operators $T_{w_0},\,T_{w_0^{-1}\alpha_0^{-1}}\in \mathcal{H}(G_S,I_S,\chi_{\sigma})$. Then, we have the following : 
    \begin{enumerate}
        \item Let $n\geq 0$, then $$f_{-n}\,|\,T_{w_0}=a_{-n}\cdot f_{-n}\:\text{ and }\:
         f_{-n}\,|\,T_{w_0^{-1}\alpha_0^{-1}}=f_{n+1}.$$

        \item Let $n\geq 1$, then $$f_n\,|\,T_{w_0}=f_{-n}\:\text{ and }\: f_n\,|\,T_{w_0^{-1}\alpha_0^{-1}}=b_{n}\cdot f_n.$$
        Where $a_n$'s and $b_n$'s are scalars.
    \end{enumerate}
\end{prop}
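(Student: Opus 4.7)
The plan is direct computation with the formulas of Proposition~\ref{right action of IHA}. Since both $T_{w_0}$ and $T_{w_0^{-1}\alpha_0^{-1}}$ lie in $\mathcal{H}(G_S,I_S,\chi_{\sigma})$, they preserve the isotypic subspace $(\mathrm{ind}_{K_0}^{G_S}(\sigma))^{(I_S,\chi_{\sigma})}=\langle f_m\mid m\in\mathbb{Z}\rangle$ given by Proposition~\ref{basis of I_S,chi invariants}(1). Thus each $f_n\,|\,T$ is a finite linear combination $\sum_m c_m f_m$ in this basis, and the statement reduces to locating the nonzero coefficients.

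The first step is a support analysis. Using $\mathrm{Supp}(h\cdot f)=\mathrm{Supp}(f)\cdot h^{-1}$ together with Proposition~\ref{right action of IHA}, one obtains $\mathrm{Supp}(f_n\,|\,T_{w_0})\subseteq K_0\alpha_0^{-n}I_S(1)\,w_0\,I_S(1)$, and an analogous containment for the other operator. Combining the torus-inversion identity $w_0\alpha_0^{k}=\alpha_0^{-k}w_0$ with the Bruhat factorization $u_\lambda w_0 = \bar{u}_{[\lambda]^{-1}}\,t_{[\lambda]}\,u_{-[\lambda]^{-1}}$ (valid when $\lambda\neq 0$), I would check in each of the four cases that the support of $f_n\,|\,T$ lies in the support of a single basis vector $f_m$ relative to the tiling $G_S=\bigsqcup_{k\in\mathbb{Z}}K_0\alpha_0^{-k}I_S(1)$ of Proposition 3.32 of \cite{Abdellatif}. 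This forces $f_n\,|\,T$ to be a scalar multiple of the claimed $f_m$ and settles the shape of all four formulas.

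To see that the two scalars asserted by the statement actually equal $1$, I would evaluate at a single representative point. For $f_n\,|\,T_{w_0}=f_{-n}$ with $n\geq 1$, evaluate at $\alpha_0^n$: the Bruhat factorization above shows that for $\lambda\neq 0$ the element $\alpha_0^n u_\lambda w_0^{-1}$ lands in $K_0\alpha_0^n I_S(1)$, which is disjoint from $\mathrm{Supp}(f_n)=K_0\alpha_0^{-n}I_S(1)$, so only the $\lambda=0$ summand of $\sum_\lambda f_n(\alpha_0^n u_\lambda w_0^{-1})$ contributes. Using $\alpha_0^n w_0^{-1}=w_0^{-1}\alpha_0^{-n}=-w_0\alpha_0^{-n}$ together with the fact that $-I$ acts on $\sigma$ by $\chi_\sigma(-1)\in\{\pm 1\}$, the surviving summand equals $\sigma(-w_0)(w_0\cdot v_\sigma)=\chi_\sigma(-1)^2\,v_\sigma=v_\sigma=f_{-n}(\alpha_0^n)$, so the scalar is indeed $1$. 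The identity $f_{-n}\,|\,T_{w_0^{-1}\alpha_0^{-1}}=f_{n+1}$ is handled by the same pattern with $\mu=0$ as the unique surviving index. Finally, the scalars $a_{-n}$ and $b_n$ of the "diagonal" cases are simply defined as the resulting finite sums $\sum_\lambda f_{-n}(\alpha_0^n u_\lambda w_0^{-1})$ and $\sum_\mu f_n(\alpha_0^{-n}w_0 v_\mu \alpha_0^{-1})$; their explicit values are not needed for the statement.

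The main obstacle is the support bookkeeping in Step 1: one must carefully distinguish $K_0\alpha_0^{n}I_S(1)$ from $K_0\alpha_0^{-n}I_S(1)$ for $n\geq 1$, since these sit inside the same Cartan double coset $K_0\alpha_0^n K_0$, and verify that the $k_F$-family of matrices $\alpha_0^{\pm n} u_\lambda w_0^{-1}$ concentrates in exactly one of these pieces after $K_0$-absorption on the left and $I_S(1)$-absorption on the right. The degeneracy hypothesis $\chi_\sigma=\chi_\sigma^{w_0}$ does not enter the support analysis directly, but it is precisely what ensures that each $f_n\,|\,T$ remains in $\langle f_m\mid m\in\mathbb{Z}\rangle$ rather than escaping into the other isotypic component via the $w_0$-twist.
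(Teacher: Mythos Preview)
Your overall strategy matches the paper's: reduce everything to support analysis for the set decomposition $G_S=\bigsqcup_{k}K_0\alpha_0^{-k}I_S(1)$ and then evaluate at a representative. For the two ``off--diagonal'' formulas ($f_n\,|\,T_{w_0}=f_{-n}$ for $n\geq 1$ and $f_{-n}\,|\,T_{w_0^{-1}\alpha_0^{-1}}=f_{n+1}$ for $n\geq 0$) this is exactly right: the support genuinely collapses to a single cell, and your evaluation at $\alpha_0^{\pm n}$ with the $\lambda=0$ (resp.\ $\mu=0$) term surviving is the paper's computation.

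The gap is in the two ``diagonal'' cases. Your claim that the support of $f_{-n}\,|\,T_{w_0}$ (for $n\geq 1$) lies in a \emph{single} cell is false: the set $K_0\alpha_0^{n}I_S(1)w_0U_S(\mathcal{O}_F)$ meets both $K_0\alpha_0^{n}I_S(1)$ and $K_0\alpha_0^{-n}I_S(1)$, as you can see already from the $\lambda=0$ term, where $\alpha_0^{n}w_0=w_0\alpha_0^{-n}\in K_0\alpha_0^{-n}$. So a priori $f_{-n}\,|\,T_{w_0}=a_{-n}f_{-n}+c\,f_{n}$, and defining $a_{-n}$ by evaluation at $\alpha_0^{n}$ says nothing about $c$. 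The paper kills $c$ by evaluating at $\alpha_0^{-n}$ and observing that
\[
(f_{-n}\,|\,T_{w_0})(\alpha_0^{-n})=\sum_{\lambda\in k_F} w_0^{-1}\cdot v_{\sigma}=q\cdot (w_0^{-1}\cdot v_{\sigma})=0,
\]
a characteristic-$p$ cancellation that your Bruhat-factorization bookkeeping cannot replace. Exactly the same issue arises for $f_{n}\,|\,T_{w_0^{-1}\alpha_0^{-1}}$ with $n\geq 1$: the support lies in $K_0\alpha_0^{n-1}I_S(1)\cup K_0\alpha_0^{-n}I_S(1)$, and one must evaluate at $\alpha_0^{n-1}$ and again use $\sum_{\mu\in k_F}v_{\sigma}=0$ to rule out an $f_{-(n-1)}$ component. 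You anticipated the difficulty of separating $K_0\alpha_0^{n}I_S(1)$ from $K_0\alpha_0^{-n}I_S(1)$ inside the same Cartan class, but the resolution is not finer support analysis; it is this extra vanishing computation.
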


\begin{proof}
    We use Proposition \ref{right action of pro p IHA}. First we compute the functions $f_n\,|\,T_{w_0}$ for $n\geq 1$. Then, $x\in \mathrm{Supp}(f_n\,|\,T_{w_0})\implies xuw_0^{-1}\in  K_0\alpha_0^{-n}I_S(1)$ for some $u\in U_S(\mathcal{O}_F)$. So, the support of $f_n\,|\,T_{w_0}$ is contained in $K_0\alpha_0^{-n}I_S(1)w_0U_S(\mathcal{O}_F)=K_0\alpha_0^{n}I_S(1)$. Hence, we compute $$(f_n\,|\,T_{w_0})(\alpha_0^{n})=\sum_{\lambda\in k_F}f_n(\alpha_0^{n}\begin{pmatrix}
        1 & [\lambda]\\ 0 & 1
    \end{pmatrix}w_0^{-1})=f_n(\alpha_0^{n}w_0^{-1})=w_0^{-1}\cdot f_n(\alpha_0^{-n})=v_{\sigma},$$ where the second equality follows from the fact that $\alpha_0^{n}\begin{pmatrix}
        1 & [\lambda]\\ 0 & 1
    \end{pmatrix}w_0\in K_0\alpha_0^nI_S(1)$ for $\lambda\neq 0$; this is easy to prove by elementary row and column elimination techniques. We conclude that, for $n\geq 1$, we have $$f_n\,|\,T_{w_0}=f_{-n}.$$
    Now, we consider $f_{-n}\,|\,T_{w_0}$ for $n\geq 1$. At first, we note that its support is contained in $$K_0\alpha_0^nI_S(1)w_0U_S(\mathcal{O}_F)=K_0\alpha_0^nU_S(\mathcal{O}_F)w_0U_S(\mathcal{O}_F)\subset K_0\alpha_0^{-n}I_S(1)\cup K_0\alpha_0^nI_S(1),$$where the containment follows by computing the normal form of a matrix of the form $$\begin{pmatrix}\omega_F^{-n} & 0\\ 0 & \omega_F^n\end{pmatrix}\begin{pmatrix}1 & x\\ 0 & 1\end{pmatrix}\begin{pmatrix}0 & -1\\ 1 & 0\end{pmatrix}$$depending of the valuation of $x$ being $1$ or $\geq 1$. So, we compute 
    \begin{equation*}
        \begin{split}
            (f_{-n}\,|\,T_{w_0})(\alpha_0^{-n})&=\sum_{\lambda\in k_F}f_{-n}(\alpha_0^{-n}\begin{pmatrix}1 & [\lambda]\\ 0 & 1\end{pmatrix}w_0^{-1})=\sum_{\lambda\in k_F}\begin{pmatrix}1 & [\lambda]\omega_F^{2n}\\ 0 & 1\end{pmatrix}w_0^{-1}\cdot f_{-n}(\alpha_0^n)\\
            &=\sum_{\lambda\in k_F}w_0^{-1}\begin{pmatrix}1 & 0\\ -[\lambda]\omega_F^{2n} & 1\end{pmatrix}\cdot f_{-n}(\alpha_0^n)=\sum_{\lambda\in k_F}w_0^{-1}\cdot v_{\sigma}=0.
        \end{split}
    \end{equation*}
    Hence, for $n\geq 1$, the function $f_{-n}\,|\,T_{w_0}$ is supported on $K_0\alpha_0^nI_S(1)$, so we have $$f_{-n}\,|\,T_{w_0}=a_{-n}f_{-n}.$$Finally, for $n=0$ it is clear that $f_0\,|\,T_{w_0}$ is supported on $K_0$, and so $$f_0\,|\,T_{w_0}=a_0f_0.$$

    Next, we consider the functions $f_{-n}\,|\, T_{w_0^{-1}\alpha_0^{-1}}$ for $n\geq 0$. Here also, $x\in \mathrm{Supp}(f_{-n}\,|\, T_{w_0^{-1}\alpha_0^{-1}})\implies x\in K_0\alpha_0^nI_S(1)w_0^{-1}\alpha_0^{-1}\bar{U}_S(\mathfrak{p}_F)=K_0\alpha_0^{-(n+1)}I_S(1)$. Therefore, we compute $$(f_{-n}\,|\,T_{w_0^{-1}\alpha_0^{-1}})(\alpha_0^{-(n+1)})=\sum_{\mu\in k_F}f_{-n}(\alpha_0^{-(n+1)}\begin{pmatrix}
        1 & 0\\ [\mu]\omega_F & 1
    \end{pmatrix}\alpha_0w_0)=f_{-n}(\alpha_0^{-n}w_0)=w_0\cdot v_{\sigma},$$where the second equality follows from : $\alpha_0^{-(n+1)}\begin{pmatrix}
        1 & 0\\ [\mu]\omega_F & 1
    \end{pmatrix}\alpha_0w_0\in K_0\alpha_0^{-(n+1)}I_S(1)$ for $\mu\neq 0$; this again is easy to prove using elementary row and column eliminations to reduce the matrix to its normal form. Hence, for $n\geq 0$, we have $$f_{-n}\,|\,T_{w_0^{-1}\alpha_0^{-1}}=f_{n+1}.$$
    Finally, we consider the function $f_n\,|\,T_{w_0^{-1}\alpha_0^{-1}}$ for $n\geq 1$. Note that its support is contained in $$K_0\alpha_0^{-n}I_S(1)w_0^{-1}\alpha_0^{-1}\bar{U}_S(\mathfrak{p}_F)\subset K_0\alpha_0^{n-1}I_S(1)\cup K_0\alpha_0^{-n}I_S(1).$$The above containment can be proved by noting that $K_0\alpha_0^{-n}I_S(1)w_0^{-1}\alpha_0^{-1}\bar{U}_S(\mathfrak{p}_F)=K_0\alpha_0^nU_S(\mathfrak{p}_F)\alpha_0^{-1}\bar{U}_S(\mathfrak{p}_F)$, and computing the normal form of a matrix of the form $$\begin{pmatrix}\omega_F^{-n} & 0\\ 0 & \omega_F^n\end{pmatrix}\begin{pmatrix}1 & x\\ 0 & 1\end{pmatrix}\begin{pmatrix}\omega_F & 0\\ 0 & \omega_F^{-1}\end{pmatrix}$$for $x\in \mathfrak{p}_F$ depending on the valuation of $x$ being $1$ or $\geq 1$. So, we compute 
    \begin{equation*}
        \begin{split}
            (f_n\,|\,T_{w_0^{-1}\alpha_0^{-1}})(\alpha_0^{n-1})&=\sum_{\mu\in k_F}f_n(\alpha_0^{n-1}\begin{pmatrix}1 & 0\\ [\mu]\omega_F & 1\end{pmatrix}\alpha_0w_0)=\sum_{\mu\in k_F}\begin{pmatrix}1 & 0\\ [\mu]\omega_F^{2n-1} & 1\end{pmatrix}\cdot f_n(\alpha_0^nw_0)\\
            &=\sum_{\mu\in k_F}\begin{pmatrix}1 & 0\\ [\mu]\omega_F^{2n-1} & 1\end{pmatrix}w_0\cdot f_n(\alpha_0^{-n})=-\sum_{\mu\in k_F}\begin{pmatrix}1 & 0\\ [\mu]\omega_F^{2n-1} & 1\end{pmatrix}\cdot v_{\sigma}=-\sum_{\mu\in k_F}v_{\sigma}=0.
        \end{split}
    \end{equation*}
    Hence, we conclude that $(f_n\,|\,T_{w_0^{-1}\alpha_0^{-1}})(\alpha_0^{n-1})$ is supported on $K_0\alpha_0^{-n}I_S(1)$, and therefore, for $n\geq 1$, we have $$f_n\,|\,T_{w_0^{-1}\alpha_0^{-1}}=b_nf_n$$where $b_n$'s are scalars. This completes the proof of the Proposition.
\end{proof}

Next, we consider the situation where $\chi_{\sigma}\neq \chi_{\sigma}^{w_0}$. We consider the operators $T_{\alpha_0,\chi_{\sigma}},\,T_{\alpha_0^{-1},\chi_{\sigma}}$ of $\mathcal{H}(G_S,I_S,\chi_{\sigma})$, and the operators $T_{\alpha_0,\chi_{\sigma}^{w_0}},\,T_{\alpha_0^{-1},\chi_{\sigma}^{w_0}}$ of $\mathcal{H}(G_S,I_S,\chi_{\sigma}^{w_0})$.

\begin{prop}\label{action of non-degenerate IHA on isoty-comp of cpt ind}
    Let $\chi_{\sigma}\neq \chi_{\sigma}^{w_0}$. Then : 
    \begin{enumerate}
        \item For $n\geq 0$, we have $f_{-n}\,|\,T_{\alpha_0^{-1},\chi_{\sigma}}=0$ and $f_{-n}\,|\,T_{\alpha_0,\chi_{\sigma}}=f_{-(n+1)}$.

        \item For $n\geq 1$, we have $f_n\,|\,T_{\alpha_0,\chi_{\sigma}^{w_0}}=0$ and $f_n\,|\,T_{\alpha_0^{-1},\chi_{\sigma}^{w_0}}=f_{n+1}$.
    \end{enumerate}
\end{prop}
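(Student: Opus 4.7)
The proof will follow the template of Proposition \ref{action of degenerate IHA on isoty-comp of cpt ind}. Since $\alpha_0 \in T_S$, conjugation by $\alpha_0$ preserves every character of $I_S$, so the four Hecke operators $T_{\alpha_0^{\pm 1}, \chi_\sigma}$ and $T_{\alpha_0^{\pm 1}, \chi_\sigma^{w_0}}$ are well-defined, and the action formula derived in the proof of Proposition \ref{right action of IHA} applies verbatim. The Iwahori decomposition of $I_S(1)$ yields explicit coset representatives:
\begin{align*}
I_S(1)/(I_S(1)\cap\alpha_0^{-1}I_S(1)\alpha_0) &\simeq U_S(\mathcal{O}_F)/U_S(\mathfrak{p}_F^2), \\
I_S(1)/(I_S(1)\cap\alpha_0 I_S(1)\alpha_0^{-1}) &\simeq \bar{U}_S(\mathfrak{p}_F)/\bar{U}_S(\mathfrak{p}_F^3),
\end{align*}
both of size $q^2$ and parametrised by $k_F^2$ via the multiplicative lift $A(-)$.

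For the non-vanishing claims $f_{-n}|T_{\alpha_0,\chi_\sigma} = f_{-(n+1)}$ and $f_n|T_{\alpha_0^{-1},\chi_\sigma^{w_0}} = f_{n+1}$, I would evaluate both sides at $\alpha_0^{\pm(n+1)}$. The identity $\alpha_0^{n+1}\begin{pmatrix}1 & A(\lambda)\\ 0 & 1\end{pmatrix}\alpha_0^{-1} = \alpha_0^n\begin{pmatrix}1 & \omega_F^{-2}A(\lambda)\\ 0 & 1\end{pmatrix}$ combined with a Bruhat normal-form computation shows that the upper-right entry $\omega_F^{-2}A(\lambda)$ has strictly negative valuation for $\lambda\neq 0$, placing the argument outside $\mathrm{Supp}(f_{-n}) = K_0\alpha_0^n I_S(1)$; only the $\lambda = 0$ summand contributes, yielding the value $v_\sigma$. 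By the one-dimensionality of the $(I_S,\chi_\sigma)$-isotypic subspace of functions supported on $K_0\alpha_0^{n+1}I_S(1)$ (Proposition \ref{basis of I_S,chi invariants}), the equality follows.

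For the vanishing claims $f_{-n}|T_{\alpha_0^{-1},\chi_\sigma} = 0$ and $f_n|T_{\alpha_0,\chi_\sigma^{w_0}} = 0$, the base case $n = 0$ (respectively $n = 1$) is handled by a pure structural argument: the support analysis yields $\mathrm{Supp}(f_0|T_{\alpha_0^{-1}}) \subset K_0\alpha_0^{-1}I_S(1)$, on which $I_S$ acts by $\chi_\sigma^{w_0}$, and this is disjoint from the $\chi_\sigma$-isotypic locus, so the output vanishes. For larger $n$, the support analysis distributes the image over several cosets $K_0\alpha_0^j I_S(1)$; after imposing the $\chi_\sigma$-isotypic constraint only $j \in \{n-1, n, n+1\}$ remain, and evaluation at the corresponding representatives yields either summands whose arguments land in disjoint Iwahori cosets (contributing $0$) or sums of $q^2$ identical copies of $v_\sigma$ via the identity $\alpha_0^n\begin{pmatrix}1 & 0\\ Y & 1\end{pmatrix} = \begin{pmatrix}1 & 0\\ Y\omega_F^{2n} & 1\end{pmatrix}\alpha_0^n$ (valid whenever the resulting lower-left entry lies in $\mathfrak{p}_F$), which vanish in characteristic $p$.

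The main obstacle is the Bruhat normal-form bookkeeping in the support analysis, specifically tracking the Iwahori double cosets containing $\alpha_0^{n-1}\begin{pmatrix}1 & u\\ 0 & 1\end{pmatrix}$ and the symmetric lower-unipotent expressions as the valuations of the parameters vary. These calculations exactly parallel those in the proof of Proposition \ref{action of degenerate IHA on isoty-comp of cpt ind} and transfer with only routine modifications.
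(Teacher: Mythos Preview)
Your proposal is correct and follows essentially the same approach as the paper: the same coset decompositions via the Iwahori factorisation, the same support analyses through Bruhat normal forms, the same use of the isotypic constraint (Proposition \ref{basis of I_S,chi invariants}) to discard cosets, and the same characteristic-$p$ vanishing of the $q^2$-fold sum $\sum_{\mu\in k_F^2} v_\sigma$. One small correction in your bookkeeping: for $f_{-n}\,|\,T_{\alpha_0^{-1},\chi_\sigma}$ with $n\geq 1$, the support analysis yields exponents $\{n-1,\,-n,\,-(n+1)\}$ (not $\{n-1,n,n+1\}$), and the $\chi_\sigma$-isotypic constraint then leaves \emph{only} the single coset $K_0\alpha_0^{n-1}I_S(1)$, on which the evaluation produces the vanishing $q^2$-fold sum.
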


\begin{proof}
    Recall that by Proposition \ref{right action of IHA}, for $f\in (\mathrm{ind}_{K_0}^{G_S}(\sigma))^{(I_S,\chi)}$ and $T_g\in \mathcal{H}(G_S,I_S,\chi)$, we have $$f\,|\,T_g=\sum_{i\in I_S(1)/(I_S(1)\cap (g^{-1}I_S(1)g))}ig^{-1}\cdot f.$$Now, taking $g\in \{\alpha_0,\alpha_0^{-1}\}$ and observing that the Iwahori decomposition gives $$I_S(1)/(I_S(1)\cap (\alpha_0^{-1}I_S(1)\alpha_0))\simeq U_S(\mathcal{O}_F)/U_S(\mathfrak{p}_F^2)\quad \text{and}\quad I_S(1)/(I_S(1)\cap (\alpha_0I_S(1)\alpha_0^{-1}))\simeq \bar{U}_S(\mathfrak{p}_F)/\bar{U}_S(\mathfrak{p}_F^3),$$we obtain $$f\,|\,T_{\alpha_0}=\sum_{\lambda\in k_F^2}\begin{pmatrix}1 & A(\lambda)\\ 0 & 1\end{pmatrix}\alpha_0^{-1}\cdot f\quad \text{and}\quad f\,|\,T_{\alpha_0^{-1}}=\sum_{\mu\in k_F^2}\begin{pmatrix}1 & 0\\ \omega_FA(\mu) & 1\end{pmatrix}\alpha_0\cdot f.$$
    At first, we consider the first statement. We compute the support of $f_{-n}\,|\,T_{\alpha_0^{-1},\chi_{\sigma}}$; note that the support is contained in $K_0\alpha_0^nI_S(1)\alpha_0^{-1}\bar{U}_S(\mathfrak{p}_F)$. 
    
    When $n=0$, this is contained in the double coset $K_0\alpha_0^{-1}I_S(1)$, hence $$f_0\,|\,T_{\alpha_0^{-1},\chi_{\sigma}}=0,$$ since by Proposition \ref{basis of I_S,chi invariants}, the function $f_0\,|\,T_{\alpha_0^{-1},\chi_{\sigma}}$ is contained in $\langle f_n\,|\,n\leq 0\rangle$ which is supported on $\cup_{k\geq 0}K_0\alpha_0^kI_S(1)$. 

    When $n\geq 1$, we have $$K_0\alpha_0^nI_S(1)\alpha_0^{-1}\bar{U}_S(\mathfrak{p}_F)\subset K_0\alpha_0^{n-1}I_S(1)\cup K_0\alpha_0^{-n}I_S(1)\cup K_0\alpha_0^{-(n+1)}I_S(1).$$This containment follows by observing that for $a\in \mathcal{O}_F$ the matrix $$\begin{pmatrix}\omega_F^{-n} & 0\\ 0 & \omega_F^n\end{pmatrix}\begin{pmatrix}1 & a\\ 0 & 1\end{pmatrix}\begin{pmatrix}\omega_F & 0\\ 0 & \omega_F^{-1}\end{pmatrix}=\begin{pmatrix}\omega_F^{-n+1} & a\omega_F^{-n-1}\\ 0 & \omega_F^{n-1}\end{pmatrix}$$has $\alpha_0^{-(n+1)}$ as its normal form when $a\in \mathcal{O}_F^{\times}$, and $\alpha_0^{-n}$ as its normal form when $a\in \mathfrak{p}_F\setminus \mathfrak{p}_F^2$. If $a\in \mathfrak{p}_F^2$, then the normal form is $\alpha_0^{n-1}$. So, we only need to compute $$(f_{-n}\,|\,T_{\alpha_0^{-1},\chi_{\sigma}})(\alpha_0^{n-1})=\sum_{\mu\in k_F^2}f_{-n}(\alpha_0^{n-1}\begin{pmatrix}1 & 0\\ \omega_FA(\mu) & 1\end{pmatrix}\alpha_0)=\sum_{\mu\in k_F^2}\begin{pmatrix}1 & 0\\ \omega_F^{2n-1}A(\mu) & 1\end{pmatrix}\cdot f_{-n}(\alpha_0^n)=\sum_{\mu\in k_F^2}v_{\sigma}=0.$$Now, the support of $f_{-n}\,|\,T_{\alpha_0,\chi_{\sigma}}$ is contained in $K_0\alpha_0^nI_S(1)\alpha_0U_S(\mathcal{O}_F)= K_0\alpha_0^{n+1}I_S(1)$. So, we compute $$(f_{-n}\,|\,T_{\alpha_0,\chi_{\sigma}})(\alpha_0^{n+1})=\sum_{\lambda\in k_F^2}f_{-n}(\alpha_0^{n+1}\begin{pmatrix}1 & A(\lambda
    )\\ 0 & 1\end{pmatrix}\alpha_0^{-1})=f_{-n}(\alpha_0^n)=v_{\sigma},$$since, for $\lambda\neq (0,0)$ we have $$\alpha_0^{n+1}\begin{pmatrix}1 & A(\lambda
    )\\ 0 & 1\end{pmatrix}\alpha_0^{-1}=\begin{pmatrix}\omega_F^{-n} & \omega_F^{-(n+2)}A(\lambda)\\ 0 & \omega_F^n\end{pmatrix},$$whose normal form is $\alpha_0^{-(n+2)}$ when $A(\lambda)\in \mathcal{O}_F^{\times}$, and $\alpha_0^{-(n+1)}$ when $A(\lambda)\in \mathfrak{p}_F\setminus \mathfrak{p}_F^2$, and that in turn implies that for $\lambda\neq (0,0)$ we have $$\alpha_0^{n+1}\begin{pmatrix}1 & A(\lambda
    )\\ 0 & 1\end{pmatrix}\alpha_0^{-1}\in K_0\alpha_0^{-(n+1)}I_S(1)\cup K_0\alpha_0^{-(n+2)}I_S(1).$$This completes the proof of the first part.

    We now turn to the second part. For $n\geq 1$ we first compute the support of $f_n\,|\,T_{\alpha_0,\chi_{\sigma}^{w_0}}$ which is contained in $K_0\alpha_0^{-n}I_S(1)\alpha_0U_S(\mathcal{O}_F)\subset K_0\alpha_0^{-(n-1)}I_S(1)\cup K_0\alpha_0^nI_S(1)\cup K_0\alpha_0^{n-1}I_S(1)$. However, $f_n\,|\,T_{\alpha_0,\chi_{\sigma}^{w_0}}$ is contained in $\langle f_n\,|\,n\geq 1\rangle$ by Proposition \ref{basis of I_S,chi invariants}, and hence supported on $\cup_{k\geq 1} K_0\alpha_0^{-k}I_S(1)$. So, we compute $$(f_n\,|\,T_{\alpha_0,\chi_{\sigma}^{w_0}})(\alpha_0^{-(n-1)})=\sum_{\lambda\in k_F^2}f_n(\alpha_0^{-n+1}\begin{pmatrix}1 & A(\lambda)\\ 0 & 1\end{pmatrix}\alpha_0^{-1})=\sum_{\lambda\in k_F^2}f_n(\alpha_0^{-n})\sum_{\lambda\in k_F^2}w_0\cdot v_{\sigma}=0.$$Hence, we conclude, for $n\geq 1$, we have $$f_n\,|\,T_{\alpha_0,\chi_{\sigma}^{w_0}}=0.$$Finally, we compute $f_n\,|\,T_{\alpha_0^{-1},\chi_{\sigma}^{w_0}}$. At first, note that its support is contained in $K_0\alpha_0^{-n}I_S(1)\alpha_0^{-1}\bar{U}_S(\mathfrak{p}_F)=K_0\alpha_0^{-(n+1)}I_S(1)$, hence we compute $$(f_n\,|\,T_{\alpha_0^{-1},\chi_{\sigma}^{w_0}})(\alpha_0^{-(n+1)})=\sum_{\mu\in k_F^2}f_n(\alpha_0^{-(n+1)}\begin{pmatrix}1 & 0\\ \omega_FA(\mu) & 1\end{pmatrix}\alpha_0)=f_n(\alpha_0^{-n})=1;$$the second equality follows from the fact that for $\mu\neq (0,0)$ we have $$\alpha_0^{-(n+1)}\begin{pmatrix}1 & 0\\ \omega_FA(\mu) & 1\end{pmatrix}\alpha_0\in K_0\alpha_0^{n}I_S(1)\cup K_0\alpha_0^{n+1}I_S(1),$$which in turn follows by computing the normal form of the above matrix when valuation of $A(\mu)$ is $0$ or $1$. Therefore, we have $$f_n\,|\,T_{\alpha_0^{-1},\chi_{\sigma}^{w_0}}=f_{n+1}$$ for $n\geq 1$. This completes the proof of the second part, and the Proposition.  
\end{proof}

\subsection{Finiteness results} We now prove two results which are analogous to Proposition 18 and 32 of \cite{Barthel}.  The proof of the following codimensionality result is almost similar to that of Corollary 3.3 in \cite{XuHeckeEigenvalues}.

\begin{prop}\label{finite codimension}
    Let $\chi\in \{\chi_{\sigma},\chi_{\sigma}^{w_0}\}$. Then, any non-zero $\mathcal{H}(G_S,I_S,\chi)$-submodule of $(\mathrm{ind}_{K_0}^{G_S}(\sigma))^{(I_S,\chi)}$ has finite codimension.
\end{prop}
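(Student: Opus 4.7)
The plan is to split into cases according to whether $\chi_\sigma=\chi_\sigma^{w_0}$, and in each case reduce the statement to the elementary fact that every non-zero ideal of $\bar{\mathbb{F}}_p[T]$ has finite codimension.

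If $\chi_\sigma\neq\chi_\sigma^{w_0}$, then by Proposition~\ref{action of non-degenerate IHA on isoty-comp of cpt ind} one of $T_{\alpha_0,\chi}$ and $T_{\alpha_0^{-1},\chi}$ acts as a pure shift on the basis of $(\mathrm{ind}_{K_0}^{G_S}(\sigma))^{(I_S,\chi)}$ while the other annihilates it. Calling this shift operator $T$ and the cyclic generator $f_*$ (namely $f_0$ or $f_1$), the isotypic component becomes a free $\bar{\mathbb{F}}_p[T]$-module of rank one via $f_*\leftrightarrow 1$. Any non-zero $\mathcal{H}(G_S,I_S,\chi)$-submodule is then a fortiori a non-zero $\bar{\mathbb{F}}_p[T]$-submodule, i.e.\ a non-zero principal ideal, and hence has finite codimension.

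If $\chi_\sigma=\chi_\sigma^{w_0}$, the isotypic has basis $(f_n)_{n\in\mathbb{Z}}$. Introduce the composite operators $R:=T_{w_0}T_{w_0^{-1}\alpha_0^{-1}}$ and $L:=T_{w_0^{-1}\alpha_0^{-1}}T_{w_0}$ in $\mathcal{H}(G_S,I_S,\chi_\sigma)$; by Proposition~\ref{action of degenerate IHA on isoty-comp of cpt ind} they satisfy $f_n\,|\,R=f_{n+1}$ for $n\geq 1$, $f_{-n}\,|\,R=a_{-n}f_{n+1}$ for $n\geq 0$, and the mirror formulas for $L$ involving the scalars $b_n$ in the negative direction. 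Let $v=\sum_n\lambda_n f_n$ be a non-zero element of the submodule $M$. I first check that $v\,|\,R$ and $v\,|\,L$ cannot both vanish: simultaneous vanishing yields $\lambda_j=-\lambda_{-j}a_{-j}$ for $j\geq 1$, $\lambda_0 a_0=0$, and $\lambda_{-k}=-\lambda_{k+1}b_{k+1}$ for $k\geq 0$, whence $\lambda_{-k}=\lambda_{-(k+1)}a_{-(k+1)}b_{k+1}$; choosing $N$ with $\lambda_n=0$ for $|n|>N$ one has $\lambda_{-(N+1)}=0$, and downward induction kills all $\lambda_{-k}$ and then all $\lambda_j$, forcing $v=0$.

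Assume without loss of generality $v\,|\,R\neq 0$. A direct induction gives $v\,|\,R^k=\sum_{j\geq 0}\mu_j f_{j+k}$ for $k\geq 1$, with $k$-independent scalars $\mu_0=\lambda_0 a_0$ and $\mu_j=\lambda_j+\lambda_{-j}a_{-j}$ ($j\geq 1$), not all zero. Identifying $\langle f_n:n\geq 1\rangle$ with $\bar{\mathbb{F}}_p[T]$ via $f_n\leftrightarrow T^{n-1}$ turns $v\,|\,R^k$ into $T^{k-1}p(T)$, where $p(T)=\sum_j\mu_j T^j$ is a non-zero polynomial; so the linear span of $\{v\,|\,R^k:k\geq 1\}$ corresponds to the ideal $p(T)\bar{\mathbb{F}}_p[T]$, which has codimension $\deg p$ in $\langle f_n:n\geq 1\rangle$. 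Applying $T_{w_0}$, which bijects $\langle f_n:n\geq 1\rangle$ onto $\langle f_{-n}:n\geq 1\rangle$, produces a codimension-$\deg p$ subspace of $\langle f_{-n}:n\geq 1\rangle$ inside $M$. Together with the single remaining basis vector $f_0$, this shows $M$ has codimension at most $2\deg p+1$ in $(\mathrm{ind}_{K_0}^{G_S}(\sigma))^{(I_S,\chi_\sigma)}$. The main obstacle is the cascade argument showing $v\,|\,R=v\,|\,L=0\Rightarrow v=0$: the scalars $a_{-n},b_n$ are not explicitly known and may vanish, but the interlocking shift/scalar recursions together with the finite support of $v$ still cause the downward induction to terminate.
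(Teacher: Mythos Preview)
Your proof is correct. In the non-degenerate case $\chi_\sigma\neq\chi_\sigma^{w_0}$ your argument is the same as the paper's, merely rephrased via the identification with $\bar{\mathbb{F}}_p[T]$ instead of explicitly adjoining finitely many basis vectors to $M$.

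In the degenerate case $\chi_\sigma=\chi_\sigma^{w_0}$ the two proofs differ in organisation. The paper takes a non-zero $f\in M$ and case-splits on the support of $f$ (all indices positive, all non-positive, or mixed); in the mixed case it shows directly that $f\,|\,T_{w_0}$ and $f\,|\,T_{w_0^{-1}\alpha_0^{-1}}$ cannot both vanish, and observes that either of these has support in a single half-line, reducing to the pure cases where one then alternates $T_{w_0}$ and $T_{w_0^{-1}\alpha_0^{-1}}$. You instead work from the start with the composites $R=T_{w_0}T_{w_0^{-1}\alpha_0^{-1}}$ and $L=T_{w_0^{-1}\alpha_0^{-1}}T_{w_0}$ and prove a single non-vanishing lemma ($v\,|\,R=v\,|\,L=0\Rightarrow v=0$) via the interlocking recursion $\lambda_{-k}=\lambda_{-(k+1)}a_{-(k+1)}b_{k+1}$; this eliminates the support trichotomy and yields an explicit codimension bound $2\deg p+1$ through the polynomial-ring identification. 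Both arguments rest on exactly the same input, namely Proposition~\ref{action of degenerate IHA on isoty-comp of cpt ind}, but your packaging is a bit cleaner: the recursion argument is insensitive to whether the scalars $a_{-n},b_n$ vanish and does not require tracking where the extremal coefficient of $f$ lands after a single Hecke operator.
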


\begin{proof}
    Let $M$ be a non-zero $\mathcal{H}(G_S,I_S,\chi)$-submodule of $(\mathrm{ind}_{K_0}^{G_S}(\sigma))^{(I_S,\chi)}$, and take a non-zero $f\in M$.

    At first, we consider the situation where $\chi_{\sigma}\neq \chi_{\sigma}^{w_0}$. Let $\chi=\chi_{\sigma}$. We write $$f=\sum_{0\leq k\leq m}c_kf_{-k}$$for some $0\leq m$, and some $c_k\in \bar{\mathbb{F}}_p$ with $0\leq k\leq m$. We further assume $c_m\neq 0$. We show that the subspace $M^{\prime}$ spanned by $M$ and the set of functions $\{f_0,f_{-1},\dots,f_{-m}\}$ is the whole space $(\mathrm{ind}_{K_0}^{G_S}(\sigma))^{(I_S,\chi)}$. This clearly means $M$ is of finite codimension. Therefore, it suffices to show that $f_{-k}\in M^{\prime}$ for every $k\geq m+1$. Now, applying the operator $T_{\alpha_0,\chi_{\sigma}}$ to $f$ we have, by Proposition \ref{action of non-degenerate IHA on isoty-comp of cpt ind}, that $$f^{\prime}:=f\,|\,T_{\alpha_0,\chi_{\sigma}}=\sum_{0\leq k\leq m}c_kf_{-(k+1)}\in M,$$and hence $f_{-(m+1)}\in M^{\prime}$. Applying $T_{\alpha_0,\chi_{\sigma}}$ repeatedly we obtain $f_{-k}\in M^{\prime}$ for all $k\geq m+1$ as required.

    Next, we let $\chi=\chi_{\sigma}^{w_0}.$ We write $$f=\sum_{1\leq k\leq m}c_kf_k$$as before with some $m\geq 1$ and some $c_k\in \bar{\mathbb{F}}_p$ for $1\leq k\leq m$. Assume $c_m\neq 0$. Here, by applying the operator $T_{\alpha_0^{-1},\chi_{\sigma}^{w_0}}$, we can show similarly that the subspace $M^{\prime}$ spanned by $M$ and $\{f_1,\dots,f_m\}$ is $(\mathrm{ind}_{K_0}^{G_S}(\sigma))^{(I_S,\chi)}$.

    We now turn to the situation where $\chi=\chi_{\sigma}=\chi_{\sigma}^{w_0}$. We write $$f=\sum_{n\leq k\leq m}c_kf_k$$ for some $n,m\in \mathbb{Z}$ and $c_k\in \bar{\mathbb{F}}_p$ for $n\leq k\leq m$. We assume $c_n c_m\neq 0$, and that $n,m>0$. We show that the subspace $M^{\prime}$ spanned by $M$ and $\{f_{-m+1},\dots,f_m\}$ is $(\mathrm{ind}_{K_0}^{G_S}(\sigma))^{(I_S,\chi)}=\mathrm{ind}_{K_0}^{G_S}(\sigma)^{I_S(1)}$. Therefore, we need to show that $f_k\in M^{\prime}$ for $k\geq m+1$ and $k\leq -m$. We consider $f^{\prime}:=f\,|\,T_{w_0}\in M$. By Proposition \ref{action of degenerate IHA on isoty-comp of cpt ind}, we have $$f^{\prime}=f\,|\,T_{w_0}=\sum_{n\leq k\leq m}c_kf_{-k}.$$Since $c_m\neq 0$, we have $f_{-m}\in M^{\prime}$. Again, consider $f^{\prime\prime}:=f^{\prime}\,|\,T_{w_0^{-1}\alpha_0^{-1}}\in M$. By Proposition \ref{action of non-degenerate IHA on isoty-comp of cpt ind}, we have $$f^{\prime\prime}=f^{\prime}\,|\,T_{w_0^{-1}\alpha_0^{-1}}=\sum_{n\leq k\leq m}c_kf_{k+1}.$$ So, we have $f_{m+1}\in M^{\prime}$. We again apply $T_{w_0}$ and then $T_{w_0^{-1}\alpha_0^{-1}}$ repeatedly to obtain $f_k\in M^{\prime}$ for $k\geq m+1$ and $k\leq -m$. On the other hand, if both $n,m\leq 0$, then we apply $T_{w_0^{-1}\alpha_0^{-1}}$ to $f$ to obtain $$f\,|\,T_{w_0^{-1}\alpha_0^{-1}}=\sum_{n\leq k\leq m}c_kf_{-k+1}\in M$$and we are in the previous case.

    Finally, we consider the case when $n\leq 0$ and $m\geq 1$. Here also, we assume $c_nc_m\neq 0$. At first, we show that $f\,|\,T_{w_0}$ and $f\,|\,T_{w_0^{-1}\alpha_0^{-1}}$ cannot both be zero. So, let $f\,|\,T_{w_0^{-1}\alpha_0^{-1}}=0$. This implies, by Proposition \ref{action of degenerate IHA on isoty-comp of cpt ind}, the following : $$\sum_{n\leq k\leq 0}c_kf_{-k+1}+\sum_{1\leq k\leq m}c_kb_kf_k=0.$$Now, since $c_n\neq 0$, this implies $-n+1\leq m$ i.e. $n>-m$. But then applying Proposition \ref{action of degenerate IHA on isoty-comp of cpt ind} again, we get $$f\,|\,T_{w_0}=\sum_{1\leq k\leq m}c_kf_{-k}+\sum_{n\leq k\leq 0}c_ka_kf_{k},$$which is non-zero as $c_m\neq 0$. Therefore, by considering any one of the non-zero functions $f\,|\,T_{w_0}$ or $f\,|\,T_{w_0^{-1}\alpha_0^{-1}}$ we reduce to the case considered before where the basis functions appearing in the presentation of $f$ are all indexed by positive integers or all indexed by non-positive integers. This completes the proof of the Proposition.
\end{proof}

\begin{rmk}
	We mention that in Subsection 3.7.3 of \cite{Abdellatif}, Abdellatif constructs a ''counterexample'' to the above proposition. However, it was mentioned in Remark 3.2 of Kozioł's recent paper\cite{Koziol} that this "counterexample" is incorrect. The author would like to thank Peng Xu for pointing out this remark in Kozioł's paper.
\end{rmk}

\begin{rmk}
	The above codimentionality result is not true in general. For example, it fails for higher rank groups like $\mathrm{GL}_3(F)$; see Theorem 5.1 in \cite{XuHeckeEigenvalues}. In fact, the Proposition 18 of \cite{Barthel} is sensitive to the fact that the irreducible representation of $\mathrm{GL}_2(F)$ under consideration admits central characters. The proof fails to work for irreducibles of $\mathrm{GL}_2(F)$ without central characters; the existence of these has been proved recently by Daniel Le in \cite{le2024irreduciblesmoothrepresentationsdefining}.
\end{rmk}

We now prove the main Theorem of this section, which is the $\mathrm{SL}(2)$-analogue of Proposition 32 of \cite{Barthel}. The proof is almost entirely similar to the one in  \cite{Barthel}, so we just add a few extra lines of details.

\begin{prop}\label{finite dimensionality result}
    Let $\sigma$ be a weight of $K_0$, and let $\pi$ be a smooth irreducible representation of $G_S$. If $\varphi\in \mathrm{Hom}_{G_S}(\mathrm{ind}_{K_0}^{G_S}(\sigma),\pi)$ is a non-zero intertwiner, then the $\mathcal{H}(G_S,K_0,\sigma)$-submodule generated by $\varphi$ is of finite dimension.
\end{prop}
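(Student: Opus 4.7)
The plan is to emulate the strategy of Barthel--Livne's Proposition 32, reducing via Frobenius reciprocity to a statement about a cyclic Iwahori--Hecke module in $\pi^{(I_S,\chi_\sigma)}$, and then invoking Proposition \ref{finite codimension}.

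First, I would use Frobenius reciprocity to embed $\mathrm{Hom}_{G_S}(\mathrm{ind}_{K_0}^{G_S}(\sigma),\pi)$ into $\pi^{(I_S,\chi_\sigma)}$ via $\psi\mapsto \psi([1,v_\sigma])$. Letting $w:=\varphi([1,v_\sigma])\neq 0$, precomposition by the generator $\tau$ corresponds under this embedding---via the explicit formula (\ref{action of tau for sigma non-trivial})---to applying the operator $\mathcal{S}$ (or $\mathcal{S}+\mathcal{S}'$ when $\sigma=1$, where $\mathcal{S}':=\bigl[v\mapsto \sum_\mu \begin{pmatrix}1 & 0\\ \omega_F A(\mu) & 1\end{pmatrix}\alpha_0\cdot v\bigr]$) to $w$. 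It therefore suffices to prove that $\bar{\mathbb{F}}_p[\mathcal{T}]\cdot w$ is finite-dimensional, where $\mathcal{T}$ denotes the relevant operator.

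Next, by Propositions \ref{action of degenerate IHA on isoty-comp of cpt ind} and \ref{action of non-degenerate IHA on isoty-comp of cpt ind}, the subspace $M_\sigma:=(\mathrm{ind}_{K_0}^{G_S}(\sigma))^{(I_S,\chi_\sigma)}$ is cyclic as an $\mathcal{H}(G_S,I_S,\chi_\sigma)$-module, generated by $f_0=[1,v_\sigma]$, since iterated application of the appropriate generators produces every basis function $f_n$ in this isotypic component. Because the restriction of $\varphi$ to $M_\sigma$ is $\mathcal{H}(G_S,I_S,\chi_\sigma)$-equivariant and $\mathcal{T}$ arises from a Hecke operator in $\mathcal{H}(G_S,I_S,\chi_\sigma)$, we obtain the inclusion $\bar{\mathbb{F}}_p[\mathcal{T}]\cdot w\subseteq \mathcal{H}(G_S,I_S,\chi_\sigma)\cdot w=\varphi(M_\sigma)$. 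Setting $K_\sigma:=\ker(\varphi)\cap M_\sigma$, which is an $\mathcal{H}(G_S,I_S,\chi_\sigma)$-submodule of $M_\sigma$, Proposition \ref{finite codimension} guarantees that if $K_\sigma\neq 0$, then $M_\sigma/K_\sigma\cong \varphi(M_\sigma)$ is finite-dimensional, and we are done.

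The heart of the argument---and the main obstacle---is showing $K_\sigma\neq 0$. Since $\mathrm{ind}_{K_0}^{G_S}(\sigma)$ is reducible (the operator $\tau$ cuts out a non-trivial proper $G_S$-subrepresentation), $\ker(\varphi)$ is a non-zero $G_S$-subrepresentation of $\mathrm{ind}_{K_0}^{G_S}(\sigma)$. Since $I_S(1)$ is pro-$p$ and we work in characteristic $p$, every non-zero smooth $G_S$-subrepresentation contains a non-zero $I_S(1)$-fixed vector, so $\ker(\varphi)\cap (\mathrm{ind}_{K_0}^{G_S}(\sigma))^{I_S(1)}\neq 0$. In the degenerate case ($\chi_\sigma=\chi_\sigma^{w_0}$), this intersection already coincides with $K_\sigma$ and we are done. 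In the non-degenerate case, $(\mathrm{ind}_{K_0}^{G_S}(\sigma))^{I_S(1)}$ decomposes as the direct sum of its $(I_S,\chi_\sigma)$- and $(I_S,\chi_\sigma^{w_0})$-isotypic components via idempotents in $\bar{\mathbb{F}}_p[I_S/I_S(1)]$ (well-defined since $|I_S/I_S(1)|=q-1$ is coprime to $p$), yielding a non-zero kernel vector in at least one isotypic component. Promoting this---when the vector lands only in the $\chi_\sigma^{w_0}$-component---to a non-zero element of $K_\sigma$ itself is the delicate point; the author indicates it is handled by a short additional argument modeled on Barthel--Livne, presumably using the irreducibility of $\pi$ together with the operators $T_{w_0}$ and $T_{w_0^{-1}\alpha_0^{-1}}$ that transport vectors between the two isotypic components.
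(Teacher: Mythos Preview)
Your proposal is correct and follows essentially the same strategy as the paper: both reduce, via Proposition \ref{finite codimension}, to showing $\ker(\varphi)^{(I_S,\chi_\sigma)}\neq 0$, and both handle the transport from the $\chi_\sigma^{w_0}$-component by applying $T_{w_0}$---the paper makes this explicit via the identity $f_n\,|\,T_{w_0}=f_{-n}$ for $n\geq 1$ (so irreducibility of $\pi$ is not needed at that step, contrary to your guess). The only cosmetic difference is packaging: the paper encodes your embedding $\psi\mapsto\psi([1,v_\sigma])$ and your cyclicity of $M_\sigma$ as a commutative square of Hom-spaces induced by the surjection $\delta:\mathrm{ind}_{I_S}^{G_S}(\chi_\sigma)\twoheadrightarrow\mathrm{ind}_{K_0}^{G_S}(\sigma)$, which after unwinding Frobenius reciprocity is exactly your argument.
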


\begin{proof}
    As $\varphi$ is non-zero and $\pi$ is irreducible, $\varphi$ is surjective. But, it cannot be injective as $\mathrm{ind}_{K_0}^{G_S}(\sigma)$ is not irreducible (see Théorème 3.18 in \cite{Abdellatif}). Therefore, since $I_S(1)$ is pro-p, we have $\mathrm{ker}(\varphi)^{I_S(1)}\neq 0$. Now, given any non-zero $v\in \mathrm{ker}(\varphi)^{I_S(1)}$, the $I_S$-subrepresentation $\bar{\mathbb{F}}_p[I_S]\cdot v\subset \mathrm{ker}(\varphi)^{I_S(1)}$ (note that $I_S(1)$ is normal in $I_S$) is finite dimensional, as it can be considered as a representation of $I_S/I_S(1)$. But $I_S/I_S(1)$ is a finite abelian group of order coprime to $p$. So, we can write $\bar{\mathbb{F}}_p[I_S]\cdot v$ as a finite sum of characters. We may replace $v$ by one such non-zero vectors from one of these characters. Hence, we may assume $\mathrm{ker}(\varphi)^{(I_S,\chi)}\neq 0$. By Proposition \ref{ness and suff condition for isotypic comp to be non-zero}, it is forced that such a $\chi\in \{\chi_{\sigma},\chi_{\sigma}^{w_0}\}$. By Frobenius reciprocity, this means $\mathrm{Hom}_{G_S}(\mathrm{ind}_{I_S}^{G_S}(\chi),\mathrm{ker}(\varphi))\neq 0$. We may also assume that $\chi=\chi_{\sigma}$. This is because if $0\neq f\in \mathrm{ker}(\varphi)^{(I_S,\chi_{\sigma}^{w_0})}$, then we may write $f$ in the form $\sum_{1\leq k\leq n}c_kf_k$, where $c_k$ is not zero for some $k$. Now, using Proposition \ref{right action of IHA}, and Proposition \ref{action of degenerate IHA on isoty-comp of cpt ind}, we have $$f^{\prime}:=f\,|\,T_{w_0}=\sum_{\lambda\in k_F}\begin{pmatrix}1 & [\lambda]\\ 0 & 1\end{pmatrix}w_0^{-1}\cdot f=\sum_{1\leq k\leq n}c_kf_{-k}$$ is a non-zero function in $\mathrm{ker}(\varphi)^{(I_S,\chi_{\sigma})}.$

    Now, consider the map $$\varphi^{\ast}:=[T\mapsto \varphi\circ T]:\mathrm{Hom}_{G_S}(\mathrm{ind}_{I_S}^{G_S}(\chi_{\sigma}),\mathrm{ind}_{K_0}^{G_S}(\sigma))\to \mathrm{Hom}_{G_S}(\mathrm{ind}_{I_S}^{G_S}(\chi_{\sigma}),\pi).$$Clearly, $\varphi^{\ast}(\mathrm{Hom}_{G_S}(\mathrm{ind}_{I_S}^{G_S}(\chi_{\sigma}),\mathrm{ker}(\varphi)))=0$. Since, by Proposition \ref{finite codimension}, $\mathrm{Hom}_{G_S}(\mathrm{ind}_{I_S}^{G_S}(\chi_{\sigma}),\mathrm{ker}(\varphi))\simeq \mathrm{ker}(\varphi)^{(I_S,\chi_{\sigma})}$ is a non-zero submodule of $\mathrm{Hom}_{G_S}(\mathrm{ind}_{I_S}^{G_S}(\chi_{\sigma}),\mathrm{ind}_{K_0}^{G_S}(\sigma))\simeq (\mathrm{ind}_{K_0}^{G_S}(\sigma))^{(I_S,\chi_{\sigma})}$, it is of finite codimension. Noting that clearly $\varphi^{\ast}$ is also a $\mathcal{H}(G_S,I_S,\chi_{\sigma})$-module homomorphism, we conclude that the image of $\varphi^{\ast}$ is a finite dimensional submodule of $\mathrm{Hom}_{G_S}(\mathrm{ind}_{I_S}^{G_S}(\chi_{\sigma}),\pi)$. Also, we consider the map $$\Phi^{\ast}:=[T\mapsto \varphi\circ T]:\mathrm{Hom}_{G_S}(\mathrm{ind}_{K_0}^{G_S}(\sigma),\mathrm{ind}_{K_0}^{G_S}(\sigma))\to \mathrm{Hom}_{G_S}(\mathrm{ind}_{K_0}^{G_S}(\sigma),\pi).$$

    Let $\delta_0\in \mathrm{Hom}_{K_0}(\mathrm{ind}_{I_S}^{K_0}(\chi_{\sigma}),\sigma)$ be the intertwiner corresponding (by Frobenius reciprocity) to the intertwiner in $\mathrm{Hom}_{I_S}(\chi_{\sigma},\sigma)$ that carries $1\mapsto v_{\sigma}$. Since, $\delta_0$ is non-zero and $\sigma$ is an irreducible $K_0$ representation, we conclude that $\delta_0$ is surjective. Applying the functor $\mathrm{ind}_{K_0}^{G_S}(-)$ to $\delta_0$ we get $\delta\in \mathrm{Hom}_{G_S}(\mathrm{ind}_{I_S}^{G_S}(\chi_{\sigma}),\mathrm{ind}_{K_0}^{G_S}(\sigma))$. This $\delta$ is also a surjection as $\mathrm{ind}_{K_0}^{G_S}(-)$ is an exact functor (see, for example,
    Exercise 1 in subsection 2.5 of \cite{Bushnell-Henniart}). Now, $\delta$ induces the following maps : 
    $$\delta^{\ast}:=[T\mapsto T\circ \delta]:\mathrm{Hom}_{G_S}(\mathrm{ind}_{K_0}^{G_S}(\sigma),\pi)\to \mathrm{Hom}_{G_S}(\mathrm{ind}_{I_S}^{G_S}(\chi_{\sigma}),\pi),$$and 
    $$\Delta^{\ast}:=[T\mapsto T\circ \delta]:\mathrm{Hom}_{G_S}(\mathrm{ind}_{K_0}^{G_S}(\sigma),\mathrm{ind}_{K_0}^{G_S}(\sigma))\to \mathrm{Hom}_{G_S}(\mathrm{ind}_{I_S}^{G_S}(\chi_{\sigma}),\mathrm{ind}_{K_0}^{G_S}(\sigma)).$$But, as $\delta$ is surjective, both $\delta^{\ast}$ and $\Delta^{\ast}$ are injective. And, clearly from the definitions of the maps $\varphi^{\ast},\,\Phi^{\ast},\,\delta^{\ast},$ and $\Delta^{\ast}$, the following square is commutative : 
    $$\begin{tikzcd}
\mathrm{Hom}_{G_S}(\mathrm{ind}_{K_0}^{G_S}(\sigma),\mathrm{ind}_{K_0}^{G_S}(\sigma)) \arrow{r}{\Phi^{\ast}} \arrow[swap]{d}{\Delta^{\ast}} & \mathrm{Hom}_{G_S}(\mathrm{ind}_{K_0}^{G_S}(\sigma),\pi) \arrow{d}{\delta^{\ast}} \\%
\mathrm{Hom}_{G_S}(\mathrm{ind}_{I_S}^{G_S}(\chi_{\sigma}),\mathrm{ind}_{K_0}^{G_S}(\sigma)) \arrow{r}{\varphi^{\ast}}& \mathrm{Hom}_{G_S}(\mathrm{ind}_{I_S}^{G_S}(\chi_{\sigma}),\pi)
\end{tikzcd}$$Therefore, we conclude that the image $\Phi^{\ast}(\mathrm{End}_{G_S}(\mathrm{ind}_{K_0}^{G_S}(\sigma)))$ of $\Phi^{\ast}$, which is the submodule generated by $\varphi$, is finite dimensional. This completes the proof.
\end{proof}

\section{The case of non-supersingular representations}\label{non-supsing case}

At first, we recall the theory of mod $p$ principal series representations of $G_S$. We start with a smooth $\bar{\mathbb{F}}_p$ character $\eta$ of $B_S$. As the abelianization of $B_S$ is the torus $T_S\cong F^{\times}$, we can consider this $\eta$ as a smooth character of $T_S$ or $F^{\times}$. We then have a short exact sequence of $B_S$ representations : $$0\to V_{\eta}\to \text{Ind}_{B_S}^{G_S}(\eta)\to \eta\to 0,$$ where the map $\text{Ind}_{B_S}^{G_S}(\eta)\to \eta$ is evaluation at the identity matrix, and $V_{\eta}$ is the kernel of this map. We mention the following standard facts : \begin{enumerate}
    \item $V_{\eta}$ is an irreducible representation of $B_S$.
    \item The above short exact sequence splits if and only if $\eta=1,$ the trivial character.
    \item $\text{Ind}_{B_S}^{G_S}(\eta)$ is an irreducible representation of $G_S$ if and only if $\eta\neq 1$. In this case, these representations are called \textit{principal series} of $G_S$.
    \item The $G_S$ representation $\text{Ind}_{B_S}^{G_S}(1)$ is indecomposable and of length $2$, with the trivial representation $1$ as its only subrepresentation and the \textit{Steinberg representation} $\text{St}_{S}:=\frac{\text{Ind}_{B_S}^{G_S}(1)}{1}$ as the only quotient. Hence we have a non-split exact sequence $0\to 1\to \text{Ind}_{B_S}^{G_S}(1)\to \text{St}_S\to 0$ of $G_S$ representations.
    \item As $G_S=B_SI_S(1)\sqcup B_S\beta_0I_S(1)$ (see Lemme 1.7 in \cite{Abdellatif}), the pro-$p$-Iwahori invariants $(\text{Ind}_{B_S}^{G_S}(\eta))^{I_S(1)}$ has dimension $2$ over $\bar{\mathbb{F}}_p$. It is generated by the functions $\ell_{1,\eta}$, supported on $B_SI_S(1)$ and taking value $1$ at $I_2$, and $\ell_{2,\eta}$, supported on $B_S\beta_0I_S(1)$ and taking value $1$ at $\beta_0$. 
\end{enumerate}

The proofs of all these facts can be found in Section $3.4$ of \cite{Abdellatif}.

\begin{lm}\label{only char of sl2 is trivial}
    Let $\pi$ be a smooth representation of $G_S$, and $\bar{\mathbb{F}}_p v$ be a line which is $B_S$-stable. Then $G_S$, and hence $B_S$, acts trivially on this line. Hence, for $\eta\neq 1$ we have $\mathrm{Hom}_{B_S}(\eta,\pi)=0.$
\end{lm}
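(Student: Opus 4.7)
The plan is to show directly that $G_S$ fixes the vector $v$ pointwise, which forces the character by which $B_S$ acts to be trivial. Let $\eta : B_S \to \bar{\mathbb{F}}_p^{\times}$ denote the character encoding the $B_S$-action on the line $\bar{\mathbb{F}}_p v$. The argument will combine three inputs: (i) $\eta$ must factor through the abelianization $T_S$ of $B_S$; (ii) smoothness of $\pi$ makes $v$ fixed by a small open subgroup; and (iii) conjugation by powers of $\alpha_0$ moves arbitrarily large lower-unipotent subgroups into that small subgroup.

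First, because $U_S \subset [B_S, B_S]$, the restriction $\eta|_{U_S}$ is trivial, so $U_S \cdot v = v$. Second, by smoothness of $\pi$, there exists $n \geq 0$ such that $\bar{U}_S(\mathfrak{p}_F^n) \cdot v = v$. Now I carry out the crucial step: take any $\bar{u} \in \bar{U}_S(F)$, say $\bar{u} \in \bar{U}_S(\mathfrak{p}_F^m)$ with $m$ possibly very negative. Since
\[
\alpha_0^k\, \bar{U}_S(\mathfrak{p}_F^m)\, \alpha_0^{-k} = \bar{U}_S(\mathfrak{p}_F^{m+2k}),
\]
I can choose $k$ large enough that $\alpha_0^k \bar{u} \alpha_0^{-k} \in \bar{U}_S(\mathfrak{p}_F^n)$. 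Applying this element to $v$ gives $\alpha_0^k \bar{u} \alpha_0^{-k} \cdot v = v$. Using that $\alpha_0 \in B_S$ acts on the line by the scalar $\eta(\alpha_0)$, the scalars cancel: $\alpha_0^{-k} v = \eta(\alpha_0)^{-k} v$, so $\alpha_0^k \bar{u} v = \eta(\alpha_0)^k v$, and applying $\alpha_0^{-k}$ yields $\bar{u} v = v$. Hence $\bar{U}_S(F)$ fixes $v$.

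Finally, since $G_S = \mathrm{SL}_2(F)$ is generated by its upper and lower unipotent subgroups $U_S(F)$ and $\bar{U}_S(F)$, and both fix $v$, the whole group $G_S$ fixes $v$. In particular $B_S$ fixes $v$, so $\eta = 1$. For the last assertion, suppose $\eta \neq 1$ and $\varphi \in \mathrm{Hom}_{B_S}(\eta, \pi)$ is non-zero; then $\bar{\mathbb{F}}_p \cdot \varphi(1)$ is a $B_S$-stable line in $\pi$ on which $B_S$ acts by $\eta \neq 1$, contradicting what was just proved.

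I expect no serious obstacle: the only point requiring mild care is verifying that the conjugation trick in step three indeed yields $\bar{u} v = v$ after cancelling the scalars coming from $\eta(\alpha_0)^{\pm k}$, and that the enlargement $\bar{U}_S(\mathfrak{p}_F^{m+2k})$ under conjugation by $\alpha_0^k$ actually contains $\alpha_0^k \bar{u} \alpha_0^{-k}$ with the correct sign convention for the exponent.
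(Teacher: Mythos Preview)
Your proof is correct and follows essentially the same approach as the paper's: use smoothness to fix $v$ by a small $\bar{U}_S(\mathfrak{p}_F^n)$, then conjugate by powers of $\alpha_0\in B_S$ (which act on the line by scalars that cancel) to deduce that all of $\bar{U}_S(F)$ fixes $v$, and conclude via generation of $G_S$ by upper and lower unipotents together with $U_S\subset [B_S,B_S]$. If anything, you are slightly more explicit than the paper about the scalar cancellation $\eta(\alpha_0)^{\pm k}$, which is exactly the point your closing paragraph flags as needing care.
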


\begin{proof}
    By smoothness we have some positive integer $m$ such that $\bar{U}_S(\mathfrak{p}_F^m)$ fixes $v$. Consider any arbitrary lower unipotent matrix $\bar{u}(x)\in \bar{U}_S$ for some $x\in F$. There is some $k\in \mathbb{Z}$ such that $y:=x\omega_F^{2k}\in \mathfrak{p}_F^m$. Then $\bar{u}(x)=\alpha_0^k\bar{u}(y)\alpha_0^{-k}$ fixes $v$. The upper and lower unipotent matrices generate $G_S$, and the upper unipotents being the derived subgroup of $B_S$ act trivially on $\bar{\mathbb{F}}_pv$. Therefore, we have that $G_S$ acts trivially on $\bar{\mathbb{F}}_pv$. 
\end{proof}

We have the following immediate corollary, which also follows from the fact that the abelianization of $\mathrm{SL_2}(k)$ for any infinite field $k$ is trivial.

\begin{corl}
    The trivial character is the only smooth $\bar{\mathbb{F}}_p$-character of $G_S$.
\end{corl}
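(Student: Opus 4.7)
The plan is to invoke the preceding lemma directly. Given a smooth $\bar{\mathbb{F}}_p$-character $\chi$ of $G_S$, view it as a one-dimensional smooth representation $\pi = \bar{\mathbb{F}}_p v$ of $G_S$ on which $G_S$ (and hence its subgroup $B_S$) acts by $\chi$. Then $\bar{\mathbb{F}}_p v$ is a $B_S$-stable line inside the smooth $G_S$-representation $\pi$, so the hypotheses of Lemma \ref{only char of sl2 is trivial} are met.

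Applying that lemma gives that $G_S$ acts trivially on $\bar{\mathbb{F}}_p v$, which is to say $\chi(g) = 1$ for every $g \in G_S$. Since $\chi$ was an arbitrary smooth character, this forces $\chi = 1$. There is essentially no obstacle here: all the work has already been done in the lemma, where the fact that $G_S$ is generated by unipotents (combined with $\alpha_0$-conjugation, which contracts any lower unipotent into the smoothness neighborhood of $v$) was used to propagate the triviality from the derived subgroup of $B_S$ to all of $G_S$. The parenthetical alternative argument, that $\mathrm{SL}_2(F)$ is perfect because its abelianization is trivial for $F$ infinite, also yields the statement, but the lemma-based route is immediate given what has been established.
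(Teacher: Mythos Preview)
Your proof is correct and matches the paper's approach exactly: the paper states this corollary as immediate from Lemma~\ref{only char of sl2 is trivial} (and also notes the alternative argument via the triviality of the abelianization of $\mathrm{SL}_2(k)$ for infinite $k$), which is precisely what you have written out.
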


\begin{corl}\label{Hom from line}
    Given a smooth representation $\pi$ of $G_S$, the restriction map induces an isomorphism between the following spaces : $$\mathrm{Hom}_{G_S}(1,\pi)\cong \mathrm{Hom}_{B_S}(1,\pi|_{B_S}).$$
\end{corl}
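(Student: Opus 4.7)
The plan is to show that the natural restriction map $\mathrm{Hom}_{G_S}(1,\pi)\to \mathrm{Hom}_{B_S}(1,\pi|_{B_S})$ is both injective and surjective, with the latter being essentially an immediate consequence of Lemma \ref{only char of sl2 is trivial}.

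First I would observe that the map is well-defined: any $G_S$-equivariant map out of the trivial representation restricts to a $B_S$-equivariant map. Injectivity is immediate, since any $G_S$-intertwiner $\varphi: 1\to \pi$ is determined by the image $\varphi(1)\in \pi$, and if this image vanishes then $\varphi$ is zero; the same is true for $B_S$-intertwiners, so the restriction cannot kill a nonzero map.

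For surjectivity, suppose $\psi \in \mathrm{Hom}_{B_S}(1,\pi|_{B_S})$ is given. Let $v := \psi(1)\in \pi$. Since $\psi$ is $B_S$-equivariant and the source is the trivial character, the vector $v$ is fixed by $B_S$, so the line $\bar{\mathbb{F}}_p\cdot v$ is $B_S$-stable. By Lemma \ref{only char of sl2 is trivial}, $G_S$ acts trivially on this line, so $v$ is in fact $G_S$-fixed. Therefore the map $1\to \pi$ sending $1\mapsto v$ is a well-defined $G_S$-intertwiner whose restriction to $B_S$ is exactly $\psi$. This establishes surjectivity and hence the isomorphism.

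There is no significant obstacle here: the entire content is encoded in the preceding lemma, which already upgrades $B_S$-stable lines to $G_S$-fixed lines. The corollary is simply the functorial reformulation of that fact.
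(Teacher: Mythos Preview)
Your proof is correct and follows essentially the same approach as the paper: both arguments reduce surjectivity to the observation that a $B_S$-stable line is automatically $G_S$-fixed by Lemma \ref{only char of sl2 is trivial}, and your version simply spells out the injectivity step and the identification of intertwiners with fixed vectors more explicitly than the paper's terse two-sentence proof.
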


\begin{proof}
    The existence of a non-zero intertwiner in $\text{Hom}_{B_S}(1,\pi|_{B_S})$ is equivalent to the existence of a $B_S$-stable line in $\pi$. Therefore, the lemma applies and we can extend such an intertwiner trivially to a $G_S$-intertwiner.
\end{proof}

\begin{corl}\label{intertwiner from principal series is injective}
    Let $\pi$ be a smooth representation of $G_S$. Suppose $\eta\neq 1$, and $\varphi\in \mathrm{Hom}_{B_S}(\mathrm{Ind}_{B_S}^{G_S}(\eta),\pi)$ be non-zero. Then $\varphi$ is an injection.
\end{corl}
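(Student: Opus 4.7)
The plan is to analyze the kernel $K := \ker(\varphi)$, which is a $B_S$-subrepresentation of $\mathrm{Ind}_{B_S}^{G_S}(\eta)$, and use the short exact sequence
\[
0 \to V_\eta \to \mathrm{Ind}_{B_S}^{G_S}(\eta) \to \eta \to 0
\]
together with the listed standard facts, namely that $V_\eta$ is $B_S$-irreducible, the sequence fails to split for $\eta \neq 1$, and the conclusion of Lemma \ref{only char of sl2 is trivial} that $\mathrm{Hom}_{B_S}(\eta,\pi) = 0$.

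Since $V_\eta$ is an irreducible $B_S$-representation, the intersection $K \cap V_\eta$ is either $0$ or all of $V_\eta$. First I would dispose of the case $V_\eta \subseteq K$. In this case $K/V_\eta$ is a $B_S$-subrepresentation of the one-dimensional quotient $\eta$, so $K = V_\eta$ or $K = \mathrm{Ind}_{B_S}^{G_S}(\eta)$. The latter contradicts the hypothesis $\varphi \neq 0$, while the former means $\varphi$ descends to a nonzero $B_S$-map $\eta \to \pi$, which is excluded by Lemma \ref{only char of sl2 is trivial}.

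Next I would handle the case $K \cap V_\eta = 0$. Then the composition of $K \hookrightarrow \mathrm{Ind}_{B_S}^{G_S}(\eta) \twoheadrightarrow \eta$ is injective, so $K$ is either zero or the whole $K$ maps isomorphically onto $\eta$. In the latter case, for any $f \in \mathrm{Ind}_{B_S}^{G_S}(\eta)$ I can choose the unique $k \in K$ with the same image in $\eta$ as $f$, and then $f - k \in V_\eta$, giving $\mathrm{Ind}_{B_S}^{G_S}(\eta) = V_\eta \oplus K$ as $B_S$-representations. This is a $B_S$-splitting of the above short exact sequence, contradicting fact (2). Hence $K = 0$ and $\varphi$ is injective.

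There is no real obstacle here: once $V_\eta$ is known to be $B_S$-irreducible and the non-splitting of the sequence is in hand, the argument is a short case analysis on $K \cap V_\eta$. The only point worth writing carefully is the reconstruction of the splitting in the second case, which uses that $K$ surjects onto $\eta$ and intersects $V_\eta$ trivially.
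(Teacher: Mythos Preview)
Your proof is correct and follows essentially the same route as the paper's: a case analysis on $K\cap V_\eta$, using irreducibility of $V_\eta$, the length-2 structure, and Lemma \ref{only char of sl2 is trivial}. The only cosmetic difference is that in the case $K\cap V_\eta=0$, $K\neq 0$ you invoke the non-splitting of the short exact sequence (fact (2)) directly, whereas the paper phrases the contradiction as $K\cong\eta$ together with $\mathrm{Hom}_{B_S}(\eta,\mathrm{Ind}_{B_S}^{G_S}(\eta))=0$; these are equivalent.
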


\begin{proof}
    The proof is same as that of Corollary 5.2 in \cite{PaskunasRestriction}. We will reproduce it to make the article self-contained. So let $0\neq \varphi\in \mathrm{Hom}_{B_S}(\mathrm{Ind}_{B_S}^{G_S}(\eta),\pi)$, and suppose $\mathrm{ker }\varphi\neq 0$. We claim at first that $\mathrm{ker}\varphi=V_{\eta}.$ Otherwise, since $\mathrm{Ind}_{B_S}^{G_S}(\eta)$ as a $B_S$-representation has length $2$, we must have $\mathrm{Ind}_{B_S}^{G_S}(\eta)=\mathrm{ker}\varphi\oplus V_{\eta}.$ This implies $\mathrm{ker}\varphi\cong \eta$ as $B_S$-representations. But this is not possible because $\mathrm{Hom}_{B_S}(\eta,\mathrm{Ind}_{B_S}^{G_S}(\eta))=0$ as $\eta\neq 1$, by Lemma \ref{only char of sl2 is trivial}. But if $\mathrm{ker}\varphi=V_{\eta}$, then $\varphi$ induces a map $\bar{\varphi}\in \mathrm{Hom}_{B_S}(\eta,\pi)$. By Lemma \ref{only char of sl2 is trivial}, $\bar{\varphi}=0$, and hence $\varphi=0$, a contradiction. Hence, $\mathrm{ker}\varphi=0$. 
\end{proof}

\begin{corl}\label{endomorphisms of principal series is one dimensional}
    Let $\eta\neq 1$. Then the restriction map induces as isomorphism $$\mathrm{Hom}_{G_S}(\mathrm{Ind}_{B_S}^{G_S}(\eta),\mathrm{Ind}_{B_S}^{G_S}(\eta))\cong \mathrm{Hom}_{B_S}(\mathrm{Ind}_{B_S}^{G_S}(\eta),\mathrm{Ind}_{B_S}^{G_S}(\eta))$$
\end{corl}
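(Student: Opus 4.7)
The plan is to exhibit the restriction map from the statement as the first factor of a composition whose second factor is known to be injective and whose total is known to be an isomorphism. Since restriction is always a linear map between the two Hom spaces, the main work lies entirely in showing that it is a bijection.

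First, I would apply the contravariant functor $\mathrm{Hom}_{B_S}(-,\mathrm{Ind}_{B_S}^{G_S}(\eta))$ to the short exact sequence $0\to V_{\eta}\to \mathrm{Ind}_{B_S}^{G_S}(\eta)\to \eta\to 0$ of $B_S$-representations recalled at the beginning of the section. The resulting left-exact sequence has leftmost term $\mathrm{Hom}_{B_S}(\eta,\mathrm{Ind}_{B_S}^{G_S}(\eta))$, which vanishes by Lemma \ref{only char of sl2 is trivial} because $\eta\neq 1$. Thus precomposition with the inclusion $V_{\eta}\hookrightarrow \mathrm{Ind}_{B_S}^{G_S}(\eta)$ defines an injection
\[
r:\mathrm{Hom}_{B_S}(\mathrm{Ind}_{B_S}^{G_S}(\eta),\mathrm{Ind}_{B_S}^{G_S}(\eta))\hookrightarrow \mathrm{Hom}_{B_S}(V_{\eta},\mathrm{Ind}_{B_S}^{G_S}(\eta)).
\]

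Next, I would note that the composition of the restriction map $\mathrm{res}_{B_S}$ in the statement with $r$ sends a $G_S$-intertwiner $T$ to its restriction to the subspace $V_{\eta}$. By Theorem \ref{main theorem 2}, applied with $\pi=\mathrm{Ind}_{B_S}^{G_S}(\eta)$, this composite restriction-to-$V_{\eta}$ map is an isomorphism
\[
\mathrm{Hom}_{G_S}(\mathrm{Ind}_{B_S}^{G_S}(\eta),\mathrm{Ind}_{B_S}^{G_S}(\eta))\xrightarrow{\sim} \mathrm{Hom}_{B_S}(V_{\eta},\mathrm{Ind}_{B_S}^{G_S}(\eta)).
\]
Since $r$ is injective and $r\circ\mathrm{res}_{B_S}$ is bijective, a one-line diagram chase forces both $r$ and $\mathrm{res}_{B_S}$ to be bijections, which is exactly what we need.

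I do not anticipate any serious obstacle in this corollary beyond what has already been done: the whole argument is a short formal consequence of Theorem \ref{main theorem 2} (the substantive result of the section) combined with the vanishing $\mathrm{Hom}_{B_S}(\eta,\pi)=0$ for $\eta\neq 1$ guaranteed by Lemma \ref{only char of sl2 is trivial}. All the heavy lifting has been done upstream.
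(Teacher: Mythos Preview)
Your argument is formally correct but circular in the context of this paper. You invoke Theorem~\ref{main theorem 2} (which is Theorem~\ref{lifting B_S intertwiners from V_eta to G_S intertwiners from Ind_BS^GS} in the body), but the proof of that theorem \emph{uses} the present corollary: at the very end, after showing that $\bar{\mathbb{F}}_p[G_S]\cdot(\psi(\ell_{2,\eta}))\cong \mathrm{Ind}_{B_S}^{G_S}(\eta)$, the paper needs to know that the resulting $B_S$-intertwiner into $\pi$ agrees with $\psi$, and for this it appeals to the one-dimensionality of $\mathrm{End}_{B_S}(\mathrm{Ind}_{B_S}^{G_S}(\eta))$ established here. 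So the corollary is genuinely upstream of the main theorem, and you cannot cite the latter.

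The paper's proof avoids this by arguing directly that $\mathrm{End}_{B_S}(\mathrm{Ind}_{B_S}^{G_S}(\eta))$ is one-dimensional: given two nonzero $B_S$-endomorphisms $\varphi_1,\varphi_2$, their restrictions to $V_\eta$ are nonzero by Corollary~\ref{intertwiner from principal series is injective}, have image $V_\eta$ (by the length-$2$ filtration), and hence differ by a scalar $\lambda$ since $\mathrm{End}_{B_S}(V_\eta)=\bar{\mathbb{F}}_p$ by Schur. Then $\varphi_1-\lambda\varphi_2$ is a non-injective $B_S$-endomorphism, hence zero again by Corollary~\ref{intertwiner from principal series is injective}. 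Since $\mathrm{End}_{G_S}(\mathrm{Ind}_{B_S}^{G_S}(\eta))$ is also one-dimensional by Schur (the principal series being $G_S$-irreducible for $\eta\neq 1$), the restriction map is an isomorphism. You should replace your appeal to Theorem~\ref{main theorem 2} with this direct argument.
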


\begin{proof}
    Here also the proof is similar to Corollary 5.3 in \cite{PaskunasRestriction}. We show that $\mathrm{End}_{B_S}(\mathrm{Ind}_{B_S}^{G_S}(\eta))$ is a one dimensional space. For this, let $\varphi_1$ and $\varphi_2$ be two non-zero intertwiners in $\mathrm{End}_{B_S}(\mathrm{Ind}_{B_S}^{G_S}(\eta))$. Then, by the previous Corollary, their restrictions to $V_{\eta}$ gives two non-zero maps in $\mathrm{End}_{B_S}(V_{\eta}).$ Note that $\mathrm{Img}(\varphi_i)=V_{\eta}$, otherwise by the length $2$ condition (as in the proof of the previous Corollary) we will have that $V_{\eta}$ has dimension $1$, which is false.  But $\mathrm{End}_{B_S}(V_{\eta})$ is one dimensional as $V_{\eta}$ is an irreducible $B_S$-representation (see Proposition 2.11 in \cite{Bernstein-Zelevinski}; the proof works over any algebraically closed field). Hence, $\varphi_1|_{V_{\eta}}=\lambda\varphi_2|_{V_{\eta}}$ for some $\lambda\in \bar{\mathbb{F}}_p^{\times}$. But then $\varphi_1-\lambda\varphi_2\in \mathrm{End}_{B_S}(\mathrm{Ind}_{B_S}^{G_S}(\eta))$ is not an injection, and hence must be identically zero.
\end{proof}

We now prove a useful lemma, analogous to  Lemma 5.3 of \cite{xu2024restrictionpmodularrepresentationsu2}, which is, in essence, a rather elegant application of Carter-Lusztig theory.

\begin{lm}\label{generation of non-trivial weight}
    Let $\pi$ be a smooth representation of $G_S$, and $0\neq v\in \pi^{I_S(1)}$. Suppose $I_S$ acts on $v$ by a character. Then, either $\mathcal{S}v=0$, or $\mathcal{S}v$ generates a non-trivial weight of $K_0=\mathrm{SL_2}(\mathcal{O}_F)$.
\end{lm}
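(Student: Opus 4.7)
The plan is to realize $\mathcal{S}v$ as the image of a distinguished Carter--Lusztig generator under a $K_0$-intertwiner into $\pi$ built from $\mathcal{S}_2v$ via Frobenius reciprocity. Let $\chi$ denote the $I_S$-character of $v$. Since $\mathcal{S}v = \mathcal{S}_1(\mathcal{S}_2v)$ by Lemma \ref{S_1 and S_2 preserve I_S(1) invarinats}, I first check that $\mathcal{S}_2v \in \pi^{(I_S,\chi^{w_0})}$. The $I_S(1)$-invariance is part of the cited Lemma. For the character computation, fix $t = \mathrm{diag}(a,a^{-1}) \in T_S(\mathcal{O}_F)$: using $tw_0 = w_0 t^{-1}$, $t^{-1}u(x) = u(a^{-2}x)t^{-1}$, and the commutativity of $t$ with $\alpha_0$, one pushes $t$ through $\mathcal{S}_2v = \sum_\mu w_0 u([\mu]\omega_F)\alpha_0^{-1}v$; the change of variables $\mu \mapsto \bar a^{-2}\mu$ restores the sum to $\mathcal{S}_2v$ up to an $\mathfrak{p}_F^2$-discrepancy between $a^{-2}[\mu]\omega_F$ and $[\bar a^{-2}\mu]\omega_F$, which is absorbed by the $I_S(1)$-invariance of $v$ after conjugation by $\alpha_0^{-1}$ (which divides $\mathfrak{p}_F^2$ down to $\mathcal{O}_F$). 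This yields $t\cdot \mathcal{S}_2v = \chi^{-1}(t)\mathcal{S}_2v = \chi^{w_0}(t)\mathcal{S}_2v$.

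Because $\chi^{w_0}$ factors through $I_S/K_0(1) \cong B(k_F)$ (the Borel of $\Gamma := \mathrm{SL}_2(k_F)$), we have $\mathrm{ind}_{I_S}^{K_0}(\chi^{w_0}) \cong \mathrm{Ind}_B^{\Gamma}(\chi^{w_0})$ as $K_0$-representations. Frobenius reciprocity then yields a $K_0$-intertwiner $\Phi \colon \mathrm{Ind}_B^{\Gamma}(\chi^{w_0}) \to \pi|_{K_0}$ with $\Phi(\varphi_{\chi^{w_0}}) = \mathcal{S}_2v$. Let $\sigma := \Theta_{w_0}^{\emptyset}(\mathrm{Ind}_B^{\Gamma}(\chi)) = T_{w_0}(\mathrm{Ind}_B^{\Gamma}(\chi)) \subset \mathrm{Ind}_B^{\Gamma}(\chi^{w_0})$ be the irreducible submodule generated by $T_{w_0}\varphi_\chi$. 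By Theorem \ref{Carter-Lusztig thory}, $\sigma$ is a weight of $K_0$, and it is \emph{non-trivial} because the trivial weight corresponds to the pair $(1,\{1\})$ while $\sigma$ corresponds to $(\chi,\emptyset)$, and distinct pairs give non-isomorphic weights.

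Applying $\Phi$ to the formula (\ref{action of T_w_0 on generator}) gives
$$\Phi(T_{w_0}\varphi_\chi) = \sum_{\lambda \in \mathbb{F}_q} u([\lambda])w_0^{-1}\cdot \mathcal{S}_2v = \mathcal{S}_1(\mathcal{S}_2v) = \mathcal{S}v.$$
Thus the composition $\sigma \hookrightarrow \mathrm{Ind}_B^{\Gamma}(\chi^{w_0}) \xrightarrow{\Phi} \pi|_{K_0}$ is a $K_0$-intertwiner sending the canonical generator $T_{w_0}\varphi_\chi$ of $\sigma$ to $\mathcal{S}v$, and its image is exactly the $K_0$-subrepresentation $\bar{\mathbb{F}}_p[K_0]\cdot \mathcal{S}v$. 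If $\mathcal{S}v \neq 0$, this composition is non-zero and hence injective by irreducibility of $\sigma$, so $\bar{\mathbb{F}}_p[K_0]\cdot \mathcal{S}v \cong \sigma$, a non-trivial weight of $K_0$.

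The main obstacle is the first step: recognising that the ``half-operator'' $\mathcal{S}_2$ twists the $I_S$-character from $\chi$ to $\chi^{w_0}$. Once this character computation is in hand, the proof is a clean assembly of Frobenius reciprocity with the Carter--Lusztig realisation of weights as submodules of principal series induced from characters of the finite torus; formula (\ref{action of T_w_0 on generator}) is the bridge identifying $\mathcal{S} = \mathcal{S}_1\circ \mathcal{S}_2$ with the Carter--Lusztig operator $T_{w_0}$ after applying $\Phi$.
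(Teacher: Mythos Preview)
Your proof is correct and follows essentially the same approach as the paper's: both build a $K_0$-intertwiner from $\mathrm{Ind}_B^{\Gamma}(\chi^{w_0})$ into $\pi$ via Frobenius reciprocity using the vector $\mathcal{S}_2v$, and then identify $\mathcal{S}v$ with the image of the Carter--Lusztig generator $T_{w_0}\varphi_\chi$ through formula~(\ref{action of T_w_0 on generator}). Your argument is slightly more streamlined in that you invoke the pair classification of Theorem~\ref{Carter-Lusztig thory} directly to see non-triviality (the pair $(\chi,\emptyset)$ is never the trivial pair $(1,\{1\})$), whereas the paper routes through Remark~\ref{steinberg mod p} and the explicit symmetric-power models.
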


\begin{proof}
    Suppose $\mathcal{S}v\neq 0$. We set $v^{\prime}:=\mathcal{S}_2v$ so that $\mathcal{S}v=\mathcal{S}_1v^{\prime}$. By our assumption $v^{\prime}\neq 0$. Now, we recall that $I_S=T_S(\mathcal{O}_F^{\times})I_S(1)$, and since $I_S(1)$ is a pro-$p$-group any character of $I_S$ is trivial on $I_S(1)$. Hence, any character of $I_S$ can be considered as a character of $T_S(\mathcal{O}_F^{\times})$. Therefore, if $I_S$ acts on $v$ by a character $\chi$, and as $I_S(1)$ fixes $v^{\prime}$ by Lemma \ref{S_1 and S_2 preserve I_S(1) invarinats}, $I_S$ will act on $v^{\prime}$ via the character $\chi^{w_0}$, and therefore $I_S$ will act on $\mathcal{S}v$ by $\chi.$
    
    We now consider the $K_0$-representation $\kappa=\bar{\mathbb{F}}_p[K_0]\cdot v^{\prime}$. By Frobenius reciprocity the $I_S$-intertwiner $(1\mapsto v^{\prime}):\chi^{w_0}\to \pi$ corresponds to the $K_0$-intertwiner $\mathrm{ind}_{I_S}^{K_0}(\chi^{w_0})\to \pi$ that takes the generator $\varphi_{\chi^{w_0}}=[1,1]\in \mathrm{ind}_{I_S}^{K_0}(\chi^{w_0})$ to $v^{\prime}$. So we obtain $\kappa$ as a quotient of $\mathrm{ind}_{I_S}^{K_0}(\chi^{w_0})=\mathrm{Ind}_{I_S}^{K_0}(\chi^{w_0})$; this equality is evident from the fact that $K_0$ is compact. This intertwiner maps  $\bar{\mathbb{F}}_p[K_0]\cdot (\mathcal{S}_1\varphi_{\chi^{w_0}})$ onto $\bar{\mathbb{F}}_p[K_0]\cdot (\mathcal{S}v)$. 

    Now, note that any character $\chi$ of $T_S(\mathcal{O}_F^{\times})\cong \mathcal{O}_F^{\times}$ can be considered as a character of $k_F^{\times}$ since $\chi|_{1+\mathfrak{p}_F}=1$, and $\mathcal{O}_F^{\times}/(1+\mathfrak{p}_F)\cong k_F$. Consider the natural map $(f\mapsto \Tilde{f}:=[k\mapsto f(k\,\mathrm{mod}\,\mathfrak{p}_F)]):\mathrm{Ind}_{B}^{\Gamma}(\chi^{w_0})\to \mathrm{Ind}_{I_S}^{K_0}(\chi^{w_0})$. This is a $K_0-$intertwiner, with $K_0$ acting on $\mathrm{Ind}_{B}^{\Gamma}(\chi^{w_0})$ via mod $p$ reduction. This action is obviously smooth as the level one congruence subgroup fixes everything. And, the irreducible representation $\bar{\mathbb{F}}_p[K_0]\cdot (T_{w_0}\varphi_{\chi})$ is mapped onto $\bar{\mathbb{F}}_p[K_0]\cdot (\mathcal{S}_1\varphi_{\chi^{w_0}})$; note the use of equation (\ref{action of T_w_0 on generator}). Therefore, by Carter-Lusztig theory (Theorem \ref{Carter-Lusztig thory}, and Remark \ref{steinberg mod p}), the $K_0$-representation $\bar{\mathbb{F}}_p[K_0]\cdot (\mathcal{S}_1\varphi_{\chi^{w_0}})$, and hence $\bar{\mathbb{F}}_p[K_0]\cdot (\mathcal{S}v)$ is a non-trivial weight of $K_0$. This completes the proof of the lemma.
\end{proof}

\begin{rmk}
    The above lemma is the analogue of Lemma 4.1 of \cite{PaskunasRestriction}. However, in our case we have to deal with the possibility of $\mathcal{S}v=0,$ the analogue of which does not arise in the version of the lemma proved by Paskunas. This additional technical condition (and also the fact that $\mathcal{S}_2v=0$ is also a possibility) makes the proof of one of our main results, Theorem \ref{G_S maps are B_S maps from admissible supersingulars}, considerably more technical than its $\mathrm{GL_2}$ counterpart, which is Theorem 4.4 of \cite{PaskunasRestriction}. This situation also arises in \cite{xu2024restrictionpmodularrepresentationsu2}.
\end{rmk}

The main theorem of this section is the following.

\begin{thm}\label{lifting B_S intertwiners from V_eta to G_S intertwiners from Ind_BS^GS}
    Given a smooth representation $\pi$ of $G_S$, and a smooth character $\eta$ of $B_S$, the restriction map induces an isomorphism between the following spaces of intertwiners : $$\mathrm{Hom}_{G_S}(\mathrm{Ind}_{B_S}^{G_S}(\eta),\pi)\cong \mathrm{Hom}_{B_S}(V_{\eta},\pi|_{B_S}).$$
\end{thm}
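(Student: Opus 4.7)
The proof naturally splits into injectivity and surjectivity of the restriction $\phi\mapsto \phi|_{V_\eta}$; injectivity is essentially formal, while surjectivity requires an explicit construction.

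For injectivity, suppose $\phi\in\mathrm{Hom}_{G_S}(\mathrm{Ind}_{B_S}^{G_S}(\eta),\pi)$ satisfies $\phi|_{V_\eta}=0$. Then $\ker\phi$ is a $G_S$-subrepresentation of $\mathrm{Ind}_{B_S}^{G_S}(\eta)$ containing $V_\eta$. When $\eta\neq 1$ the induced representation is $G_S$-irreducible (fact (3) recalled at the start of Section \ref{non-supsing case}), forcing $\ker\phi=\mathrm{Ind}_{B_S}^{G_S}(\eta)$ and hence $\phi=0$. When $\eta=1$ the only proper $G_S$-subrepresentation of $\mathrm{Ind}_{B_S}^{G_S}(1)$ is the line of constants (fact (4)), which cannot accommodate the infinite-dimensional $V_1$, so the same conclusion follows.

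For surjectivity, given $\psi\in\mathrm{Hom}_{B_S}(V_\eta,\pi|_{B_S})$ I will build a $G_S$-extension $\phi$. If $\psi=0$, take $\phi=0$; otherwise $\psi$ is injective by the $B_S$-irreducibility of $V_\eta$ (fact (1)). My plan exploits the two-dimensional $I_S(1)$-invariant subspace of $\mathrm{Ind}_{B_S}^{G_S}(\eta)$, spanned by $\ell_{1,\eta},\ell_{2,\eta}$ from fact (5), with $\ell_{2,\eta}\in V_\eta^{I_S(1)}$ and $\ell_{1,\eta}\notin V_\eta$: the image $v_2:=\psi(\ell_{2,\eta})\in\pi^{I_S(1)}$ is given (with the same $I_S$-eigencharacter as $\ell_{2,\eta}$, since $\psi$ is $I_S$-equivariant), and I define $\phi(\ell_{1,\eta})$ to be the specific $G_S$-translate of $v_2$ dictated by the pro-$p$-Iwahori Hecke relation that produces $\ell_{1,\eta}$ from $\ell_{2,\eta}$ inside $\mathrm{Ind}_{B_S}^{G_S}(\eta)$ (extracted from Propositions \ref{right action of pro p IHA}--\ref{right action of IHA}). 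Extending by $G_S$-equivariance over the $G_S$-orbits of $\ell_{1,\eta}$ and $\ell_{2,\eta}$ covers all of $\mathrm{Ind}_{B_S}^{G_S}(\eta)$, and checking $\phi|_{V_\eta}=\psi$ reduces by $B_S$-equivariance to agreement on the generator $\ell_{2,\eta}$, which is built in.

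The main obstacle is verifying that this $\phi$ is well-defined, i.e., that the proposed value $\phi(\ell_{1,\eta})$ is consistent with every Hecke-algebra relation that $\ell_{1,\eta}$ satisfies inside $\mathrm{Ind}_{B_S}^{G_S}(\eta)$. These relations reduce to identities in $\pi$ that follow from the $B_S$-equivariance of $\psi$ together with the explicit $w_0$- and $\beta_0$-actions on $\ell_{1,\eta},\ell_{2,\eta}$ computed in Section \ref{Hecke algebras and eigenvalues}, and from the Carter--Lusztig description of weights (Theorem \ref{Carter-Lusztig thory}); Lemma \ref{generation of non-trivial weight} is used to identify the $K_0$-subrepresentation generated by the relevant translates of $v_2$. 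The trivial-character case $\eta=1$ needs an additional step since $\mathrm{Ind}_{B_S}^{G_S}(1)$ is reducible: one first splits off the constants summand using Corollary \ref{Hom from line} and then applies the above construction to the Steinberg-like complement. This parallels Pa\v{s}k\=unas's approach in the $\mathrm{GL}_2$ setting, with the extra subtlety specific to $\mathrm{SL}_2$ being that the ``shift'' operator $\mathcal{S}$ can vanish on $v_2$, requiring a case analysis analogous to the one flagged in the remark following Lemma \ref{generation of non-trivial weight}.
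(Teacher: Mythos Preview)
Your injectivity argument is fine (and slightly cleaner than the paper's), but there is a genuine gap in surjectivity: you assert that $v_2:=\psi(\ell_{2,\eta})\in\pi^{I_S(1)}$ ``since $\psi$ is $I_S$-equivariant''. It is not. The map $\psi$ is only $B_S$-equivariant, and $I_S(1)\not\subset B_S$ (it contains $\bar U_S(\mathfrak p_F)$). A priori $\psi(\ell_{2,\eta})$ is fixed only by $B_S\cap I_S(1)$, and showing that it is in fact fixed by all of $I_S(1)$ is precisely the technical heart of the paper's proof: one uses the identity $\mathcal S\ell_{2,\eta}=\eta(\alpha_0)\ell_{2,\eta}$ (which transports through $\psi$ since $\mathcal S$ is built from $B_S$-translates) and a bootstrap computation to deduce that if $\bar U_S(\mathfrak p_F^{2k+1})$ fixes $\psi(\ell_{2,\eta})$ then so does $\bar U_S(\mathfrak p_F^{2k-1})$, eventually reaching $\bar U_S(\mathfrak p_F)$. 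Without this, your Hecke-theoretic construction of $\phi(\ell_{1,\eta})$ cannot even be formulated, since applying the Hecke operators of Propositions \ref{right action of pro p IHA}--\ref{right action of IHA} to $v_2$ presupposes $v_2\in\pi^{I_S(1)}$.

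A second issue: your plan to ``extend by $G_S$-equivariance over the $G_S$-orbits of $\ell_{1,\eta}$ and $\ell_{2,\eta}$'' and then check well-definedness via Hecke relations is not a complete strategy. The paper avoids this entirely: once $\psi(\ell_{2,\eta})\in\pi^{I_S(1)}$ is established, Lemma \ref{generation of non-trivial weight} identifies $\bar{\mathbb F}_p[K_0]\cdot\psi(\ell_{2,\eta})$ as a non-trivial weight $\sigma$, Frobenius reciprocity and the eigenvalue equation for $\mathcal S$ produce a $G_S$-surjection $\mathrm{ind}_{K_0}^{G_S}(\sigma)/(\tau_\sigma-\eta(\alpha_0))\twoheadrightarrow \bar{\mathbb F}_p[G_S]\cdot\psi(\ell_{2,\eta})$, and Abdellatif's Th\'eor\`eme 3.18 identifies the source with $\mathrm{Ind}_{B_S}^{G_S}(\eta)$. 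Uniqueness of the extension then comes from Corollary \ref{endomorphisms of principal series is one dimensional}. Finally, your closing remark about the $\mathcal Sv=0$ case analysis is misplaced: here $\mathcal S\psi(\ell_{2,\eta})=\eta(\alpha_0)\psi(\ell_{2,\eta})\neq 0$ always, so no such dichotomy arises; that subtlety is relevant only in the supersingular setting of Theorem \ref{G_S maps are B_S maps from admissible supersingulars}.
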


\begin{proof}
    We show at first that the restriction map is injective. So, let $\varphi\in \text{Hom}_{G_S}(\text{Ind}_{B_S}^{G_S}(\eta),\pi)$ vanishes on $V_{\eta}$. Then it factors through $\frac{\text{Ind}_{B_S}^{G_S}(\eta)}{V_{\eta}}\cong \eta$ as a $B_S$-intertwiner. So if $\eta\neq 1$, by the Lemma \ref{only char of sl2 is trivial} we have $\varphi=0.$ We therefore assume $\eta=1$; now, if $\varphi\neq 0$, we get a non-zero $B_S$-intertwiner in $\text{Hom}_{B_S}(1,\pi|_{B_S})$, which by Corollary \ref{Hom from line} lifts trivially to a $G_S$ intertwiner in $\text{Hom}_{G_S}(1,\pi)$. Hence, the image of $\varphi$ is a line in $\pi$ on which $G_S$ acts trivially. Consequently, we have $1$ as a quotient of $\text{Ind}_{B_S}^{G_S}(1)$, which is false.

     Recall that the space $(\text{Ind}_{B_S}^{G_S}(\eta))^{I_S(1)}$ is two dimensional, generated by the functions $\ell_{1,\eta}$ and $\ell_{2,\eta}$, supported on $B_SI_S(1)$ and $B_Sw_0I_S(1)$ respectively. Also it can be shown easily that $I_S$ acts on $\ell_{1,\eta}$ by the character $\eta_{+}:=\eta|_{T_S(\mathcal{O}_F^{\times})}$, and on $\ell_{2,\eta}$ by the character $\eta_{-}:=(\eta|_{T_S(\mathcal{O}_F^{\times})})^{w_0}$ (see Lemme 2.10 in \cite{Abdellatif}). Now, by Lemma \ref{S_1 and S_2 preserve I_S(1) invarinats}, we know that $\mathcal{S}\ell_{2,\eta}$ is a linear combination of $\ell_{1,\eta}$ and $\ell_{2,\eta}$. It is easy to see that $\mathcal{S}\ell_{2,\eta}(I_2)=0$. Next, we check that : $$\mathcal{S}\ell_{2,\eta}(\beta_0)=\sum_{\lambda\in k_F^2}\ell_{2,\eta}(\beta_0\begin{pmatrix}1 & A(\lambda)\\0 & 1\end{pmatrix}\alpha_0^{-1})=\eta(\alpha_0),$$ since for $A(\lambda)\neq 0$ we have $$w_0\begin{pmatrix}1 & A(\lambda)\omega_F^{-2}\\ 0 & 1\end{pmatrix}=\begin{pmatrix}A(\lambda)^{-1}\omega_F^2 & -1\\ 0 & A(\lambda)\omega_F^{-2}\end{pmatrix}\begin{pmatrix}1 & 0\\ A(\lambda)^{-1}\omega_F^2 & 1\end{pmatrix}\in B_SI_S(1).$$ In other words we have $\mathcal{S}\ell_{2,\eta}=\eta(\alpha_0)\ell_{2,\eta}.$ Therefore, by Lemma \ref{generation of non-trivial weight}, we have that $\bar{\mathbb{F}}_p[K_0]\cdot \ell_{2,\eta}=\bar{\mathbb{F}}_p[K_0]\cdot (\mathcal{S}\ell_{2,\eta})$ is a non-trivial weight of $K_0$.

     We now show that the restriction map is surjective. So let $\psi\in \mathrm{Hom}_{B_S}(V_{\eta},\pi)$ be non-zero. The map $\ell_{2,\eta}$ is supported on $B_S\beta_0I_S(1)$ and hence it lies in $V_{\eta}$. Therefore, as $V_{\eta}$ is irreducible, $\ell_{2,\eta}$ generates it as an $\bar{\mathbb{F}}_p[B_S]$-module, and hence $\psi(\ell_{2,\eta})\neq 0$. Now, $\psi(\ell_{2,\eta})$ is fixed by $B_S\cap I_S(1)$. Also, $\psi(\mathcal{S}\ell_{2,\eta})=\mathcal{S}\psi(\ell_{2,\eta})=\eta(\alpha_0)\psi(\ell_{2,\eta}),$ that is 

     \begin{equation}\label{S acts on psi(l_2) by scalar}
\psi(\ell_{2,\eta})=\eta(\alpha_0)^{-1}\mathcal{S}\psi(\ell_{2,\eta})
     \end{equation}
     
      Now by smoothness the vector $\psi(\ell_{2,\eta})$ is fixed by $\bar{U}_S(\mathfrak{p}_F^{2k+1})$ for some large $k$. We show that $\bar{U}_S(\mathfrak{p}_F^{2k-1})$ fixes $\psi(\ell_{2,\eta})$ by showing that it fixes $\mathcal{S}\psi(\ell_{2,\eta})$. For this, at first we verify the following claim.

     \textit{Claim : }$U_S(\mathfrak{p}_F^{2k-1})$ fixes $\mathcal{S}_2\psi(\ell_{2,\eta}).$

     \textit{Proof of the Claim : } Take some $b\in \mathfrak{p}_F^{2k-1}$. Then we have :
     \begin{equation*}
     \begin{split}
         &\begin{pmatrix}1 & b\\ 0 & 1\end{pmatrix}\mathcal{S}_2\psi(\ell_{2,\eta})=\sum_{\mu\in k_F}w_0\begin{pmatrix}1 & 0\\ -b & 1\end{pmatrix}\begin{pmatrix}1 & [\mu]\omega_F\\ 0 & 1\end{pmatrix}\alpha_0^{-1}\psi(\ell_{2,\eta})\\
         &=\sum_{\mu\in k_F}w_0\begin{pmatrix}1 & [\mu]\omega_F(1-[\mu]\omega_Fb)^{-1}\\ 0 & 1\end{pmatrix}\begin{pmatrix}(1-[\mu]\omega_Fb)^{-1} & 0\\ 0 & (1-[\mu]\omega_Fb)\end{pmatrix}\begin{pmatrix}1 & 0\\-b(1-[\mu]\omega_Fb)^{-1} & 1\end{pmatrix}\alpha_0^{-1}\psi(\ell_{2,\eta})\\
         &=\sum_{\mu\in k_F}w_0\begin{pmatrix}1 & [\mu]\omega_F(1-[\mu]\omega_Fb)^{-1}\\ 0 & 1\end{pmatrix}\alpha_0^{-1}\psi(\ell_{2,\eta})=\sum_{\mu\in k_F}w_0\begin{pmatrix}1 & [\mu]\omega_F\\ 0 & 1\end{pmatrix}\begin{pmatrix}1 & \ast \omega_F^2\\ 0 & 1\end{pmatrix}\alpha_0^{-1}\psi(\ell_{2.\eta})\\
         &= \sum_{\mu\in k_F}w_0\begin{pmatrix}1 & [\mu]\omega_F\\ 0 & 1\end{pmatrix}\alpha_0^{-1}\psi(\ell_{2,\eta})=\mathcal{S}_2\psi(\ell_{2,\eta}).\text{ ///}
     \end{split}   
     \end{equation*}
     Now, if $b\in \mathfrak{p}_F^{2k-1}$ then we have 
     \begin{equation*}
         \begin{split}
             &\begin{pmatrix}1 & 0\\ b & 1\end{pmatrix}\mathcal{S}\psi(\ell_{2,\eta})=\begin{pmatrix}1 & 0\\ b & 1\end{pmatrix}\mathcal{S}_1(\mathcal{S}_2\psi(\ell_{2,\eta}))\\
             &=\sum_{\lambda\in k_F}\begin{pmatrix}1 & 0\\ b & 1\end{pmatrix}\begin{pmatrix}1 & [\lambda]\\ 0 & 1\end{pmatrix}w_0^{-1}\mathcal{S}_2\psi(\ell_{2,\eta})
             =\sum_{\lambda\in k_F}\begin{pmatrix}1 & [\lambda](1+[\lambda]b)^{-1}\\ 0 & 1\end{pmatrix}w_0^{-1}\mathcal{S}_2\psi(\ell_{2,\eta})\\
             &=\sum_{\lambda\in k_F}\begin{pmatrix}1 & [\lambda]+b_{\lambda}^{\prime}\\ 0 & 1\end{pmatrix}w_0^{-1}\mathcal{S}_2\psi(\ell_{2,\eta})\\
             &=\sum_{\lambda\in k_F}\begin{pmatrix}1 & [\lambda]\\ 0 & 1\end{pmatrix}w_0^{-1}\sum_{\mu\in k_F}w_0\begin{pmatrix}1 & [\mu]\omega_F+b_{\lambda}^{\prime}\\ 0 & 1\end{pmatrix}\alpha_0^{-1}\psi(\ell_{2,\eta})\\
             &=\sum_{\lambda\in k_F}\begin{pmatrix}1 & [\lambda]\\ 0 & 1\end{pmatrix}w_0^{-1}\sum_{\mu\in k_F}w_0\begin{pmatrix}1 & [\mu]\omega_F\\ 0 & 1\end{pmatrix}\begin{pmatrix}1 & \ast\omega_F^2\\ 0 & 1\end{pmatrix}\alpha_0^{-1}\psi(\ell_{2,\eta})=(\mathcal{S}_1\circ\mathcal{S}_2)(\psi(\ell_{2,\eta}))=\mathcal{S}\psi(\ell_{2,\eta}).
         \end{split}
     \end{equation*}
     Here $\ast\omega_F^2$ denotes an element of $\mathfrak{p}_F^2$, and in the fourth equality above $b_{\lambda}^{\prime}$ is a element of $\mathfrak{p}_F^{2k-1}\subset \mathfrak{p}_F$. We apply the above several times to deduce finally that $\bar{U}_S(\mathfrak{p}_F)$ fixes $\mathcal{S}\psi(\ell_{2,\eta})$, and hence $\psi(\ell_{2,\eta})$. By applying the Iwahori decomposition $I_S(1)=U_S(\mathcal{O}_F)\times T_S(1+\mathfrak{p}_F)\times \bar{U}_S(\mathfrak{p}_F)$, we have that $\psi(\ell_{2,\eta})$ is fixed by $I_S(1)$. Now, as $I_S=T_S(\mathcal{O}_F^{\times})I_S(1)$ acts on $\ell_{2,\eta}$ by $\eta_{-}$, therefore $I_S$ acts on $\psi(\ell_{2,\eta})$ by $\eta_{-}$ as well. By appealing to Lemma \ref{generation of non-trivial weight} and Carter-Lusztig theory (Theorem \ref{Carter-Lusztig thory} and Remark \ref{steinberg mod p}), we have $\bar{\mathbb{F}}_p[K_0]\cdot \ell_{2,\eta}\cong\bar{\mathbb{F}}_p[K_0]\cdot (\psi(\ell_{2,\eta}))$, as weights of $K_0$, since the weights are uniquely determined by the character given via the action of $I_S$. Also by Lemma \ref{generation of non-trivial weight}, these weights are non-trivial. 

     Set $\sigma:=\bar{\mathbb{F}}_p[K_0]\cdot (\psi(\ell_{2,\eta}))\subset \pi$; then, by Frobenius reciprocity $\bar{\mathbb{F}}_p[G_S]\cdot (\psi(\ell_{2,\eta}))$ is a quotient of $\mathrm{ind}_{K_0}^{G_S}(\sigma)$, and by equation (\ref{S acts on psi(l_2) by scalar}) it is therefore a quotient of $$\frac{\mathrm{ind}_{K_0}^{G_S}(\sigma)}{(\tau_{\sigma}-\eta(\alpha_0)\cdot \mathrm{Id})(\mathrm{ind}_{K_0}^{G_S}(\sigma))}\cong \mathrm{Ind}_{B_S}^{G_S}(\eta),$$ where the above isomorphism of $G_S-$representations is by Théorème 3.18 of \cite{Abdellatif}. Hence, $\bar{\mathbb{F}}_p[G_S]\cdot (\psi(\ell_{2,\eta}))$ is isomorphic to $\mathrm{Ind}_{B_S}^{G_S}(\eta)$ if $\eta\neq 1$, or $\bar{\mathbb{F}}_p[G_S]\cdot (\psi(\ell_{2,\eta}))$ is isomorphic to $V_{1}$ when $\eta=1$. In both cases, up to scalar multiples, there is only one intertwiner by Corollary \ref{endomorphisms of principal series is one dimensional}; we consider the one whose restriction to $V_{\eta}$ coincides with $\psi$. This proves that the restriction map is surjective.
\end{proof}

The proof of the following corollary is similar to that of Corollary 5.5 in \cite{PaskunasRestriction}.

\begin{corl}\label{intertwiners from principal series}
    Let $\eta\neq 1$ and let $\pi$ be a smooth representation of $G_S$. Then we have $$\mathrm{Hom}_{G_S}(\mathrm{Ind}_{B_S}^{G_S}(\eta),\pi)\cong \mathrm{Hom}_{B_S}(\mathrm{Ind}_{B_S}^{G_S}(\eta),\pi).$$
\end{corl}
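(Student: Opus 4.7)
The plan is to reduce this corollary to Theorem \ref{lifting B_S intertwiners from V_eta to G_S intertwiners from Ind_BS^GS} together with a vanishing argument. Specifically, I would factor the ``double restriction'' map $\mathrm{Hom}_{G_S}(\mathrm{Ind}_{B_S}^{G_S}(\eta),\pi)\to \mathrm{Hom}_{B_S}(V_\eta,\pi|_{B_S})$ established there through the intermediate space $\mathrm{Hom}_{B_S}(\mathrm{Ind}_{B_S}^{G_S}(\eta),\pi)$, and then use the fact that $\mathrm{Hom}_{B_S}(\eta,\pi)$ vanishes to conclude that both resulting restriction maps are isomorphisms.

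In more detail, I would first apply the left-exact functor $\mathrm{Hom}_{B_S}(-,\pi)$ to the short exact sequence $0\to V_\eta\to \mathrm{Ind}_{B_S}^{G_S}(\eta)\to \eta\to 0$ of $B_S$-representations to obtain the exact sequence
$$0\to \mathrm{Hom}_{B_S}(\eta,\pi)\to \mathrm{Hom}_{B_S}(\mathrm{Ind}_{B_S}^{G_S}(\eta),\pi)\xrightarrow{r_2}\mathrm{Hom}_{B_S}(V_\eta,\pi).$$
Since $\eta\neq 1$ by hypothesis, the leftmost term vanishes by the final assertion of Lemma \ref{only char of sl2 is trivial}, so the restriction map $r_2$ is injective.

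Now I would arrange the relevant restriction maps into the commutative triangle
$$\mathrm{Hom}_{G_S}(\mathrm{Ind}_{B_S}^{G_S}(\eta),\pi)\xrightarrow{r_1}\mathrm{Hom}_{B_S}(\mathrm{Ind}_{B_S}^{G_S}(\eta),\pi)\xrightarrow{r_2}\mathrm{Hom}_{B_S}(V_\eta,\pi),$$
whose composition $r_2\circ r_1$ sends a $G_S$-intertwiner $\varphi$ to $\varphi|_{V_\eta}$, and is therefore precisely the isomorphism supplied by Theorem \ref{lifting B_S intertwiners from V_eta to G_S intertwiners from Ind_BS^GS}. A standard diagram chase finishes the proof: surjectivity of $r_2\circ r_1$ forces $r_2$ to be surjective as well, and combined with the injectivity of $r_2$ already obtained, $r_2$ is a bijection; then $r_1=r_2^{-1}\circ(r_2\circ r_1)$ is also a bijection, which is exactly the claimed isomorphism. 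No genuine obstacle arises, as everything reduces to the two ingredients already in hand, namely Theorem \ref{lifting B_S intertwiners from V_eta to G_S intertwiners from Ind_BS^GS} and the vanishing $\mathrm{Hom}_{B_S}(\eta,\pi)=0$ from Lemma \ref{only char of sl2 is trivial}.
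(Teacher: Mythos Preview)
Your proof is correct and, in fact, cleaner than the paper's. Both arguments rely on Theorem \ref{lifting B_S intertwiners from V_eta to G_S intertwiners from Ind_BS^GS}, but they deploy it differently.

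The paper argues surjectivity of $r_1$ by hand: given a nonzero $\psi\in\mathrm{Hom}_{B_S}(\mathrm{Ind}_{B_S}^{G_S}(\eta),\pi)$, it uses Corollary \ref{intertwiner from principal series is injective} to show that the image of $\psi$ lands in the $G_S$-submodule $\bar{\mathbb{F}}_p[G_S]\cdot\psi(V_\eta)$, then applies Theorem \ref{lifting B_S intertwiners from V_eta to G_S intertwiners from Ind_BS^GS} to identify this submodule with $\mathrm{Ind}_{B_S}^{G_S}(\eta)$, and finally invokes Corollary \ref{endomorphisms of principal series is one dimensional} to conclude that $\psi$ itself is $G_S$-linear. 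Your argument bypasses Corollaries \ref{intertwiner from principal series is injective} and \ref{endomorphisms of principal series is one dimensional} entirely by factoring the isomorphism of Theorem \ref{lifting B_S intertwiners from V_eta to G_S intertwiners from Ind_BS^GS} as $r_2\circ r_1$ and using only the vanishing $\mathrm{Hom}_{B_S}(\eta,\pi)=0$ from Lemma \ref{only char of sl2 is trivial} to make $r_2$ injective; the rest is a formal diagram chase. What the paper's approach buys is a more concrete picture of why a $B_S$-map is automatically $G_S$-linear (its image is already a copy of the principal series), whereas your approach is shorter and makes transparent that the corollary is a purely formal consequence of Theorem \ref{lifting B_S intertwiners from V_eta to G_S intertwiners from Ind_BS^GS} and the vanishing lemma.
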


\begin{proof}
    Clearly, the restriction map is an injection as $\mathrm{Ind}_{B_S}^{G_S}(\eta)$ is an irreducible $G_S$-representation. Now, let $0\neq \psi\in \mathrm{Hom}_{B_S}(\mathrm{Ind}_{B_S}^{G_S}(\eta),\pi)$. Then by Corollary \ref{intertwiner from principal series is injective}, the following composition of $B_S$-intertwiners is zero  : $\mathrm{Ind}_{B_S}^{G_S}(\eta)\to \pi\to \frac{\pi}{\bar{\mathbb{F}}_p[G_S]\cdot (\psi(V_{\eta}))}.$ Hence, the image of $\psi$ is contained in $\bar{\mathbb{F}}_p[G_S]\cdot (\psi(V_{\eta}))$. This means we can consider $\psi|_{V_{\eta}}$ to be a non-zero element of the space of $B_S$-intertwiners $\mathrm{Hom}_{B_S}(V_{\eta},\bar{\mathbb{F}}_p[G_S]\cdot (\psi(V_{\eta}))).$ By Theorem \ref{lifting B_S intertwiners from V_eta to G_S intertwiners from Ind_BS^GS}, we have that $\bar{\mathbb{F}}_p[G_S]\cdot (\psi(V_{\eta}))$ is a quotient of $\mathrm{Ind}_{B_S}^{G_S}(\eta)$, and as $\mathrm{Ind}_{B_S}^{G_S}(\eta)$ is an irreducible $G_S$-representation, it is in fact isomorphic to $\bar{\mathbb{F}}_p[G_S]\cdot (\psi(V_{\eta}))$. Hence, by Corollary \ref{endomorphisms of principal series is one dimensional}, $\psi$ is $G_S$-linear as well.
\end{proof}

Finally, we note the following corollary which is immediate from Theorem \ref{lifting B_S intertwiners from V_eta to G_S intertwiners from Ind_BS^GS}, by noting that $\mathrm{St}_{S}|_{B_S}\cong V_{1}$ as $B_S$-representations. The later follows from the facts stated in the beginning of this Section.

\begin{corl}\label{intertwiners from steinberg}
    Let $\pi$ be a smooth representation of $G_S$. Then, we have $\mathrm{Hom}_{G_S}(\mathrm{Ind}_{B_S}^{G_S}(1),\pi)\cong \mathrm{Hom}_{B_S}(\mathrm{St}_S,\pi)$.
\end{corl}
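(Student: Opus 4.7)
The plan is to reduce this corollary to Theorem \ref{lifting B_S intertwiners from V_eta to G_S intertwiners from Ind_BS^GS} applied with $\eta = 1$, which gives the isomorphism
$$\mathrm{Hom}_{G_S}(\mathrm{Ind}_{B_S}^{G_S}(1),\pi) \cong \mathrm{Hom}_{B_S}(V_1,\pi|_{B_S}).$$
Thus the only thing to do is to exhibit a $B_S$-equivariant isomorphism $\mathrm{St}_S|_{B_S} \cong V_1$, whereupon composing the two isomorphisms gives the desired identification.

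To obtain $\mathrm{St}_S|_{B_S} \cong V_1$, I would examine the evaluation-at-identity short exact sequence of $B_S$-representations
$$0 \to V_1 \to \mathrm{Ind}_{B_S}^{G_S}(1) \to 1 \to 0$$
and exhibit a splitting via the constant function $\mathbf{1} \in \mathrm{Ind}_{B_S}^{G_S}(1)$. Since $\mathbf{1}$ is $G_S$-fixed and takes value $1$ at the identity, it provides a $G_S$-equivariant (hence $B_S$-equivariant) section of the evaluation map, yielding $\mathrm{Ind}_{B_S}^{G_S}(1)|_{B_S} \cong V_1 \oplus \bar{\mathbb{F}}_p \cdot \mathbf{1}$. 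Now by fact (4) recalled at the start of this section, the line $\bar{\mathbb{F}}_p \cdot \mathbf{1}$ is precisely the unique $G_S$-trivial subrepresentation of $\mathrm{Ind}_{B_S}^{G_S}(1)$, and $\mathrm{St}_S$ is defined as the quotient by it. Passing to this quotient in the $B_S$-decomposition above yields $\mathrm{St}_S|_{B_S} \cong V_1$ as $B_S$-representations.

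There is essentially no obstacle here: the substance of the argument has already been carried out in the proof of Theorem \ref{lifting B_S intertwiners from V_eta to G_S intertwiners from Ind_BS^GS}, and the only additional point to confirm is that the $B_S$-complement to $V_1$ inside $\mathrm{Ind}_{B_S}^{G_S}(1)$ provided by the constant functions coincides with the $G_S$-trivial subrepresentation—and this is immediate from the definition of $\mathrm{St}_S$.
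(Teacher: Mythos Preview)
Your proposal is correct and follows essentially the same approach as the paper: apply Theorem \ref{lifting B_S intertwiners from V_eta to G_S intertwiners from Ind_BS^GS} with $\eta=1$ and then invoke the $B_S$-isomorphism $\mathrm{St}_S|_{B_S}\cong V_1$. The paper simply cites this isomorphism as a consequence of the facts listed at the start of the section, while you spell out the splitting via the constant function $\mathbf{1}$ explicitly; both arguments are the same in substance.
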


\begin{rmk}
    Letting $\pi=\mathrm{Ind}_{B_S}^{G_S}(1)$, we can see that the above result cannot be improved if we replace $\mathrm{St}_S$ with $\mathrm{Ind}_{B_S}^{G_S}(1)$.
\end{rmk}

\section{The case of Supersingular representations}\label{supsing case}

\subsection{Definition of supersingular representations}

Recall that in Subsection \ref{spherical Hecke alg structure} we mentioned the fact that the spherical Hecke algebra $\mathcal{H}(G_S,K_0,\sigma)=\bar{\mathbb{F}}_p[\tau]$ for an operator $\tau$ whose action on the standard functions was given by an explicit formula. We set $\tau_{\sigma}:=\begin{cases} 
      \tau ,& \sigma\neq 1 \\
      \tau+\text{Id}, & \sigma=1. 
   \end{cases}$ We still have $\mathcal{H}(G_S,K_0,\sigma)=\bar{\mathbb{F}}_p[\tau_{\sigma}].$ In a similar manner we can show that the spherical Hecke algebra with respect to the other maximal compact subgroup $K_1$, denoted $\mathcal{H}(G_S,K_1,\sigma^{\alpha})$ is generated as a polynomial algebra in one variable. More precisely, we have $\mathcal{H}(G_S,K_1,\sigma^{\alpha})=\bar{\mathbb{F}}_p[\tau_{\sigma^{\alpha}}^1]$, where the operator $\tau_{\sigma^{\alpha}}^1$ is the analogue of $\tau_{\sigma}$. Here, we use the notation $\alpha$ for the matrix $\mathrm{diag}(1,\omega_F)\in \mathrm{GL}_2(F)$, and $\sigma^{\alpha}$ denotes the twist of $\sigma$ by the inner automorphism given by $\alpha$. We now give the following definition.

\begin{defn}
\begin{enumerate}
    \item A smooth irreducible $\bar{\mathbb{F}}_p[G_S]$-module $\pi$ is said to be \textbf{supersingular with respect to $K_0$} or simply $K_0$-\textbf{supersingular} if it is a quotient of the $\bar{\mathbb{F}}_p[G_S]$-module $\frac{\mathrm{ind}_{K_0}^{G_S}(\sigma)}{\tau_{\sigma}(\mathrm{ind}_{K_0}^{G_S}(\sigma))}.$

    \item A smooth irreducible $\bar{\mathbb{F}}_p[G_S]$-module $\pi$ is said to be \textbf{supersingular with respect to $K_1$} or simply $K_1$-\textbf{supersingular} if it is a quotient of the $\bar{\mathbb{F}}_p[G_S]$-module $\frac{\mathrm{ind}_{K_1}^{G_S}(\sigma^{\alpha})}{\tau_{\sigma^{\alpha}}^1(\mathrm{ind}_{K_1}^{G_S}(\sigma^{\alpha}))}.$
\end{enumerate}
\end{defn}

We have following result which is proved in Proposition $3.20$ of \cite{Abdellatif}.

\begin{prop}\label{parameterization form of supersingulars}
    If $\pi$ is supersingular with respect to $K_0$, and a quotient of the $\bar{\mathbb{F}}_p[G_S]$-module $\frac{\mathrm{ind}_{K_0}^{G_S}(\sigma)}{(\tau_{\sigma}-\lambda\cdot \mathrm{Id})(\mathrm{ind}_{K_0}^{G_S}(\sigma))}$, then $\lambda=0$.
\end{prop}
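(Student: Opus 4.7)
The plan is to argue by contradiction, so assume $\lambda \neq 0$. The key first observation is that Théorème 3.18 of \cite{Abdellatif} (the same isomorphism already invoked in the proof of Theorem \ref{lifting B_S intertwiners from V_eta to G_S intertwiners from Ind_BS^GS}) gives an isomorphism of $G_S$-representations
\[
\frac{\mathrm{ind}_{K_0}^{G_S}(\sigma)}{(\tau_{\sigma}-\lambda\cdot \mathrm{Id})(\mathrm{ind}_{K_0}^{G_S}(\sigma))} \;\cong\; \mathrm{Ind}_{B_S}^{G_S}(\eta),
\]
for the unique smooth character $\eta$ of $T_S$ with $\eta(\alpha_0)=\lambda$ and $\eta|_{T_S(\mathcal{O}_F^{\times})}$ determined by the character $\chi_{\sigma}$ attached to $\sigma$. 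Hence $\pi$ is an irreducible $G_S$-quotient of $\mathrm{Ind}_{B_S}^{G_S}(\eta)$. By the standard facts (3) and (4) on principal series recalled at the beginning of Section \ref{non-supsing case}, this forces $\pi\cong \mathrm{Ind}_{B_S}^{G_S}(\eta)$ when $\eta\neq 1$, and $\pi\cong \mathrm{St}_{S}$ when $\eta=1$.

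The remainder of the proof consists of showing that neither an irreducible principal series nor the Steinberg $\mathrm{St}_{S}$ can itself be supersingular. By hypothesis there exist a weight $\sigma_0$ and a non-zero $\varphi_0\in \mathrm{Hom}_{G_S}(\mathrm{ind}_{K_0}^{G_S}(\sigma_0),\pi)$ with $\varphi_0\cdot \tau_{\sigma_0}=0$. Via Frobenius reciprocity this produces a non-zero vector $v_0:=\varphi_0([1,v_{\sigma_0}])\in \pi^{I_S(1)}$ on which $I_S$ acts by $\chi_{\sigma_0}$, and the vanishing $\varphi_0\cdot\tau_{\sigma_0}=0$ translates, via the explicit formulas for $\tau_{\sigma_0}$ from Subsection \ref{spherical Hecke alg structure}, into the identity $\mathcal{S}(v_0)=0$ when $\sigma_0\neq 1$, respectively into an identity of the form $\mathcal{S}(v_0)+\mathcal{S}'(v_0)+v_0=0$ when $\sigma_0=1$, where $\mathcal{S}'$ denotes the lower-triangular summand appearing in the second formula of Subsection \ref{spherical Hecke alg structure}.

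I would then compute $\mathcal{S}$ directly on the two-dimensional subspace $(\mathrm{Ind}_{B_S}^{G_S}(\eta))^{I_S(1)}=\bar{\mathbb{F}}_p\,\ell_{1,\eta}\oplus \bar{\mathbb{F}}_p\,\ell_{2,\eta}$. The identity $\mathcal{S}(\ell_{2,\eta})=\eta(\alpha_0)\,\ell_{2,\eta}=\lambda\,\ell_{2,\eta}$ is verified verbatim as in the proof of Theorem \ref{lifting B_S intertwiners from V_eta to G_S intertwiners from Ind_BS^GS}, and a symmetric Bruhat/Iwahori-decomposition computation evaluating $\mathcal{S}(\ell_{1,\eta})(\beta_0)=\sum_{\lambda'\in k_F^2}\ell_{1,\eta}(\beta_0\,u_{A(\lambda')}\,\alpha_0^{-1})$ using $G_S=B_SI_S(1)\sqcup B_S\beta_0 I_S(1)$ produces a non-zero scalar relation for $\mathcal{S}$ on $\ell_{1,\eta}$ as well. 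Since Proposition \ref{basis of I_S,chi invariants}, applied inside $\pi^{I_S(1)}$, pins $v_0$ down to a single eigenline in this two-dimensional space, these computations show that $\mathcal{S}$ acts invertibly on every $I_S$-isotypic component of $\pi^{I_S(1)}$ whenever $\lambda\neq 0$, contradicting $\mathcal{S}(v_0)=0$ (or its $\sigma_0=1$ analogue). The Steinberg case $\pi\cong\mathrm{St}_S=\mathrm{Ind}_{B_S}^{G_S}(1)/\mathbf{1}$ is handled by descending the same computation to the quotient, using that the image of $\ell_{2,1}$ remains non-zero in $\mathrm{St}_S$.

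The main obstacle is the last, explicit eigenvalue computation, and in particular the bookkeeping for the degenerate sub-cases $\sigma_0=1$ and $\pi=\mathrm{St}_S$, where the elementary Bruhat reductions are entirely of the same flavour as those already carried out in Propositions \ref{action of degenerate IHA on isoty-comp of cpt ind} and \ref{action of non-degenerate IHA on isoty-comp of cpt ind} but must be reproduced inside the principal series (and its Steinberg quotient). No conceptual difficulty arises in these reductions — only careful enumeration of cosets — so the proof should conclude with $\lambda=0$ as required.
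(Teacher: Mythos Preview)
The paper does not give its own argument for this proposition; it simply records that the statement is Proposition~3.20 of \cite{Abdellatif}. Your overall strategy---use Th\'eor\`eme~3.18 of \cite{Abdellatif} to identify the $\lambda\neq 0$ quotient with a principal series and then show that no irreducible subquotient of a principal series can be supersingular---is exactly the natural one, and it is essentially what Abdellatif does.

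There is, however, a genuine gap in the computation you sketch. Your claim that ``$\mathcal{S}$ acts invertibly on every $I_S$-isotypic component of $\pi^{I_S(1)}$'' is false. Since every $u_{A(\lambda')}\alpha_0^{-1}$ lies in $B_S$, one gets
\[
\mathcal{S}(\ell_{1,\eta})(I_2)=\sum_{\lambda'\in k_F^2}\ell_{1,\eta}\bigl(u_{A(\lambda')}\alpha_0^{-1}\bigr)=\sum_{\lambda'\in k_F^2}\eta(\alpha_0^{-1})=q^2\,\eta(\alpha_0)^{-1}=0,
\]
so the $\ell_{1,\eta}$-coefficient of $\mathcal{S}(\ell_{1,\eta})$ is always zero and the matrix of $\mathcal{S}$ on $(\mathrm{Ind}_{B_S}^{G_S}\eta)^{I_S(1)}$ is singular; when $\eta_+\neq\eta_-$ one has $\mathcal{S}(\ell_{1,\eta})=0$ outright. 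Thus your argument does not exclude the possibility $v_0\in\bar{\mathbb{F}}_p\,\ell_{1,\eta}$. The missing ingredient is that $v_0=\varphi_0([1,v_{\sigma_0}])$ must generate an \emph{irreducible} $K_0$-subrepresentation isomorphic to $\sigma_0$; but $\bar{\mathbb{F}}_p[K_0]\cdot\ell_{1,\eta}$ contains $w_0\cdot\ell_{1,\eta}\in\bar{\mathbb{F}}_p^{\times}\ell_{2,\eta}$ and hence has two-dimensional $I_S(1)$-invariants, so it is not a weight. Equivalently, under the identification $\pi|_{K_0}\cong\mathrm{Ind}_B^{\Gamma}(\eta_+)$ Carter--Lusztig theory (Theorem~\ref{Carter-Lusztig thory}) forces the $I_S(1)$-fixed line of any irreducible $K_0$-sub to be (a scalar multiple of) $\ell_{2,\eta}$, on which $\mathcal{S}$ acts by $\eta(\alpha_0)=\lambda\neq 0$; this is the actual mechanism, and once it is in place the cases $\sigma_0=1$ and $\pi=\mathrm{St}_S$ no longer require separate bookkeeping.
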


We also note the following result which follows from using Proposition 3.23 of \cite{Abdellatif}, with the definition above.

\begin{prop}\label{pi is supsing wrt K_0 iff pi^alpha is supsing wrt K_1}
    Let $\pi$ be a smooth irreducible $\bar{\mathbb{F}}_p$-representation of $G_S$. Then, $\pi$ is $K_0$-supersingular if and only if $\pi^{\alpha}$ is $K_1$-supersingular.
\end{prop}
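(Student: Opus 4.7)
The plan is to exploit the fact that $\alpha := \mathrm{diag}(1,\omega_F) \in \mathrm{GL}_2(F)$ normalizes $G_S = \mathrm{SL}_2(F)$ and carries $K_0$ to $K_1$ by conjugation (this is the defining relation $K_1 = \alpha K_0 \alpha^{-1}$). Consequently, the $\alpha$-twist $\pi \mapsto \pi^{\alpha}$, defined on the same underlying vector space by $\pi^{\alpha}(g) := \pi(\alpha^{-1} g \alpha)$, is an auto-equivalence of the category of smooth $\bar{\mathbb{F}}_p[G_S]$-modules. It preserves smoothness, irreducibility, and surjections, and interchanges the roles of $K_0$ and $K_1$.

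The first key step is to establish the $G_S$-equivariant isomorphism
$$
(\mathrm{ind}_{K_0}^{G_S}(\sigma))^{\alpha} \;\cong\; \mathrm{ind}_{K_1}^{G_S}(\sigma^{\alpha}),
$$
via the map $\Phi(f)(g) := f(\alpha^{-1} g \alpha)$. Writing out the definitions, $\Phi(f)(k_1 g) = \sigma(\alpha^{-1} k_1 \alpha) f(\alpha^{-1} g \alpha) = \sigma^{\alpha}(k_1) \Phi(f)(g)$ gives $(K_1, \sigma^{\alpha})$-equivariance, while $\Phi((g \cdot_{\alpha} f)) = g \cdot \Phi(f)$ is an immediate check using the twisted $G_S$-action on the left. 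Compact support modulo $K_0$ transports to compact support modulo $K_1$ because $\alpha$-conjugation is a homeomorphism of $G_S$.

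The second step is to identify the Hecke operator $\tau_\sigma$ with $\tau_{\sigma^{\alpha}}^{1}$ under $\Phi$. Since $\tau_\sigma$ is $G_S$-equivariant, it is so for both the original and twisted actions, so the operator $\Phi \circ \tau_{\sigma} \circ \Phi^{-1}$ is a well-defined element of the spherical Hecke algebra $\mathcal{H}(G_S, K_1, \sigma^{\alpha})$. By the very definition of $\tau_{\sigma^{\alpha}}^{1}$ as the $K_1$-analogue of $\tau_\sigma$ (as recalled in the opening paragraph of Section \ref{supsing case}), this analogy is realized concretely by the conjugation-by-$\alpha$ dictionary; alternatively, one invokes Proposition 3.23 of \cite{Abdellatif}, which is precisely the compatibility of the two spherical Hecke algebras under the $\alpha$-twist.

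Combining these two steps gives an isomorphism
$$
\left(\frac{\mathrm{ind}_{K_0}^{G_S}(\sigma)}{\tau_\sigma\big(\mathrm{ind}_{K_0}^{G_S}(\sigma)\big)}\right)^{\alpha} \;\cong\; \frac{\mathrm{ind}_{K_1}^{G_S}(\sigma^{\alpha})}{\tau_{\sigma^{\alpha}}^{1}\big(\mathrm{ind}_{K_1}^{G_S}(\sigma^{\alpha})\big)},
$$
so that $\pi$ is a $G_S$-quotient of the left-hand side iff $\pi^{\alpha}$ is a $G_S$-quotient of the right-hand side. As $\sigma$ ranges over weights of $K_0$, the $\sigma^{\alpha}$ range over all weights of $K_1$, giving the forward implication. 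The converse follows by applying the same argument with $\alpha^{-1}$, using $(\pi^{\alpha})^{\alpha^{-1}} \cong \pi$. The only real content is the Hecke-operator identification in step two, which is handled cleanly by Proposition 3.23 of \cite{Abdellatif}.
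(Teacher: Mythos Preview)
Your proposal is correct and follows essentially the same approach as the paper, which simply records the result as an immediate consequence of Proposition 3.23 of \cite{Abdellatif} together with the definitions. You have just unpacked that citation by exhibiting the $\alpha$-twist isomorphism $(\mathrm{ind}_{K_0}^{G_S}(\sigma))^{\alpha}\cong \mathrm{ind}_{K_1}^{G_S}(\sigma^{\alpha})$ and the matching of Hecke operators, which is precisely the content of that cited proposition.
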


\subsection{Some key Lemmas}

\begin{lm}\label{non-zero intertwiner from c-ind killed by a polynomial of Hecke algebra}
    Let $\pi$ be a $K_0$-supersingular representation of $G_S$, and $\sigma$ a weight of $K_0$. Suppose $\varphi$ be a non-zero $G_S-$intertwiner from $\mathrm{ind}_{K_0}^{G_S}(\sigma)$ to $\pi$. Then, for large enough $k\geq 1$, we have $\varphi\circ \tau_{\sigma}^{k}=0$.
\end{lm}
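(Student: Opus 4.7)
The plan is to combine two results established earlier in the paper: the finite-dimensionality of the spherical Hecke module generated by $\varphi$ (Proposition \ref{finite dimensionality result}), and the characterization of $K_0$-supersingular quotients in terms of the eigenvalue $\lambda=0$ (Proposition \ref{parameterization form of supersingulars}). The first result upgrades the mere existence of some polynomial relation on $\varphi$ for the Hecke operator $\tau_{\sigma}$ to a minimal annihilating polynomial, and the second rules out nonzero eigenvalues.

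More precisely, I would begin by invoking Proposition \ref{finite dimensionality result}: since $\pi$ is a smooth irreducible representation and $\varphi\in \mathrm{Hom}_{G_S}(\mathrm{ind}_{K_0}^{G_S}(\sigma),\pi)$ is nonzero, the $\mathcal{H}(G_S,K_0,\sigma)$-submodule generated by $\varphi$ under the right action $\psi\mapsto \psi\circ\tau_{\sigma}$ is finite dimensional. Because $\mathcal{H}(G_S,K_0,\sigma)=\bar{\mathbb{F}}_p[\tau_{\sigma}]$ is a polynomial algebra in one variable, this forces the existence of a nonzero monic polynomial $P(X)\in \bar{\mathbb{F}}_p[X]$ of minimal degree annihilating $\varphi$, i.e. $\varphi\circ P(\tau_{\sigma})=0$. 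Factor $P(X)=\prod_{i=1}^r (X-\lambda_i)^{n_i}$ over the algebraically closed field $\bar{\mathbb{F}}_p$.

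Next, I would argue that every root $\lambda_i$ must be $0$. Suppose for contradiction that some $\lambda_j\neq 0$, and write $P(X)=(X-\lambda_j)R(X)$. By minimality of the degree of $P$, the intertwiner $\psi:=\varphi\circ R(\tau_{\sigma})$ is nonzero. Because $\pi$ is irreducible, $\psi$ is surjective; and since the operators $\tau_{\sigma}-\lambda_j\cdot \mathrm{Id}$ and $R(\tau_{\sigma})$ commute and their product is $P(\tau_{\sigma})$, we have $\psi\circ (\tau_{\sigma}-\lambda_j\cdot \mathrm{Id})=\varphi\circ P(\tau_{\sigma})=0$. Hence $\psi$ descends to a surjection
\[
\bar{\psi}:\frac{\mathrm{ind}_{K_0}^{G_S}(\sigma)}{(\tau_{\sigma}-\lambda_j\cdot \mathrm{Id})(\mathrm{ind}_{K_0}^{G_S}(\sigma))}\twoheadrightarrow \pi,
\]
realising $\pi$ as a quotient of this module. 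But Proposition \ref{parameterization form of supersingulars} applied to the $K_0$-supersingular $\pi$ forces $\lambda_j=0$, a contradiction.

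Therefore $P(X)=X^k$ for some $k\geq 1$, and we conclude $\varphi\circ \tau_{\sigma}^{k}=0$, as required. There is no serious obstacle here beyond assembling the two inputs; the only mildly delicate point is the minimality argument that converts "some nonzero polynomial kills $\varphi$" into "the annihilator is a pure power of $\tau_{\sigma}$", and this is handled cleanly by the standard peel-off-one-factor trick combined with the irreducibility of $\pi$ which promotes any nonzero intertwiner to a surjection.
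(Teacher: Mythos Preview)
Your proposal is correct and follows essentially the same approach as the paper's proof: both invoke Proposition \ref{finite dimensionality result} to obtain a minimal annihilating polynomial for $\varphi$ under the action of $\tau_{\sigma}$, then peel off a linear factor $(X-\lambda)$ to produce a nonzero intertwiner descending to the quotient by $(\tau_{\sigma}-\lambda\cdot\mathrm{Id})$, and finally apply Proposition \ref{parameterization form of supersingulars} to force $\lambda=0$, concluding that the minimal polynomial is a pure power of $X$.
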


\begin{proof}
    By Proposition \ref{finite dimensionality result}, the right $\mathcal{H}:=\mathcal{H}(G_S,K_0,\sigma)$-submodule $\varphi\circ \mathcal{H}$ generated by $\varphi$ is finite dimensional. So, if we take the image $\bar{\tau}_{\sigma}(:=n\mapsto n\circ \tau_{\sigma}:\varphi\circ \mathcal{H}\to \varphi\circ \mathcal{H}$) of $\tau_{\sigma}$ in $\mathrm{End}_{\bar{\mathbb{F}}_p}(\varphi\circ \mathcal{H})$, then for the minimal polynomial $m(X)$ of $\bar{\tau}_{\sigma}$, we have $\varphi\circ m(\tau_{\sigma})=0$.  

    Now, suppose $m(X)$ is the polynomial with minimal degree such that $\varphi\circ m(\tau_{\sigma})=0$. Let $\lambda$ be any root of $m(X)$ in $\bar{\mathbb{F}}_p$, and let $m(X)=(X-\lambda)m^{\prime}(X)$. Then $\varphi^{\prime}:=\varphi\circ m^{\prime}(\tau_{\sigma}):\mathrm{ind}_{K_0}^{G_S}(\sigma) \twoheadrightarrow \pi$, which induces a surjection $\frac{\mathrm{ind}_{K_0}^{G_S}(\sigma)}{(\tau_{\sigma}-\lambda \cdot \mathrm{id})(\mathrm{ind}_{K_0}^{G_S}(\sigma))}\twoheadrightarrow \pi$. By Proposition \ref{parameterization form of supersingulars}, we must have $\lambda=0$. Hence, $m(X)=X^k$ for some $k\geq 1$.
\end{proof}

\begin{rmk}
    The proof of the above lemma would have been simpler if $\pi$ was also \textbf{admissible} (meaning $\pi^K$ is finite dimensional for every compact open subgroup $K$ of $G_S$). In such case, it suffices to show that the vector space $\mathrm{Hom}_{G_S}(\mathrm{ind}_{K_0}^{G_S}(\sigma),\pi)\cong \mathrm{Hom}_{K_0}(\sigma,\pi)$ is finite dimensional. For this, one simply shows that $\mathrm{Hom}_{I_S(1)}(\sigma,\pi)$ is finite dimensional. This can be proved by induction on the dimensions of finite dimensional smooth representations of $I_S(1)$. To see this, at first we note that $\sigma$ has a unique line fixed by $I_S(1)$, so we have the short exact sequence $$0\to 1_{I_S(1)}\to \sigma\to \sigma/1_{I_S(1)}\to 0$$ of $I_S(1)$-representations. Applying, the $\mathrm{Hom}_{I_S(1)}(-,\pi)$ functor we get $$0\to \mathrm{Hom}_{I_S(1)}(\sigma/1_{I_S(1)},\pi)\to \mathrm{Hom}_{I_S(1)}(\sigma,\pi)\to \mathrm{Hom}_{I_S(1)}(1_{I_S(1)},\pi)\cong \pi^{I_S(1)}.$$ Since $\pi^{I_S(1)}$ is finite dimensional by admissibility, and $\mathrm{Hom}_{I_S(1)}(\sigma/1_{I_S(1)},\pi)$ is finite dimensional by induction hypothesis, we have that $\mathrm{Hom}_{I_S(1)}(\sigma,\pi)$, and hence $\mathrm{Hom}_{K_0}(\sigma,\pi)$ is finite dimensional as required.
\end{rmk}

We now prove an immediate corollary of the above Lemma, which is analogous to Corollary 3.3 of \cite{PaskunasRestriction}. The proof is inspired by Lemme 2.9 of \cite{Hu}.

\begin{corl}\label{I_S(1) invariants are killed by large powers of S}
    Let $\pi$ be a $K_0$-supersingular representation of $G_S$, and $0\neq v\in \pi^{I_S(1)}$. Then, $\mathcal{S}^iv\in \pi^{I_S(1)}$ for every $i\geq 1$, and for some large enough $k\geq 1$, we have $\mathcal{S}^kv=0$.
\end{corl}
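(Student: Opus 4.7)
My plan is as follows. The first assertion---that $\mathcal{S}^i v \in \pi^{I_S(1)}$ for every $i \geq 1$---is a straightforward induction on $i$, using Lemma \ref{S_1 and S_2 preserve I_S(1) invarinats} at each step to pass from $\mathcal{S}^{i-1}v$ to $\mathcal{S}^i v = \mathcal{S}(\mathcal{S}^{i-1}v)$.

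To prove the vanishing assertion, I would first reduce to the case that $I_S$ acts on $v$ by a single character. Since $I_S(1)$ is normal in $I_S$ and $I_S/I_S(1) \cong k_F^{\times}$ has order coprime to $p$, the subrepresentation $\bar{\mathbb{F}}_p[I_S]\cdot v \subset \pi^{I_S(1)}$ is finite-dimensional and decomposes into a direct sum of $I_S$-characters. Writing $v$ as a sum of its character components and using the $\bar{\mathbb{F}}_p$-linearity of $\mathcal{S}$, it suffices to find, for each non-zero eigenvector component, an integer killing it under iterated $\mathcal{S}$, and then take the maximum.

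So assume $I_S$ acts on $v$ by a character. If $\mathcal{S}v = 0$ we are done, so by Lemma \ref{generation of non-trivial weight} we may assume $\sigma := \bar{\mathbb{F}}_p[K_0]\cdot \mathcal{S}v$ is a non-trivial weight of $K_0$; in particular $\sigma \neq 1$, hence $\tau_\sigma = \tau$. By Carter--Lusztig theory (Theorem \ref{Carter-Lusztig thory}) the $I_S(1)$-fixed line in $\sigma$ is unique, so $\mathcal{S}v$ is a non-zero scalar multiple of $v_\sigma$. Frobenius reciprocity then produces a non-zero $G_S$-intertwiner $\varphi : \mathrm{ind}_{K_0}^{G_S}(\sigma) \to \pi$ with $\varphi([1, \mathcal{S}v]) = \mathcal{S}v$. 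By linearity in the second slot, equation (\ref{action of tau for sigma non-trivial}) gives
\[
\tau_\sigma([1, \mathcal{S}v]) = \sum_{\lambda \in k_F^2}\begin{pmatrix}1 & A(\lambda)\\ 0 & 1\end{pmatrix}\alpha_0^{-1} \cdot [1, \mathcal{S}v],
\]
and an easy induction on $k$, repeatedly exploiting the $G_S$-linearity of $\varphi$ and $\tau_\sigma$, yields the key identity
\[
\varphi\bigl(\tau_\sigma^k([1, \mathcal{S}v])\bigr) = \mathcal{S}^{k+1} v \quad \text{for all } k \geq 0.
\]
Finally, Lemma \ref{non-zero intertwiner from c-ind killed by a polynomial of Hecke algebra} applied to the non-zero intertwiner $\varphi$ furnishes some $k \geq 1$ with $\varphi \circ \tau_\sigma^k = 0$; evaluating the key identity at this $k$ gives $\mathcal{S}^{k+1} v = 0$.

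The main technical content is the key identity, which encodes the compatibility between the spherical Hecke operator $\tau_\sigma$ on $\mathrm{ind}_{K_0}^{G_S}(\sigma)$ and the operator $\mathcal{S}$ on $\pi$. Once this translation is in place, the supersingularity of $\pi$---captured through the finite-dimensionality result of Lemma \ref{non-zero intertwiner from c-ind killed by a polynomial of Hecke algebra}---immediately forces the eventual vanishing of $\mathcal{S}^k v$.
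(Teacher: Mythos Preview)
Your proof is correct and follows essentially the same approach as the paper's own argument: reduce to the case where $I_S$ acts on $v$ by a character via the decomposition of $\bar{\mathbb{F}}_p[I_S]\cdot v$ into characters, invoke Lemma~\ref{generation of non-trivial weight} to obtain a non-trivial weight $\sigma$ generated by $\mathcal{S}v$, apply Frobenius reciprocity to produce $\varphi$, and then use Lemma~\ref{non-zero intertwiner from c-ind killed by a polynomial of Hecke algebra} together with the identification of $\tau_\sigma$ and $\mathcal{S}$ (equation~(\ref{action of tau for sigma non-trivial})) to conclude. The only differences are cosmetic: you spell out the inductive verification of the key identity $\varphi(\tau_\sigma^k([1,\mathcal{S}v])) = \mathcal{S}^{k+1}v$ and explicitly invoke the uniqueness of the $I_S(1)$-fixed line in $\sigma$, whereas the paper compresses these into a single line.
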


\begin{proof}
    By Lemma \ref{S_1 and S_2 preserve I_S(1) invarinats} it is clear that $\mathcal{S}^iv\in \pi^{I_S(1)}$ for every $i\geq 1$.
    
    At first, we assume that $I_S$ acts on $v$ by a character $\chi.$ If $\mathcal{S}v=0$ we are done. So assume $\mathcal{S}v\neq 0$. Then, $\sigma:=\bar{\mathbb{F}}_p[K_0]\cdot (\mathcal{S}v)$ is a non-trivial weight of $K_0$ in $\pi$ by Lemma \ref{generation of non-trivial weight}. Then by Frobenius reciprocity, we get a surjection $\varphi:\mathrm{ind}_{K_0}^{G_S}(\sigma)\twoheadrightarrow \pi$ that sends $[1,\mathcal{S}v]\mapsto \mathcal{S}v$. As $\sigma$ is non-trivial the action given by $\tau_{\sigma}$ and $\mathcal{S}$ is same (see equation (\ref{action of tau for sigma non-trivial})), so by Lemma \ref{non-zero intertwiner from c-ind killed by a polynomial of Hecke algebra}, we have $0=\varphi\circ \tau_{\sigma}^{k}([1,\mathcal{S}v])=\mathcal{S}^{k+1}v$ for some $k\geq 1$.

    Now, for the general case note that $I_S/I_{S}(1)$ is a finite Abelian group of order coprime to $p$. So by Maschke's theorem, for any $0\neq v\in \pi^{I_S(1)}$, the $I_S$-subrepresentation $\bar{\mathbb{F}}_p[I_S]\cdot v\subset \pi^{I_S(1)}$ is finite dimensional, and hence a sum of characters. So we write $v=\sum_{i} v_i$ such that $I_S$ acts on each $v_i$ by some character. Then, by the argument in the previous paragraph, for each $v_i$ we have a $k_i$ such that $\mathcal{S}^{k_i}v_i=0$. We choose $k$ to be the maximum of all the $k_i$'s. This completes the proof.
\end{proof}
The next Lemma is the analogue of Lemma 3.4 in \cite{PaskunasRestriction}, and the proof is essentially identical.

\begin{lm}\label{for Sv=0 action on v is controlled by B_S}
    Let $\pi$ be a smooth representation of $G_S$ and let $v\in \pi^{I_S(1)}$ such that $\mathcal{S}v=0$. Then we have $$w_0v=-\sum_{\lambda\in k_F^2\setminus \{0,0\}}\begin{pmatrix}\omega_F^{2}A(\lambda)^{-1} & -1\\ 0 & \omega_F^{-2}A(\lambda)\end{pmatrix}v\in \bar{\mathbb{F}}_p[B_S]\cdot v.$$
\end{lm}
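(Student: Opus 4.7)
The plan is to rewrite the given vanishing $\mathcal{S}v = 0$ so that the $\lambda = 0$ term becomes $w_0 v$, and then to use a Bruhat-type factorisation together with the $I_S(1)$-invariance of $v$ to replace each remaining term by a Borel translate of $v$.

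Concretely, starting from
\[\mathcal{S}v = \sum_{\lambda \in k_F^2} \begin{pmatrix}1 & A(\lambda)\\ 0 & 1\end{pmatrix}\alpha_0^{-1}v = 0,\]
I apply $w_0\alpha_0$ from the left. Since conjugation by $\alpha_0$ sends $\begin{pmatrix}1 & x\\ 0 & 1\end{pmatrix}$ to $\begin{pmatrix}1 & x\omega_F^{-2}\\ 0 & 1\end{pmatrix}$, the relation becomes $\sum_{\lambda \in k_F^2} w_0 \begin{pmatrix}1 & A(\lambda)\omega_F^{-2}\\ 0 & 1\end{pmatrix}v = 0$. The $\lambda = 0$ summand is exactly $w_0 v$, so
\[w_0 v = -\sum_{\lambda \neq 0} w_0 \begin{pmatrix}1 & A(\lambda)\omega_F^{-2}\\ 0 & 1\end{pmatrix}v.\]

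For each $\lambda \neq 0$, writing $t = A(\lambda)\omega_F^{-2}$, I will use the direct matrix identity
\[w_0 \begin{pmatrix}1 & t\\ 0 & 1\end{pmatrix} = \begin{pmatrix}t^{-1} & -1\\ 0 & t\end{pmatrix}\begin{pmatrix}1 & 0\\ t^{-1} & 1\end{pmatrix}.\]
Since $A(\lambda) = [\lambda_0] + [\lambda_1]\omega_F \neq 0$ has valuation $0$ or $1$, one has $t^{-1} = A(\lambda)^{-1}\omega_F^2 \in \mathfrak{p}_F$; hence the right factor lies in $\bar{U}_S(\mathfrak{p}_F) \subset I_S(1)$ and fixes $v$, while the left factor is precisely $\begin{pmatrix}\omega_F^2 A(\lambda)^{-1} & -1\\ 0 & \omega_F^{-2} A(\lambda)\end{pmatrix}\in B_S$ appearing in the statement. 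Substituting gives the claimed identity.

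There is no serious obstacle; the argument is essentially a rearrangement of $\mathcal{S}v = 0$. The two things to notice are the correct choice of left-multiplier $w_0\alpha_0$ (which converts the $\lambda = 0$ term into $w_0 v$) and the valuation bound on $A(\lambda)^{-1}\omega_F^2$, which is what allows the residual lower unipotent factor to be absorbed by the $I_S(1)$-invariance of $v$.
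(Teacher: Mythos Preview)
Your proof is correct and follows essentially the same argument as the paper's: both multiply the relation $\mathcal{S}v=0$ by $\alpha_0$ to isolate $v$ (equivalently, by $w_0\alpha_0$ to isolate $w_0v$), then apply the Bruhat factorisation $w_0\begin{pmatrix}1 & t\\ 0 & 1\end{pmatrix}=\begin{pmatrix}t^{-1} & -1\\ 0 & t\end{pmatrix}\begin{pmatrix}1 & 0\\ t^{-1} & 1\end{pmatrix}$ and absorb the lower unipotent factor using $t^{-1}=A(\lambda)^{-1}\omega_F^2\in\mathfrak{p}_F$ together with $v\in\pi^{I_S(1)}$. The only cosmetic difference is that the paper first applies $\alpha_0$ and then $w_0$ in two steps, whereas you apply $w_0\alpha_0$ in one.
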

\begin{proof}
    We have $$v=-\sum_{\lambda\in k_F^2\setminus \{0,0\}}\alpha_0\begin{pmatrix}1 & A(\lambda)\\ 0 & 1\end{pmatrix}\alpha_0^{-1}v=-\sum_{\lambda\in k_F^2\setminus \{0,0\}}\begin{pmatrix}1 & A(\lambda)\omega_F^{-2}\\ 0 & 1\end{pmatrix}v.$$ Then, it follows that 
    \begin{equation*}
        \begin{split}
            w_0v&=-\sum_{\lambda\in k_F^2\setminus \{0,0\}}\begin{pmatrix}0 & -1\\ 1 & 0\end{pmatrix}\begin{pmatrix}1 & A(\lambda)\omega_F^{-2}\\ 0 & 1\end{pmatrix}v\\
            &=-\sum_{\lambda\in k_F^2\setminus \{0,0\}}\begin{pmatrix}\omega_F^{2}A(\lambda)^{-1} & -1\\ 0 & \omega_F^{-2}A(\lambda)\end{pmatrix}\begin{pmatrix}1 & 0\\ \omega_F^{2}A(\lambda)^{-1} & 1\end{pmatrix}v\\
            &=-\sum_{\lambda\in k_F^2\setminus \{0,0\}}\begin{pmatrix}\omega_F^{2}A(\lambda)^{-1} & -1\\ 0 & \omega_F^{-2}A(\lambda)\end{pmatrix}v\in \bar{\mathbb{F}}_p[B_S]\cdot v.
        \end{split}
    \end{equation*}
    The last equality follows from the fact that for $A(\lambda)\neq 0$ we have $\omega_F^2A(\lambda)^{-1}\in \mathfrak{p}_F$, as the valuation of $A(\lambda)$ is either $0$ or $1$.    
\end{proof}

We now prove a sufficient condition under which a smooth irreducible representation of $G_S$ when restricted to $B_S$ remains irreducible.

\begin{prop}\label{B_S module generated by a non-zero vector has a I_S(1) invariant}
    Let $\pi$ be a smooth irreducible representation of $G_S$, and let $0\neq w\in \pi$. Then, we have : $\pi^{I_S(1)}\cap (\bar{\mathbb{F}}_p[B_S]\cdot w)\neq 0$. 
\end{prop}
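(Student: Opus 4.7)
The plan is to combine the Iwahori decomposition $I_S(1)=\bar{U}_S(\mathfrak{p}_F)\cdot T_S(1+\mathfrak{p}_F)\cdot U_S(\mathcal{O}_F)$ with a shift-and-average argument driven by the fact that $\alpha_0\in B_S$. By smoothness, there is some $m\geq 1$ with $w\in\pi^{K_0(m)}$. For $n$ sufficiently large, the vector $\alpha_0^{-n}w\in\bar{\mathbb{F}}_p[B_S]\cdot w$ is stabilized by the subgroup $H_n:=\bar{U}_S(\mathfrak{p}_F)\cdot T_S(1+\mathfrak{p}_F^m)\cdot U_S(\mathfrak{p}_F^{m+2n})$ of $I_S(1)$, because the conjugation formula $\alpha_0^{-n}K_0(m)\alpha_0^n=\bar{U}_S(\mathfrak{p}_F^{m-2n})\cdot T_S(1+\mathfrak{p}_F^m)\cdot U_S(\mathfrak{p}_F^{m+2n})$ enlarges the lower-unipotent part once $n\geq(m-1)/2$, while the upper-unipotent part becomes very deep. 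In particular $\alpha_0^{-n}w$ is fixed by $\bar{U}_S(\mathfrak{p}_F)$.

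The next task is to upgrade $H_n$-invariance to full $I_S(1)$-invariance while staying inside $\bar{\mathbb{F}}_p[B_S]\cdot w$. The natural candidate is
\[
v:=\sum_{(t,u)}tu\cdot\alpha_0^{-n}w,
\]
where $(t,u)$ ranges over coset representatives of $(T_S(1+\mathfrak{p}_F^m)\cdot U_S(\mathfrak{p}_F^{m+2n}))$ in $T_S(1+\mathfrak{p}_F)\cdot U_S(\mathcal{O}_F)=B_S\cap I_S(1)$. Clearly $v\in\bar{\mathbb{F}}_p[B_S]\cdot w$, and invariance of $v$ under $B_S\cap I_S(1)$ is a standard coset-reshuffle that uses only that $T_S$ normalizes $U_S(\mathcal{O}_F)$ and $U_S(\mathfrak{p}_F^{m+2n})$. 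To check $\bar{u}$-invariance for $\bar{u}\in\bar{U}_S(\mathfrak{p}_F)$, I would expand $\bar{u}\cdot tu=tu\cdot((tu)^{-1}\bar{u}(tu))$ and Iwahori-decompose $(tu)^{-1}\bar{u}(tu)=\bar{u}'''t'''u'''\in I_S(1)$; the factor $\bar{u}'''\in\bar{U}_S(\mathfrak{p}_F)$ is killed immediately because $\alpha_0^{-n}w$ is $\bar{U}_S(\mathfrak{p}_F)$-fixed, and the remaining $t'''u'''\in B_S\cap I_S(1)$ induces a change of variables on the coset representatives. The explicit formula $u'''=-x^2y/(1-xy)$ in the Iwahori decomposition of $u^{-1}\bar u u$ shows that the substitution $x\mapsto x-x^2y/(1-xy)$ on $\mathcal{O}_F/\mathfrak{p}_F^{m+2n}$ has derivative in $1+\mathfrak{p}_F$, hence is a bijection; this is the lemma that makes the averaging sum $\bar{U}_S(\mathfrak{p}_F)$-invariant.

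The main obstacle I anticipate is showing $v\neq 0$: averaging over the finite $p$-group $I_S(1)/H_n$ in characteristic $p$ can collapse the sum. To handle this I would abandon the explicit average and instead invoke the pro-$p$ trick: the smooth $B_S\cap I_S(1)$-subrepresentation $\bar{\mathbb{F}}_p[B_S\cap I_S(1)]\cdot(\alpha_0^{-n}w)\subset\pi$ is non-zero, and the group $B_S\cap I_S(1)$ is pro-$p$, so it admits a non-zero invariant $v^*\in\bar{\mathbb{F}}_p[B_S]\cdot w$. The same Iwahori-decomposition computation then has to be re-run for $v^*$: the $t'''$- and $u'''$-components of $b^{-1}\bar{u}b$ lie in $B_S\cap I_S(1)$ and so fix $v^*$, reducing the question of $\bar{u}$-invariance to that of a smaller $\bar{u}'''\in\bar{U}_S(\mathfrak{p}_F)$; iterating or, equivalently, combining the bijection argument with the $B_S\cap I_S(1)$-invariance of $v^*$ closes the loop. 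The delicate point throughout is to control the interplay between the $\alpha_0$-shift (which controls the $\bar{U}_S$-direction) and the $B_S\cap I_S(1)$-averaging (which controls the $T_S\cdot U_S$-direction) without sacrificing one for the other.
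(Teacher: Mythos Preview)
Your overall strategy---shift by a power of $\alpha_0$ to make the vector $\bar{U}_S(\mathfrak{p}_F)$-fixed, then invoke the pro-$p$ property---is exactly the paper's approach, and your first paragraph is correct. The averaging detour is rightly abandoned. But in the final paragraph you apply the pro-$p$ trick to the \emph{wrong group}: you produce a $B_S\cap I_S(1)$-invariant $v^*$ and then try to verify $\bar{U}_S(\mathfrak{p}_F)$-invariance separately. That verification is where the gap lies. Your sketch ``the $t'''$- and $u'''$-components of $b^{-1}\bar{u}b$ fix $v^*$, reducing to a smaller $\bar{u}'''$, and iterating closes the loop'' does not work as stated: the lower-unipotent part of $b^{-1}\bar{u}b$ (with $b\in B_S\cap I_S(1)$, $\bar{u}\in\bar{U}_S(\mathfrak{p}_F)$) lies in $\bar{U}_S(\mathfrak{p}_F)$ but not in any deeper congruence subgroup, so there is no shrinking and the iteration does not terminate. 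Nor is there a reason a generic $B_S\cap I_S(1)$-invariant in your space should be $\bar{U}_S(\mathfrak{p}_F)$-invariant, since $B_S\cap I_S(1)$ is not normal in $I_S(1)$.

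The fix is a one-line observation you have essentially already made: the space $W:=\bar{\mathbb{F}}_p[B_S\cap I_S(1)]\cdot(\alpha_0^{-n}w)$ is \emph{itself $I_S(1)$-stable}. Indeed, for $i\in I_S(1)$ and $b\in B_S\cap I_S(1)$ write $ib=b'\bar{u}'$ via the Iwahori factorisation $I_S(1)=(B_S\cap I_S(1))\cdot\bar{U}_S(\mathfrak{p}_F)$; then $ib\cdot(\alpha_0^{-n}w)=b'\bar{u}'\cdot(\alpha_0^{-n}w)=b'\cdot(\alpha_0^{-n}w)\in W$, because $\alpha_0^{-n}w$ is $\bar{U}_S(\mathfrak{p}_F)$-fixed. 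Equivalently, $W=\bar{\mathbb{F}}_p[I_S(1)]\cdot(\alpha_0^{-n}w)$. Now apply the pro-$p$ trick with the full group $I_S(1)$: since $I_S(1)$ is pro-$p$ and $W$ is a non-zero smooth $I_S(1)$-module, $W^{I_S(1)}\neq 0$, and any non-zero element there lies in $W\subset\bar{\mathbb{F}}_p[B_S]\cdot w$. This is precisely the paper's argument, and it bypasses the separate $\bar{U}_S$-invariance check entirely.
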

\begin{proof}
    As $\pi$ is smooth, there exists $k\geq 0$ such that $w$ is fixed by $\bar{U}_S(\mathfrak{p}_F^{2k+1})$. Then $w^{\prime}:=\alpha_0^{-k}w$ is fixed by $\bar{U}_S(\mathfrak{p}_F)=\alpha_0^{-k}\bar{U}_S(\mathfrak{p}_F^{2k+1})\alpha_0^{k}$. Now, the Iwahori decomposition can be written as $I_S(1)=U(\mathcal{O}_F)\times T_S(1+\mathfrak{p}_F)\times \bar{U}_S(\mathfrak{p}_F)=(I_S(1)\cap B_S)\times \bar{U}_S(\mathfrak{p}_F)$. So, $\bar{\mathbb{F}}_p[I_S(1)]\cdot w^{\prime}=\bar{\mathbb{F}}_p[I_S(1)\cap B_S]\cdot w^{\prime}$ has a non-zero vector fixed by $I_S(1)$ (a pro-$p$ group). But obviously $\bar{\mathbb{F}}_p[I_S(1)\cap B_S]\cdot w^{\prime}\subset \bar{\mathbb{F}}_p[B_S]\cdot w$, and hence $\pi^{I_S(1)}\cap (\bar{\mathbb{F}}_p[B_S]\cdot w)\neq 0$.
\end{proof}

\begin{prop}\label{criterion for restriction to B_S to be irreducible}
     Let $\pi$ be a smooth irreducible representation of $G_S$. Suppose that for any $0\neq w\in \pi$ there exists a non-zero $v\in \pi^{I_S(1)}\cap (\bar{\mathbb{F}}_p[B_S]\cdot w)$ such that $\mathcal{S}v=0$. Then, $\pi|_{B_S}$ is an irreducible $B_S$-representation.
\end{prop}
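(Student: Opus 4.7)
Let $M$ be any non‑zero $B_S$-subrepresentation of $\pi$; the goal is to show $M=\pi$. Pick a non‑zero $w\in M$ and apply the hypothesis to produce a non‑zero $v\in \pi^{I_S(1)}\cap (\bar{\mathbb{F}}_p[B_S]\cdot w)$ with $\mathcal{S}v=0$. Since $M$ is $B_S$-stable and contains $w$, it contains $\bar{\mathbb{F}}_p[B_S]\cdot w$, hence $v\in M$. Because $\pi$ is $G_S$-irreducible and $v\neq 0$, it will suffice to show that the $B_S$-submodule $N:=\bar{\mathbb{F}}_p[B_S]\cdot v\subseteq M$ is in fact $G_S$-stable; then $N=\pi$, forcing $M=\pi$.

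By the Bruhat decomposition $G_S=B_S\sqcup B_Sw_0B_S$ and the $B_S$-stability of $N$, it is enough to check $w_0\cdot N\subseteq N$. Writing a general element of $B_S$ as $tu$ with $t\in T_S$ and $u\in U_S$, and using the relation $w_0t=t^{-1}w_0$, one reduces to proving $w_0u(y)v\in N$ for every $y\in F$, where $u(y)$ denotes the upper unipotent with entry $y$.

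I would then split into two cases. If $y\in\mathcal{O}_F$, then $u(y)\in U_S(\mathcal{O}_F)\subseteq I_S(1)$ fixes $v$, so $w_0u(y)v=w_0v$, and this lies in $N$ by Lemma \ref{for Sv=0 action on v is controlled by B_S}, which is applicable because $\mathcal{S}v=0$. If $y\notin\mathcal{O}_F$ (in particular $y\neq 0$), I would invoke the elementary matrix identity
\[
w_0\,u(y)=\begin{pmatrix}y^{-1} & -1\\ 0 & y\end{pmatrix}\begin{pmatrix}1 & 0\\ y^{-1} & 1\end{pmatrix}=b(y)\,\bar u(y^{-1}),
\]
with $b(y)\in B_S$. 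Since $y\notin\mathcal{O}_F$ forces $y^{-1}\in\mathfrak{p}_F$, the factor $\bar u(y^{-1})$ lies in $\bar U_S(\mathfrak{p}_F)\subseteq I_S(1)$, so $\bar u(y^{-1})v=v$, and hence $w_0u(y)v=b(y)v\in N$. Combining the two cases gives $w_0\cdot N\subseteq N$ as required.

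The key obstacle, of sorts, is that the naive strategy of applying the hypothesis to an arbitrary $m\in M$ only produces a specific $I_S(1)$-invariant linear combination of $B_S$-translates of $m$ whose image under $w_0$ lies in $M$; it does not directly yield $w_0m\in M$ and therefore does not close $M$ under $w_0$. The matrix identity above circumvents this by trading an upper unipotent with negative-valuation entry for a lower unipotent with entry in $\mathfrak{p}_F$, which the $I_S(1)$-invariant vector $v$ kills automatically; together with Lemma \ref{for Sv=0 action on v is controlled by B_S} for the $\mathcal{O}_F$-case, this suffices to $G_S$-close $N$ within $M$.
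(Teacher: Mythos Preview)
Your proof is correct and follows essentially the same strategy as the paper: produce $v\in\bar{\mathbb{F}}_p[B_S]\cdot w$ with $\mathcal{S}v=0$, invoke Lemma~\ref{for Sv=0 action on v is controlled by B_S} to get $w_0v\in\bar{\mathbb{F}}_p[B_S]\cdot v$, and conclude that $\bar{\mathbb{F}}_p[B_S]\cdot v$ is already $G_S$-stable, hence equals $\pi$. The only difference is cosmetic: the paper replaces your Bruhat-plus-case-analysis argument by the single decomposition $G_S=B_SI_S(1)\sqcup B_Sw_0I_S(1)$, which (since $v$ is $I_S(1)$-fixed) immediately gives $\bar{\mathbb{F}}_p[G_S]\cdot v=\bar{\mathbb{F}}_p[B_S]\cdot v+\bar{\mathbb{F}}_p[B_S]\cdot(w_0v)$---your matrix identity for $y\notin\mathcal{O}_F$ is exactly what underlies that Iwasawa-type decomposition.
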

\begin{proof}
    By Lemma \ref{for Sv=0 action on v is controlled by B_S} we have $w_0v\in \bar{\mathbb{F}}_p[B_S]\cdot v$. Since $G_S=B_SI_S(1)\sqcup B_Sw_0I_S(1)$ (see Lemme 1.7 in \cite{Abdellatif}), and $\pi$ is irreducible, we have $\pi=\bar{\mathbb{F}}_p[G_S]\cdot v=\bar{\mathbb{F}}_p[B_S]\cdot v\subset \bar{\mathbb{F}}_p[B_S]\cdot w$. In conclusion, we have shown that for every $0\neq w\in \pi$ we have $\pi=\bar{\mathbb{F}}_p[B_S]\cdot w$, hence $\pi|_{B_S}$ is an irreducible $B_S-$representation.
\end{proof}

\subsection{Proof of Theorem \ref{main thm 4}}

\begin{thm}\label{restriction of admissible supersingulars to borel is irreducible}
Let $K\in \{K_0,K_1\}$, and let $\pi$ be a $K$-supersingular representation of $G_S$. Then $\pi|_{B_S}$ is irreducible.
\end{thm}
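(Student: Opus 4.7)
The plan is to apply the criterion of Proposition \ref{criterion for restriction to B_S to be irreducible}, which tells us that $\pi|_{B_S}$ is irreducible provided that every nonzero $w \in \pi$ admits a nonzero $v \in \pi^{I_S(1)} \cap (\bar{\mathbb{F}}_p[B_S] \cdot w)$ with $\mathcal{S}v = 0$. So essentially all the hard work has already been packaged into earlier results, and this final theorem is a short assembly.

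First I would reduce to the case $K = K_0$. If $\pi$ is $K_1$-supersingular, then by Proposition \ref{pi is supsing wrt K_0 iff pi^alpha is supsing wrt K_1} the twist $\pi^{\alpha^{-1}}$ is $K_0$-supersingular. A direct computation shows that the element $\alpha = \mathrm{diag}(1,\omega_F)$ normalizes $B_S$, so conjugation by $\alpha$ is an automorphism of $B_S$, and therefore $\pi^{\alpha^{-1}}|_{B_S}$ is irreducible if and only if $\pi|_{B_S}$ is irreducible. Hence it suffices to handle $K_0$-supersingular representations.

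For the $K_0$ case, fix any nonzero $w \in \pi$. By Proposition \ref{B_S module generated by a non-zero vector has a I_S(1) invariant}, the subspace $\pi^{I_S(1)} \cap (\bar{\mathbb{F}}_p[B_S] \cdot w)$ contains a nonzero vector $v_0$. Apply Corollary \ref{I_S(1) invariants are killed by large powers of S} to $v_0$: there exists a minimal integer $k \geq 1$ with $\mathcal{S}^k v_0 = 0$, and set $v := \mathcal{S}^{k-1} v_0$. By minimality $v \neq 0$, and by construction $\mathcal{S} v = 0$. Two things must be checked about $v$: that it lies in $\pi^{I_S(1)}$, which follows from iterating Lemma \ref{S_1 and S_2 preserve I_S(1) invarinats}; and that it lies in $\bar{\mathbb{F}}_p[B_S] \cdot w$, which follows because the definition of $\mathcal{S}$ involves only the matrices $\begin{pmatrix}1 & A(\lambda) \\ 0 & 1\end{pmatrix}$ and $\alpha_0^{-1}$, all of which lie in $B_S$. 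Thus $v$ meets the hypotheses of Proposition \ref{criterion for restriction to B_S to be irreducible}, and we conclude.

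There is no real obstacle here: the structural finiteness packaged in Corollary \ref{I_S(1) invariants are killed by large powers of S} (which in turn depended on Proposition \ref{finite dimensionality result} and Proposition \ref{finite codimension}) is exactly what permits us to find a nonzero vector in the $B_S$-submodule on which $\mathcal{S}$ vanishes, and Lemma \ref{for Sv=0 action on v is controlled by B_S} (used inside the criterion) takes care of showing that $w_0 v$ is already captured by $B_S$, which is the only nontrivial piece of $G_S$ needed to generate $\pi$ from $v$ via the decomposition $G_S = B_S I_S(1) \sqcup B_S w_0 I_S(1)$.
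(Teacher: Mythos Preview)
Your proposal is correct and follows essentially the same approach as the paper's proof: reduce to $K_0$ via Proposition \ref{pi is supsing wrt K_0 iff pi^alpha is supsing wrt K_1}, use Proposition \ref{B_S module generated by a non-zero vector has a I_S(1) invariant} to find an $I_S(1)$-invariant vector in the $B_S$-submodule, apply Corollary \ref{I_S(1) invariants are killed by large powers of S} to locate a nonzero $\mathcal{S}$-killed $I_S(1)$-invariant still inside that $B_S$-submodule, and conclude via Proposition \ref{criterion for restriction to B_S to be irreducible}. Your explicit remark that $\alpha$ normalizes $B_S$ is a helpful clarification the paper leaves implicit.
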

\begin{proof}
    We only prove this for $K=K_0$, owing to Proposition \ref{pi is supsing wrt K_0 iff pi^alpha is supsing wrt K_1}. Let $0\neq w\in \pi$. By, Proposition \ref{B_S module generated by a non-zero vector has a I_S(1) invariant} we can pick a non-zero vector $v\in\bar{\mathbb{F}}_p[B_S]\cdot w$ which is $I_S(1)$-invariant. Now, let $k$ be the least positive integer such that $\mathcal{S}^kv=0$; we know such a $k$ exists by Corollary \ref{I_S(1) invariants are killed by large powers of S}. But by Lemma \ref{S_1 and S_2 preserve I_S(1) invarinats}, we have $0\neq v^{\prime}:=\mathcal{S}^{k-1}v\in \pi^{I_S(1)}\cap\bar{\mathbb{F}}_p[B_S]\cdot w$; here note that the transformation $\mathcal{S}$ is defined by an element of $\bar{\mathbb{F}}_p[B_S]$. Therefore, we can apply Proposition \ref{criterion for restriction to B_S to be irreducible}. This completes the proof.
\end{proof}

We finally prove one of the main theorem of this paper. This proof is inspired by that of  Theorem 5.10 of \cite{xu2024restrictionpmodularrepresentationsu2}.

\begin{thm}\label{G_S maps are B_S maps from admissible supersingulars}
    Let $K\in \{K_0,K_1\}$, and let $\pi$ be a $K$-supersingular representation of $G_S$. Suppose $\pi^{\prime}$ is a smooth representation of $G_S$. Then, $\mathrm{Hom}_{G_S}(\pi,\pi^{\prime})\cong \mathrm{Hom}_{B_S}(\pi,\pi^{\prime})$.
\end{thm}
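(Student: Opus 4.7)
Injectivity of the restriction map is immediate: a $G_S$-intertwiner that vanishes as a $B_S$-map is the zero map. For surjectivity, I take $\psi \in \mathrm{Hom}_{B_S}(\pi,\pi')$ and aim to upgrade its $B_S$-linearity to $G_S$-linearity. Since $G_S = \langle B_S, w_0\rangle$, it suffices to show $\psi(w_0 v) = w_0 \psi(v)$ for every $v \in \pi$. Introduce the subspace
\[
W := \{v \in \pi : \psi(w_0 v) = w_0 \psi(v)\} \subset \pi,
\]
and adopt a two-step strategy: (i) show $W$ is $G_S$-stable, and (ii) exhibit a nonzero vector in $W$. Combined with the $G_S$-irreducibility of $\pi$, these will force $W = \pi$.

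Step (i) rests on three elementary Weyl-type computations, corresponding to a generating triple $T_S$, $\bar{U}_S$, $w_0$ of $G_S$. For $t \in T_S$, the relation $w_0 T_S w_0^{-1} = T_S$ gives $w_0 t = t' w_0$ with $t' := w_0 t w_0^{-1} \in T_S \subset B_S$, and $B_S$-linearity of $\psi$ yields $\psi(w_0 t v) = t'\psi(w_0 v) = t' w_0 \psi(v) = w_0 t \psi(v)$ for $v \in W$; hence $tv \in W$. For $\bar u \in \bar{U}_S$, the Weyl relation $w_0 \bar{U}_S w_0^{-1} = U_S \subset B_S$ gives $w_0 \bar u = u w_0$ with $u \in B_S$, and the same argument shows $\bar u v \in W$. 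Finally, $w_0^2 = -I_2 \in T_S \subset B_S$ yields $\psi(w_0 \cdot w_0 v) = \psi(-I_2 \cdot v) = -I_2 \cdot \psi(v) = w_0 \cdot w_0 \psi(v)$, so $w_0 v \in W$. Since $T_S \cup \bar{U}_S \cup \{w_0\}$ generates $G_S$, this confirms $G_S$-stability of $W$.

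For (ii), the starting point is that $\pi^{I_S(1)} \neq 0$ as $I_S(1)$ is pro-$p$. Picking $0 \neq v_0 \in \pi^{I_S(1)}$, Corollary \ref{I_S(1) invariants are killed by large powers of S} yields a minimal $k \geq 1$ with $\mathcal{S}^k v_0 = 0$, and the vector $v := \mathcal{S}^{k-1} v_0$ is then nonzero, $I_S(1)$-fixed (by Lemma \ref{S_1 and S_2 preserve I_S(1) invarinats}), and satisfies $\mathcal{S} v = 0$. Lemma \ref{for Sv=0 action on v is controlled by B_S} applied to $v$ in $\pi$ produces a universal identity $w_0 v = -\sum_{\lambda \neq 0} b_\lambda v$ with $b_\lambda \in B_S$; applying $\psi$ and using $B_S$-linearity yields $\psi(w_0 v) = -\sum_\lambda b_\lambda \psi(v)$. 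To conclude $v \in W$, the plan is to apply the same Lemma to $\psi(v)$ inside $\pi'$: the hypothesis $\mathcal{S}\psi(v) = 0$ is automatic since $\mathcal{S}$ is a sum of $B_S$-translations and $\psi$ is $B_S$-linear, so $\mathcal{S}\psi(v) = \psi(\mathcal{S} v) = 0$; the remaining hypothesis, $\bar{U}_S(\mathfrak{p}_F)$-invariance of $\psi(v)$, is the main technical obstacle. I expect to establish it by an iterative smoothing argument modelled on the proof of Theorem \ref{lifting B_S intertwiners from V_eta to G_S intertwiners from Ind_BS^GS}: $\psi(v)$ is $\bar{U}_S(\mathfrak{p}_F^{2k+1})$-fixed for some $k$ by smoothness of $\pi'$, and the relation $\psi(v) = -\sum_\lambda u_\lambda \psi(v)$ coming from $\mathcal{S}\psi(v) = 0$, together with suitable Bruhat-style matrix decompositions, should let one step the invariance down to $\bar{U}_S(\mathfrak{p}_F)$.

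Combining (i) and (ii), the nonzero $G_S$-stable subspace $W$ equals $\pi$ by irreducibility, so $\psi$ is $G_S$-linear, completing the proof for $K = K_0$. The case $K = K_1$ reduces to $K = K_0$ via the $\alpha = \mathrm{diag}(1,\omega_F) \in \mathrm{GL}_2(F)$ twist: if $\pi$ is $K_1$-supersingular then $\pi^{\alpha^{-1}}$ is $K_0$-supersingular by Proposition \ref{pi is supsing wrt K_0 iff pi^alpha is supsing wrt K_1}, and since $\alpha$ normalizes both $B_S$ and $G_S$, twisting induces compatible bijections on both $\mathrm{Hom}_{B_S}$ and $\mathrm{Hom}_{G_S}$, reducing the $K_1$ statement to the $K_0$ statement applied to the pair $(\pi^{\alpha^{-1}}, (\pi')^{\alpha^{-1}})$.
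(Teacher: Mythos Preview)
Your framework in step~(i) is correct and is actually a cleaner packaging of the endgame than the paper's: the $G_S$-stability of $W$ follows exactly as you say, and once $W\neq 0$, irreducibility of $\pi$ finishes things. The paper instead argues directly that $\varphi(gv')=g\varphi(v')$ for all $g\in G_S$ and a single generator $v'$, using $G_S=B_SI_S(1)\sqcup B_Sw_0I_S(1)$; your subspace $W$ makes the logical structure more transparent.

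The gap is in step~(ii), and it is a real one. Your vector $v=\mathcal{S}^{k-1}v_0$ satisfies $\mathcal{S}v=0$, $v\in\pi^{I_S(1)}$, and $\mathcal{S}\psi(v)=0$, but you need $\psi(v)\in(\pi')^{I_S(1)}$ to invoke Lemma~\ref{for Sv=0 action on v is controlled by B_S} on the $\pi'$ side. Your proposed smoothing argument does not go through: the relation $\mathcal{S}\psi(v)=0$ unpacks to
\[
\psi(v)=-\sum_{\lambda\neq(0,0)}\begin{pmatrix}1 & A(\lambda)\omega_F^{-2}\\ 0 & 1\end{pmatrix}\psi(v),
\]
which expresses $\psi(v)$ purely in terms of $U_S$-translates of itself, with no $\alpha_0^{-1}$ left over. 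Conjugating $\bar u(b)$ past $u(A(\lambda)\omega_F^{-2})$ via a Bruhat decomposition produces a $\bar u(b')$ factor with $v(b')=v(b)$, so the $\bar U_S$-invariance does not improve. This is precisely the opposite of the situation in Theorem~\ref{lifting B_S intertwiners from V_eta to G_S intertwiners from Ind_BS^GS}, where $\mathcal{S}$ acts as a \emph{nonzero} scalar $\eta(\alpha_0)$, so one can write $\psi(\ell_{2,\eta})=\eta(\alpha_0)^{-1}\mathcal{S}\psi(\ell_{2,\eta})$ and gain two levels of $\bar U_S$-invariance from the $\alpha_0^{-1}$ inside $\mathcal{S}$.

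The paper resolves this by reversing the order of your two goals: it first iteratively replaces $v$ by a new $I_S(1)$-fixed, $I_S$-eigenvector $v_1$ chosen so that $\varphi(v_1)$ has strictly better $\bar U_S$-invariance than $\varphi(v)$, and only \emph{after} reaching a $v_k$ with $\varphi(v_k)\in(\pi')^{I_S(1)}$ does it apply powers of $\mathcal{S}$ to force $\mathcal{S}v'=0$. The replacement $v\mapsto v_1$ is where the work lies: if $\mathcal{S}v\neq 0$ one takes $v_1=\mathcal{S}v$ (and then the Theorem~\ref{lifting B_S intertwiners from V_eta to G_S intertwiners from Ind_BS^GS} computation applies), but if $\mathcal{S}v=0$ one must fall back on $\mathcal{S}_2v$, and if $\mathcal{S}_2v=0$ too one needs a further case split. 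This case analysis is unavoidable with the tools at hand, and is exactly what your sketch is missing.
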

\begin{proof}
    As before, we prove this for $K=K_0$ because of Proposition \ref{pi is supsing wrt K_0 iff pi^alpha is supsing wrt K_1}. The restriction map is obviously injective. We take a non-zero $\varphi\in \mathrm{Hom}_{B_S}(\pi,\pi^{\prime})$. Now, take a non-zero element $v\in \pi^{I_S(1)}$. We know, by the argument used in the proof of Corollary \ref{I_S(1) invariants are killed by large powers of S}, that the $I_S$-representation $\bar{\mathbb{F}}_p[I_S]\cdot v\subset \pi^{I_S(1)}$ is a finite sum of characters. So, by replacing $v$ by some non-zero vector in $\bar{\mathbb{F}}_p[I_S]\cdot v$, on which $I_S$ acts by a character, we may assume further that $I_S$ acts on $v\in \pi^{I_S(1)}$ by a character $\chi$. It follows that $\varphi(v)\neq 0$, as $\varphi$ is injective by Theorem \ref{restriction of admissible supersingulars to borel is irreducible}. So, by smoothness of $\pi^{\prime}$ we have that $\varphi(v)$ is fixed by $\bar{U}_S(\mathfrak{p}_F^{1+2k})$ for some $k\geq 1$. We will show that there exists a non-zero vector $v_1\in \pi^{I_S(1)}$ on which $I_S$ acts by some character and $\varphi(v_1)$($\neq 0$) is fixed by $\bar{U}_S(\mathfrak{p}_F^{2k-1})$.

    At first, we consider the case when $\mathcal{S}v\neq 0$. Then, we take $v_1:=\mathcal{S}v\in \pi^{I_S(1)}$, by Lemma \ref{S_1 and S_2 preserve I_S(1) invarinats}. Then, $I_S$ acts on $v_1$ by the same character $\chi$, by the same argument in the proof of Lemma \ref{generation of non-trivial weight}. Then, using the same computations as in the proof of Theorem \ref{lifting B_S intertwiners from V_eta to G_S intertwiners from Ind_BS^GS}, we have that $\varphi(v_1)=\mathcal{S}\varphi(v)$ is fixed by $\bar{U}_S(\mathfrak{p}_F^{2k-1})$, since $\varphi(v)$ is fixed by $B_S\cap I_S(1)$.

    Next, we consider the case when $\mathcal{S}v=0$ and $\mathcal{S}_2v\neq 0$. In this case we take $v_1:=\mathcal{S}_2v$, which lies in $\pi^{I_S(1)}$ by Lemma \ref{S_1 and S_2 preserve I_S(1) invarinats}. Then, by our assumption $\mathcal{S}_1w=0$. This means $$\sum_{\lambda\in k_F}\begin{pmatrix}1 & [\lambda]\\ 0 & 1\end{pmatrix}w_0^{-1}\mathcal{S}_2v=0,$$ or equivalently, 
    \begin{equation*}
        \begin{split}
            &v_1=\mathcal{S}_2v=-w_0\sum_{\lambda\in k_F^{\times}}\begin{pmatrix}1 & [\lambda]\\ 0 & 1\end{pmatrix}w_0^{-1}\mathcal{S}_2v=-\sum_{\lambda\in k_F^{\times}}\begin{pmatrix}[\lambda]^{-1} & -1\\ 0 & [\lambda]\end{pmatrix}\begin{pmatrix}1 & 0\\ [\lambda]^{-1} & 1\end{pmatrix}\sum_{\mu\in k_F}\begin{pmatrix}1 & [\mu]\omega_F\\ 0 & 1\end{pmatrix}\alpha_0^{-1}v\\
            &=-\sum_{\lambda\in k_F^{\times}}\begin{pmatrix}[\lambda]^{-1} & -1\\ 0 & [\lambda]\end{pmatrix}\sum_{\mu\in k_F}\begin{pmatrix}1 & [\mu]\omega_F(1+[\lambda]^{-1}[\mu]\omega_F)^{-1}\\ 0 & 1\end{pmatrix}\begin{pmatrix}(1+[\lambda]^{-1}[\mu]\omega_F)^{-1} & 0\\ [\lambda]^{-1} & (1+[\lambda]^{-1}[\mu]\omega_F)\end{pmatrix}\alpha_0^{-1}v\\
            &=-\sum_{\lambda\in k_F^{\times}}\begin{pmatrix}[\lambda]^{-1} & -1\\ 0 & [\lambda]\end{pmatrix}\sum_{\mu\in k_F}\begin{pmatrix}1 & [\mu]\omega_F(1+[\lambda]^{-1}[\mu]\omega_F)^{-1}\\ 0 & 1\end{pmatrix}\alpha_0^{-1}v\\
            &=-\sum_{\lambda\in k_F^{\times}}\begin{pmatrix}[\lambda]^{-1} & -1\\ 0 & [\lambda]\end{pmatrix}\sum_{\mu\in k_F}\begin{pmatrix}1 & [\mu]\omega_F\\ 0 & 1\end{pmatrix}\begin{pmatrix}1 & \ast \omega_F^2\\ 0 & 1\end{pmatrix}\alpha_0^{-1}v\\
            &=-\sum_{\lambda\in k_F^{\times}}\begin{pmatrix}[\lambda]^{-1} & -1\\ 0 & [\lambda]\end{pmatrix}\sum_{\mu\in k_F}\begin{pmatrix}1 & [\mu]\omega_F\\ 0 & 1\end{pmatrix}\alpha_0^{-1}v\\
            &=-\sum_{\lambda\in k_F^{\times}}\begin{pmatrix}1 & -[\lambda]^{-1}\\ 0 & 1\end{pmatrix}\begin{pmatrix}[\lambda]^{-1} & 0\\ 0 & [\lambda]\end{pmatrix}\sum_{\mu\in k_F}\begin{pmatrix}1 & [\mu]\omega_F\\ 0 & 1\end{pmatrix}\alpha_0^{-1}v\\
            &=-\sum_{\lambda\in k_F^{\times}}\chi\begin{pmatrix}[\lambda]^{-1} & 0\\ 0 & [\lambda]\end{pmatrix}\begin{pmatrix}1 & -[\lambda]^{-1}\\ 0 & 1\end{pmatrix}\sum_{\mu\in k_F}\begin{pmatrix}1 & [\mu]\omega_F\\ 0 & 1\end{pmatrix}\alpha_0^{-1}v\\
            &=-\sum_{\lambda\in k_F^{\times}}\chi\begin{pmatrix}[\lambda] & 0\\ 0 & [\lambda^{-1}]\end{pmatrix}\begin{pmatrix}1 & -[\lambda]\\ 0 & 1\end{pmatrix}\sum_{\mu\in k_F}\begin{pmatrix}1 & [\mu]\omega_F\\ 0 & 1\end{pmatrix}\alpha_0^{-1}v
        \end{split}
    \end{equation*}
    Here, note that the 5-th and 7-th equalities follow as $v\in \pi^{I_S(1)}$, and the 9-th equality follows from the assumption that $I_S$ acts on $v$ by the character $\chi$. Also, $\ast\omega_F^2$ denotes an element of $\mathfrak{p}_F^2$. Therefore, we have $$\varphi(v_1)=-\sum_{\lambda\in k_F^{\times}}\chi\begin{pmatrix}[\lambda] & 0\\ 0 & [\lambda]^{-1}\end{pmatrix}\begin{pmatrix}1 & -[\lambda]\\ 0 & 1\end{pmatrix}\sum_{\mu\in k_F}\begin{pmatrix}1 & [\mu]\omega_F\\ 0 & 1\end{pmatrix}\alpha_0^{-1}\varphi(v).$$ Now, by the almost similar computation used the proof of the Claim in Theorem \ref{lifting B_S intertwiners from V_eta to G_S intertwiners from Ind_BS^GS}, we see that $\varphi(v_1)$ is fixed by $\bar{U}_S(\mathfrak{p}_F^{2k-1})$, as required. Also, note that $I_S$ acts on $v_1:=\mathcal{S}_2v$ by a character.

    Finally, consider the case when $\mathcal{S}_2v=0$. Consequently, we have 
    $$\sum_{\mu\in k_F}\begin{pmatrix}1 & [\mu]\omega_F\\ 0 & 1\end{pmatrix}\alpha_0^{-1}v=0.$$Therefore, we get 
    \begin{equation*}
        \begin{split}
            w_0v&=-\sum_{\mu\in k_F^{\times}}w_0\begin{pmatrix}1 & [\mu]\omega_F^{-1}\\ 0 & 1\end{pmatrix}v=-\sum_{\mu\in k_F^{\times}}\begin{pmatrix}[\mu]^{-1}\omega_F & -1\\ 0 &[\mu]\omega_F^{-1}\end{pmatrix}\begin{pmatrix}1 & 0\\ [\mu]^{-1}\omega_F & 1\end{pmatrix}v\\
            &=-\sum_{\mu\in k_F^{\times}}\begin{pmatrix}[\mu]^{-1}\omega_F & -1\\ 0 &[\mu]\omega_F^{-1}\end{pmatrix}v=-\sum_{\mu\in k_F^{\times}}\chi\begin{pmatrix}[\mu^{-1}] & 0\\ 0 & [\mu]\end{pmatrix}\begin{pmatrix}1 & -[\mu]^{-1}\omega_F\\ 0 & 1\end{pmatrix}\alpha_0^{-1}v\\
            &=-\sum_{\mu\in k_F^{\times}}\chi\begin{pmatrix}[\mu] & 0\\ 0 & [\mu^{-1}]\end{pmatrix}\begin{pmatrix}1 & -[\mu]\omega_F\\ 0 & 1\end{pmatrix}\alpha_0^{-1}v,
        \end{split}
    \end{equation*}
    whence, $$\varphi(w_0v)=-\sum_{\mu\in k_F^{\times}}\chi\begin{pmatrix}[\mu] & 0\\ 0 & [\mu^{-1}]\end{pmatrix}\begin{pmatrix}1 & -[\mu]\omega_F\\ 0 & 1\end{pmatrix}\alpha_0^{-1}\varphi(v).$$
    Now, for $b\in \mathfrak{p}_F^{2k-1}$, as in the proof of Theorem \ref{lifting B_S intertwiners from V_eta to G_S intertwiners from Ind_BS^GS}, we have 
    \begin{equation*}
        \begin{split}
            \begin{pmatrix}1 & 0\\ b & 1\end{pmatrix}\varphi(w_0v)&=-\sum_{\mu\in k_F^{\times}}\chi\begin{pmatrix}[\mu] & 0\\ 0 & [\mu^{-1}]\end{pmatrix}\begin{pmatrix}1 & -[\mu]\omega_F+\ast\omega_F^2\\ 0 & 1\end{pmatrix}\begin{pmatrix}(1+\ast\omega_F)^{-1} & 0\\ b & 1+\ast\omega_F\end{pmatrix}\alpha_0^{-1}\varphi(v)\\
            &=-\sum_{\mu\in k_F^{\times}}\chi\begin{pmatrix}[\mu] & 0\\ 0 & [\mu^{-1}]\end{pmatrix}\begin{pmatrix}1 & -[\mu]\omega_F+\ast\omega_F^2\\ 0 & 1\end{pmatrix}\alpha_0^{-1}\varphi(v)\\
            &=-\sum_{\mu\in k_F^{\times}}\chi\begin{pmatrix}[\mu] & 0\\ 0 & [\mu^{-1}]\end{pmatrix}\begin{pmatrix}1 & -[\mu]\omega_F\\ 0 & 1\end{pmatrix}\alpha_0^{-1}v=\varphi(w_0v).
        \end{split}
    \end{equation*}
     Then, for any $\lambda\in k_F$, we note that $\begin{pmatrix}1 & [\lambda]\\ 0 & 1\end{pmatrix}\varphi(w_0v)$ is fixed by $\bar{U}_S(\mathfrak{p}_F^{2k-1})$. This follows from the fact that $U_S(\mathfrak{p}_F)$ and $T_S(1+\mathfrak{p}_F)$ fixes $\varphi(w_0v)$, and from the above computation showing $\bar{U}_S(\mathfrak{p}_F^{2k-1})$ fixes $\varphi(w_0v)$. Setting $u_{\lambda}:=\begin{pmatrix}1 & [\lambda]\\ 0 & 1\end{pmatrix}$, we conclude that elements of the set $\{\varphi(u_{\lambda}w_0v)\,|\,\lambda\in k_F\}$ are fixed by $\bar{U}_S(\mathfrak{p}_F^{2k-1})$. 
     
     Now, we consider the situation when $\mathcal{S}_2v=0$ and $\sum_{\lambda\in k_F}u_{\lambda}w_0v\neq 0$. We take $v_1:=\sum_{\lambda\in k_F}u_{\lambda}w_0v$. Then, $v_1$ can be shown to be $I_S(1)$-invariant by using Iwahori decomposition and showing that it is fixed by the subgroups $U_S(\mathcal{O}_F),\,T_S(1+\mathfrak{p}_F),\,\text{and }\bar{U}_S(\mathfrak{p}_F)$. Also, $\varphi(v_1)=\sum_{\lambda\in k_F}\varphi(u_{\lambda}w_0v)$ is fixed by $\bar{U}_S(\mathfrak{p}_F^{2k-1})$. On the other hand, when $\mathcal{S}_2v=0$ and $\sum_{\lambda\in k_F}u_{\lambda}w_0v=0$, we have 
     \begin{equation*}
         \begin{split}
             v&=w_0\sum_{\lambda\in k_F^{\times}}\begin{pmatrix}1 & [\lambda]\\ 0& 1\end{pmatrix}w_0v=\sum_{\lambda\in k_F^{\times}}\begin{pmatrix}[\lambda]^{-1} & -1\\ 0 & [\lambda]\end{pmatrix}\begin{pmatrix}1 & 0\\ [\lambda]^{-1} & 1\end{pmatrix}w_0v\\
             &=\sum_{\lambda\in k_F^{\times}}\begin{pmatrix}[\lambda]^{-1} & -1\\ 0 & [\lambda]\end{pmatrix}w_0v=\sum_{\lambda\in k_F^{\times}}\chi\begin{pmatrix}[\lambda] & 0\\ 0 & [\lambda]^{-1}\end{pmatrix}\begin{pmatrix}1 & -[\lambda^{-1}]\\ 0& 1\end{pmatrix}w_0v\\
             &=\chi(-I_2)\sum_{\lambda\in k_F}\chi\begin{pmatrix}[\lambda^{-1}] & 0\\ 0 & [\lambda]\end{pmatrix}\begin{pmatrix}1 & [\lambda]\\ 0& 1\end{pmatrix}w_0v.
         \end{split}
     \end{equation*}
     In conclusion, we have $w_0v,\, v\in \mathrm{Span}\{u_{\lambda}w_0v\,|\,\lambda\in k_F^{\times}\}.$ Now, the $K_0$-representation $\bar{\mathbb{F}}_p[K_0]\cdot v$ is spanned by the set $\{v,u_{\lambda}w_0v\,|\,\lambda\in k_F\}$; this follows from the fact that by Bruhat decomposition we have $K_0=I_S\sqcup I_Sw_0I_S$, and $I_S$ acts on $v$ by a character and $I_S(1)$ stabilizes $\mathrm{Span}\{u_{\lambda}w_0v\,|\,\lambda\in k_F^{\times}\}$. This last fact follows from the same computations (done multiple times by now), which can be used to show that $\bar{U}_S(\mathfrak{p}_F)$ stabilizes $\mathrm{Span}\{u_{\lambda}w_0v\,|\,\lambda\in k_F^{\times}\}$. Therefore, as $\bar{\mathbb{F}}_p[K_0]\cdot v$ is a finite dimensional representation, we can choose a weight $\sigma$ inside it. We take $v_1\in \sigma$ to be the unique (up to scalar multiple) vector fixed by $I_S(1)$, on which $I_S$ acts by some character. Hence, $\varphi(v_1)\in \mathrm{Span}\{\varphi(u_{\lambda}w_0v)\,|\,\lambda\in k_F^{\times})\}$, and so the arguments of the previous paragraph show that $\varphi(v_1)$ is fixed by $\bar{U}_S(\mathfrak{p}_F^{2k-1})$. Therefore, we have shown that we can always find some non-zero $v_1\in \pi^{I_S(1)}$,  on which $I_S$ acts by some character, and such that $\varphi(v_1)$ is fixed by $\bar{U}_S(\mathfrak{p}_F^{2k-1})$.

     By repeating this process $k$ many times, we eventually find a non-zero $v_k\in \pi^{I_S(1)}$, on which $I_S$ acts by some character, and such that $\varphi(v_k)$ is fixed by $\bar{U}_S(\mathfrak{p}_F)$. Since, $\varphi(v_k)$ is fixed by $B_S\cap I_S(1)$, we conclude that $\varphi(v_k)$ is fixed by $I_S(1)=(B_S\cap I_S(1))\times \bar{U}_S(\mathfrak{p}_F)$.

     We finish the proof by applying Corollary \ref{I_S(1) invariants are killed by large powers of S} to the vector $v_k$. We obtain some $m\geq 1$ such that $\mathcal{S}^mv_k=0$ and $\mathcal{S}^{m-1}v_k\neq 0$. Setting $v^{\prime}:=\mathcal{S}^{m-1}v_k$, we have that $v^{\prime}$ is $I_S(1)$-invariant, and hence $\varphi(v^{\prime})=\mathcal{S}^{m-1}\varphi(v_k)$ is also $I_S(1)$-invariant. Now, by Lemma \ref{for Sv=0 action on v is controlled by B_S} the condition $\mathcal{S}v^{\prime}=0$ implies 
     \begin{equation*}
         \begin{split}
             w_0v^{\prime}=-\sum_{\lambda\in k_F^2\setminus \{0,0\}}\begin{pmatrix}\omega_F^{2}A(\lambda)^{-1} & -1\\ 0 & \omega_F^{-2}A(\lambda)\end{pmatrix}v^{\prime}.
         \end{split}
     \end{equation*}
     Therefore, as $\varphi$ is a $B_S$-intertwiner, we get 
     $$\varphi(w_0v^{\prime})=-\sum_{\lambda\in k_F^2\setminus \{0,0\}}\begin{pmatrix}\omega_F^{2}A(\lambda)^{-1} & -1\\ 0 & \omega_F^{-2}A(\lambda)\end{pmatrix}\varphi(v^{\prime}).$$ But, we also have $\varphi(\mathcal{S}v^{\prime})=\mathcal{S}\varphi(v^{\prime})=0$, whence, by Lemma \ref{for Sv=0 action on v is controlled by B_S}, we have $$w_0\varphi(v^{\prime})=-\sum_{\lambda\in k_F^2\setminus \{0,0\}}\begin{pmatrix}\omega_F^{2}A(\lambda)^{-1} & -1\\ 0 & \omega_F^{-2}A(\lambda)\end{pmatrix}\varphi(v^{\prime}).$$
     Now, since $G_S=B_SI_S(1)\sqcup B_Sw_0I_S(1)$, and also $\bar{\mathbb{F}}_p[G_S]\cdot v^{\prime}=\pi$, we deduce that $\varphi$ is a $G_S-$intertwiner.
\end{proof}

\section*{Acknowledgment}
The author would like to thank Dr. Peng Xu for several fruitful email correspondences, and also for the encouragement to write down the contents of Section \ref{Hecke algebras and eigenvalues}.

\end{document}